\documentclass[a4paper,11pt,leqno]{amsart}
\usepackage{a4wide,color,array}
\usepackage{cite}
\usepackage{hyperref}
\usepackage{amsmath,amssymb,amsthm,mathrsfs}
\hypersetup{linktocpage,colorlinks, citecolor={magenta}}
\usepackage[english]{babel}
\usepackage[utf8]{inputenc}

\usepackage{esint}

\numberwithin{equation}{section}

\usepackage{enumitem}
\setlist{nosep}
\setlist{noitemsep}

\newcommand{\R}{\mathbb{R}}

\newtheorem{theo}{Theorem}
\newtheorem{prop}{Proposition}[section]
\newtheorem{lem}[prop]{Lemma}
\newtheorem{coro}[prop]{Corollary}
\newtheorem{remark}[prop]{Remark}

\theoremstyle{plain}

\theoremstyle{definition}

\def \be{\begin{equation}}
\def \ee{\end{equation}}

\def \hal{\frac{1}{2}}
\def \div{\mathrm{div} \,}

\def \ep{\varepsilon}

\def\nab{\nabla}

\def\({\left(}
\def\){\right)}

\def \id{\mathrm{Id}}

\def \epsilon{\varepsilon}

\def\s{\mathsf{s}}

\begin{document}
\title[Hydrodynamics for hypersingular interactions]{Hydrodynamics for hypersingular interactions by $\Gamma$-convergence of gradient flows}
\author[C. McSwiggen]{Colin McSwiggen}
\address{Colin McSwiggen, Institute of Mathematics, Academia Sinica, 6F, Astronomy-Mathematics Building, No.~1, Sec.~4, Roosevelt Road, Taipei 106319, Taiwan}
\email{csm@as.edu.tw}
\author[S. Serfaty]{Sylvia Serfaty}
\address{Sylvia Serfaty,  Sorbonne Universit\'e, CNRS, Universit\'e Paris Cit\'e,
 Laboratoire Jacques-Louis Lions (LJLL), F-75005 Paris \\ \& Institut Universitaire de France \\ \&
 Courant Institute of Mathematical Sciences, New York University}
\email{serfaty@cims.nyu.edu}

\begin{abstract}
We consider the dynamics of $N$ particles with Riesz repulsive  interaction in one dimension, focusing in particular on the hypersingular case (i.e.\ inverse power with power larger than or equal to the dimension).  We prove convergence of the empirical measure to the solution of a porous medium equation by the method of $\Gamma$-convergence of gradient flows introduced by Sandier and the second author. This improves on the prior  convergence  result by Oelschl\"ager obtained under more stringent assumptions. The method also works for the case where the power is smaller than the dimension, as well as for a class of convex short-range interactions.
The proof highlights the crucial role played by the convexity of the interaction, which  allows one to show that no excess energy can arise along the flow, implying the  needed ``slope inequality''. \end{abstract}

\maketitle

\section{Introduction}
\subsection{Setting of the problem}
We wish to consider the following system of differential equations on $x_{N,k} \in \R$,
\begin{equation} \label{eqn:main}
   \begin{cases} &\displaystyle\frac{dx_{N,k}}{dt} =  \frac{\s}{N^\s} \sum_{\substack{j=1 \\ j \ne k}}^N \frac{1}{(x_{N,k} - x_{N,j})^{\s+1}}, \qquad k = 1, \hdots, N,\\
 &  x_{N,k}(0)= x_{N,k}^0 ,\end{cases}
\end{equation}
where $(x)^{\s}$ means $\mathrm{sgn}(x) |x|^{\s}$, so that the interaction is repulsive. 

We will drop the first index in the notation and  write $x_k$ instead of $x_{N,k}$, we will denote by $\mathbf{x}_N$ the vector $(x_1, \dots, x_N)\in \R^N$.

Equation \eqref{eqn:main} corresponds to the gradient flow, for the rescaled inner product introduced in Section~\ref{sec:gfaspect}, of the $\s$-Riesz energy 
\[\frac1{2N^{\s+1}}\sum_{i\neq j}\frac{1}{|x_i-x_j|^\s},\]
which decreases along the flow, hence particle collisions are impossible if the initial data have finite energy.

We assume the particles are initially ordered: 
 $x_{1}(0) < \hdots < x_{N}(0)$. They will remain ordered at larger times because of the impossibility of collisions.

We are interested in understanding the large $N$ limit of the empirical measure
\begin{equation} \label{eqn:emp-dist}
    \mu_N(t): = \frac{1}{N} \sum_{k=1}^N \delta_{x_k(t)}
\end{equation}
under the assumption that $\mu_N(0)$ converges in $2$-Wasserstein distance to a probability measure $\rho_0(x)  \, dx$, where $\rho_0$ is a suitably nice density and $dx$ indicates the Lebesgue measure.

The  case $\s<d$, where $d$ is the dimension, corresponds to the {\it potential case}, which is long-range (the interaction decays slowly). In such a case, the limiting dynamics is a {\it mean-field limit}: each particle moves according to the {\it mean-field} generated by all the other particles, including the far field. The convergence in such a setting can be proven for all $\s<d$ by the modulated  energy method  \cite{Serfaty,Duerinckx,NRS2021,RosenzweigSerfaty,RS2cpam,HCRS1,hCRSx} (the modulated free energy method also allows one to treat the case with added noise \cite{BJW2020,CdCRS1,CdCRS2}), see also the dual cumulant method of \cite{BDJ2024}.

In contrast, when $\s\ge d$ the interaction is short-range, and the dominant force acting on each particle comes from the near particles. This leads to a {\it hydrodynamic limit} instead of a mean-field limit, and the convergence is significantly harder to prove, because the {\it microscopic arrangement} of the particles, and not just their density, influences the dynamics. The limiting evolution PDE is also very different from the potential regime, it is a local equation instead of a nonlocal one.
As a result, the question of proving the convergence remains open in dimension $d \ge 2$. 
In dimension 1, the only available proof is that of Oelschl\"ager \cite{KO90} for $\s\ge2$, which proceeds by weak convergence and makes very strong assumptions on the initial data. More precisely, \cite{KO90} requires  that the initial configuration is symmetric with respect to the origin, 
that the gap between neighboring particles is decreasing in the particle index and  bounded above and below by a positive constant times $1/N$. These guarantee that ``local equilibrium''---i.e.\ that the microscopic arrangement of the particles alluded to above is nearly optimal in energy---remains preserved along the flow; in particular, the density of particles remains bounded, and the limit is obtained as the unique solution of the limit equation in $L^1\cap L^\infty(\R\times[0,T])$.
 \smallskip
 
We revisit this question here, still in dimension 1,  by another approach, based on applying the Sandier-Serfaty scheme of $\Gamma$-convergence of gradient flows \cite{SS,ser}. As we shall see, that method provides a very simple proof, and (still in dimension 1) it also allows one to prove the mean-field convergence in the regime $\s<1$ (in that regime we only  sketch the proof in Section~\ref{sec:generalizations} since other proofs, in particular via the modulated energy method alluded to above, are available anyway).

In all these cases,  dimension 1 is crucial as it provides a convexity of the interaction that is missing in higher dimensions, and also ensures that particles remain ordered. It is also crucial that the limit local energy of particles is trivially identified in dimension 1, contrarily to higher dimensions where the minimal local energy is in general not identified.

Within the range $ \s \ge 1$, we  need to distinguish the cases $\s>1$, and $\s=1$.  
The case $\s>1$ is the truly hypersingular regime, in which each particle essentially only feels the force generated by the  nearby particles, i.e.\ only those at the  local scale  $O(1/N)$  near it matter.  In contrast, in the potential case $\s<1$ it is the far field that matters. The case $\s=1$ is borderline: in that case, interactions at all scales weigh equally in the energy. These facts are well known at the level of the energy analysis, see for instance the book \cite{BorodachovHardinSaff2019}.

As a result, the normalization needed to obtain a convergent dynamics differs in the two regimes $\s>1$ and $\s=1$.
In \cite{KO90} it is shown that  for $\s\ge2$, under certain assumptions on  $\mathbf{x}_N(0)$, the empirical measures associated to \eqref{eqn:main}  converge weakly in the sense of probability measures to $\rho(x,t) \, dx$, where $\rho(x,t)$ satisfies, up to the constant factor $\zeta(\s)=\sum_{j\ge1}j^{-\s}$ in front of the right-hand side, the porous medium equation:
\begin{equation} \label{eqn:rho-PME}
\frac{\partial \rho}{\partial t} = \s  \frac{\partial^2}{\partial x^2} (\rho^{\s+1})
\end{equation}
The range $1<\s<2$ is not covered by the assumptions of \cite{KO90}, although it may be possible to extend his method to cover this range as well; the case $\s=1$, whose treatment is new, requires instead the time-rescaled dynamics 
\[\frac{dx_{k}}{dt} =  \frac{\s}{N^\s \log N} \sum_{\substack{j=1 \\ j \ne k}}^N \frac{1}{(x_{k} - x_{j})^{\s+1}}, \qquad k = 1, \hdots, N\]
to obtain convergence to the same equation \eqref{eqn:rho-PME}.

In order to have a unified treatment we will instead consider the flow 
\begin{equation}
\label{maineq}
\frac{dx_{k}}{dt} =  \frac{\s}{N^\s Z_N(\s)} \sum_{\substack{j=1 \\ j \ne k}}^N \frac{1}{(x_{k} - x_{j})^{\s+1}}, \qquad k = 1, \hdots, N
\end{equation}
where $Z_N(\s)$ is defined by 
\be \label{defZN}
Z_N(\s)= \sum_{j=1}^{N-1} j^{-\s},\ee which we note converges to Riemann zeta function $\zeta(\s)$ when $\s>1$ and is equivalent to $\log N$ as $N \to \infty$ when $\s=1$.

With this time-rescaling, the empirical measures of \eqref{maineq} converge exactly to solutions of \eqref{eqn:rho-PME}, as we will see in  Theorem~\ref{thm:main}.

\subsection{Gradient flow aspect}\label{sec:gfaspect}
The dynamics \eqref{maineq} is the gradient flow of the energy  $F_N$ defined over $\R^N$ by
\be\label{defF_N}
F_N(\mathbf{x}_N)=
 \frac{1}{2 N^{\s+1} Z_N(\s)} \sum_{i\neq j} \frac{1}{|x_i-x_j|^\s} . \ee
 More precisely, following the notation in \cite{SS} we introduce the rescaled Euclidean metric on $\R^N$
 \be \label{metricXN} 
  \|\mathbf{x}_N\|_{X_N} :=  \(\frac1N\sum_{i=1}^N x_i^2\)^{\hal}\ee
  and $\langle \cdot, \cdot \rangle_{X_N}$ its associated inner product.
 
Then  \eqref{maineq} can be rewritten as 
\be \label{gf1} \partial_t \mathbf{x}_N = - \nab_{X_N}  F_N(\mathbf{x}_N)\ee
where the gradient is computed with respect to the $X_N$ inner product, i.e.\ such that for every variation $\mathbf{x}_N(t)$ we have 
\[\frac{d}{dt}\Big|_{t=0} F_N(\mathbf{x}_N(t))= \langle  \nab_{X_N} F_N(\mathbf{x}_N(0)), \dot{\mathbf{x}}_N(0)\rangle_{X_N},\]
where $\dot{}$ denotes the time-derivative.

Of course by definition of a gradient, one always has by Cauchy-Schwarz
\be \frac{d}{dt}F_N(\mathbf{x}_N(t)) =  \langle \dot{ \mathbf{x}}_N(t), \nab_{X_N} F_N(\mathbf{x}_N(t))\rangle_{X_N} \ge - \hal \|\dot{  \mathbf{x}}_N(t)\|_{X_N}^2- \hal \|\nab_{X_N} F_N(\mathbf{x}_N(t))\|_{X_N}^2\ee
and the gradient flow is uniquely characterized by the fact that the reverse inequality 
\be \label{CSineq}\frac{d}{dt}F_N(\mathbf{x}_N(t))\le - \hal \|\dot { \mathbf{x}}_N(t)\|^2_{X_N}- \hal \|\nab_{X_N} F_N(\mathbf{x}_N(t))\|_{X_N}^2\ee holds.

The limit equation \eqref{eqn:rho-PME} is a gradient flow, this time of the energy 
\be \label{calF} \mathcal F(\mu) :=\begin{cases}\displaystyle\int_{\R}\rho^{\s+1} \, dx & \text{if }\mu=\rho\,dx\text{ and }\rho\in L^{\s+1}(\R),\\[2mm] +\infty & \text{otherwise},\end{cases}\ee
defined on $\mathcal{P}_2(\R)$ (the space of probability measures on $\R$ with finite second moment)
and for the $W_2$ Wasserstein metric.  Here, we recall that for $1 \le q \le \infty$, the Wasserstein-$q$ distance between two probability measures on $\R$ with finite $q$th moments is just the $L^q$ distance between their quantile functions (see for instance \cite{Villani}), where the quantile function $Q : (0,1) \to \R \cup \{\pm \infty \}$ of a probability measure $\mu$ on $\R$  is defined by
\[
Q(y) = \inf \big\{ \, x \in \R \mid \mu\big((-\infty,x]\big) \ge y \, \big\}, \qquad y \in (0,1).
\]

Throughout, we identify an absolutely continuous measure with its density and write indifferently $\mathcal F(\mu)$ or $\mathcal F(\rho)$ when $\mu=\rho\,dx$.

That \eqref{eqn:rho-PME} is the $W_2$ gradient flow of \eqref{calF} follows from   Otto calculus \cite{Otto} that formally defined gradient flows on (infinite dimensional) Riemannian manifolds. This framework was then made rigorous in  \cite{AGS}. It allows one to view such a flow as a special instance of a gradient flow on a metric space, with distance $W_2$, that guarantees its existence and uniqueness.  We will  recall it and collect useful facts about it  in Section~\ref{sec:limitflow}.

In that framework, the metric derivative $|\dot{\mu}|$ of an absolutely continuous curve $\mu(t)$ on $\mathcal{P}_2(\R)$ is defined a.e. in $t$ by 
\[|\dot{\mu}|(t):= \lim_{s\to t} \frac{W_2(\mu(s), \mu(t))}{|s-t|}.\]
Moreover, the {\it local slope} (we will just call it the slope) of $\mathcal F$ at $\mu$ is defined, for $\mu$ with $\mathcal F(\mu)<\infty$, by 
\[|\partial \mathcal F|(\mu):= \limsup_{\nu \to \mu}\frac{(\mathcal{F}(\mu)-\mathcal{F}(\nu))_+}{W_2(\mu, \nu)}\] where the convergence of $\nu $ to $\mu$ is in the $W_2$ distance.

The natural analogue of the notion of gradient
flow or steepest descent curve is that of curve of maximal slope $\mu(t)$ on $(a,b)$, which is defined by the fact that $\varphi(t):= \mathcal{F} (\mu(t))$ is a.e. nonincreasing and satisfying 
\be \label{112}\dot{\varphi}(t)\le - \hal \(|\dot{\mu}|(t)\)^2 - \hal\( |\partial \mathcal F|(\mu(t))\)^2\quad a.e. \ t\in (a,b),\ee
a natural generalization of \eqref{CSineq} which however does not require any inner product to be defined, but just the notions of slope and metric derivative.

\subsection{The scheme}\label{sec:scheme}
We now recall the basics of the Sandier-Serfaty scheme taken from \cite{SS}, using  in particular the adaptation to general metric spaces  of \cite{ser} which allows one to treat the case of gradient flows in the $2$-Wasserstein space.

First we define a sense of convergence denoted $S$, that allows one to connect the space $\R^N$ to the space $\mathcal{P}_2(\R)$.
 Define the embedding  $\mathbf{i}_N$ that to each configuration assigns its empirical measure, i.e.\ 
 \be\mathbf{i}_N (\mathbf{x}_N) = \frac{1}{N}\sum_{i=1}^N \delta_{x_i}.\ee We say that 
 \be \mathbf{x}_N \overset{S}{\rightharpoonup} \mu\quad \text{if } \ W_2(\mathbf{i}_N(\mathbf{x}_N) , \mu) \to 0.\ee

 We  say that $F_N$ $\Gamma$-converges to $\mathcal F$ if
 \begin{enumerate}
 \item ($\Gamma$-liminf) 
 If $ \mathbf{x}_N \overset{S}{\rightharpoonup} \mu$, then  \[\liminf_{N\to \infty} F_N(\mathbf{x}_N) \ge \mathcal F (\mu)\]
 \item ($\Gamma$-limsup) 
Given $\mu \in \mathcal P_2(\R)$, there exists $\mathbf{x}_N \in \R^N$ such that $ \mathbf{x}_N \overset{S}{\rightharpoonup} \mu$ and \[\limsup_{N\to \infty} F_N(\mathbf{x}_N) \le \mathcal F (\mu)\]
\end{enumerate}
It is known that $F_N$ $\Gamma$-converges to $\mathcal F$ in the case $\s \ge 1$ (in dimension 1) that we consider, this is essentially an adaptation of results found in \cite{HardinSaff2005,BorodachovHardinSaff2008,BorodachovHardinSaff2019,HardinSaffVlasiuk2020,BouchitteMahadevan2024}, which we will review in Section \ref{sec:Gamma}.

We will say that {\it there is no excess energy} if 
\[\lim_{N\to \infty} F_N(\mathbf{x}_N) = \mathcal F(\mu), \quad  \mathbf{x}_N \overset{S}{\rightharpoonup} \mu.\]
As shown in \cite{ser}, the Sandier-Serfaty scheme asserts\footnote{It even provides weaker sufficient forms of the assumptions, which we won't need here.} that if we 
 can show that 
\begin{enumerate}
\item\label{it:schemeGamma}($\Gamma$-convergence)  $F_N$ $\Gamma$-converges to $\mathcal F$ for the $S$ convergence
\item\label{it:schemespeed}(lower semi-continuity of the speed) If $\mathbf{x}_N (t) \overset{S}{\rightharpoonup} \mu(t)$ for $t \in (0,T)$ then 
\[\liminf_{N\to \infty} \int_0^s\|\dot{\mathbf{x}}_N(t)\|_{X_N}^2 dt \ge \int_0^s |\dot{\mu}|(t)^2 dt\quad \forall s\in [0,T)\]
\item\label{it:schemeslope}(slope inequality) If $\mathbf{x}_N \overset{S}{\rightharpoonup} \mu$ then 
 \[\liminf_{N\to \infty} \|\nab_{X_N} F_N(\mathbf{x}_N)\|_{X_N} \ge  |\partial \mathcal F|(\mu).\] 
\end{enumerate}
then solutions to  \eqref{gf1} which have no initial excess energy  have empirical measures which converge to solutions to \eqref{gf2} for the $W_2$ distance, which is the desired result; the existence of a limit curve $\mu(t)$ is obtained separately, by compactness (see Steps~1--2 of Section~\ref{sec:conclusion}).
Let us recall the argument since it is very short. It suffices to write that 
\begin{align*}
F_N(\mathbf{x}_N(0))-F_N(\mathbf{x}_N(t))& = \hal \int_0^t \|\dot {\mathbf{x}}_{N}(\tau)\|_{X_N}^2 + \|\nab_{X_N} F_N(\mathbf{x}_{N}(\tau))\|_{X_N}^2\,d\tau
\\ & \ge \hal \int_0^t |\dot{\mu}|(\tau)^2 + |\partial \mathcal{F}|(\mu(\tau))^2\,d\tau +o(1)\\
& \ge \int_0^t |\dot{\mu}|(\tau) |\partial \mathcal{F}|(\mu(\tau))\,d\tau +o(1)\\
& \ge \mathcal{F}(\mu(0))-\mathcal{F}(\mu(t))+o(1),\end{align*}
where we use successively that $\mathbf{x}_N(t)$ is solution to \eqref{gf1}, the lower semi-continuity of the speed and slope inequality (together with Fatou's lemma; here $o(1)$ denotes a quantity whose $\liminf$ as $N\to\infty$ is nonnegative), Young's inequality, and the definition of strong upper gradient (see Proposition \ref{prop:AGSfacts} \ref{it:uppergradient}). Since we assume no initial excess energy, we have $\lim F_N(\mathbf{x}_N(0))= \mathcal{F}(\mu(0))$, moreover, by $\Gamma$-liminf, we have $\liminf F_N(\mathbf{x}_N(t))\ge \mathcal{F}(\mu(t))$. Inserting this into the above sequence of displays implies that all the inequalities must be equalities, in particular, after letting $N \to \infty$, we obtain that 
\[\hal\int_0^t \big(|\dot{\mu}|(\tau)^2+|\partial \mathcal{F}|(\mu(\tau))^2\big)\,d\tau = \mathcal{F}(\mu(0))-\mathcal{F}(\mu(t)),\] which, together with the strong upper gradient property, gives a  time-integrated version of \eqref{112} and  implies that $\mu(t) $ is a gradient flow curve for  $\mathcal{F}$.
More details are provided in the proof of the main theorem.

\subsection{Statement of the main result}
Our hypotheses on the initial data are:
\begin{itemize}[leftmargin=3.4em]
\item[\textup{(H1)}] \emph{convergence}: $\rho_0\in\mathcal P_2(\R)$ is a probability density with $\mathcal F(\rho_0)<\infty$, and $W_2\bigl(\mu_N(0),\rho_0\bigr)\to0$;
\item[\textup{(H2)}] \emph{well-preparedness}: $F_N\bigl(\mathbf{x}_N(0)\bigr)\to\mathcal F(\rho_0)$, with $F_N$ as in \eqref{defF_N} and $\mathcal F$ as in \eqref{calF}.
\end{itemize}
Given \textup{(H1)}, \textup{(H2)} is equivalent to  $\limsup F_N(\mathbf{x}_N(0))\le\mathcal F(\rho_0)$, the lower half being automatic by $\Gamma$-convergence shown below  in Proposition~\ref{prop:gammaliminf}. Data realizing both hypotheses exist for every $\rho_0$ as in \textup{(H1)}, by the $\Gamma$-limsup result seen in Section \ref{sec:Gamma}. Let us emphasize again  that in contrast to \cite{KO90}, no condition  is imposed on the gaps between particles nor on initial balance of the particles position, and the initial density need not be bounded.

\begin{theo}\label{thm:main}
Assume \textup{(H1)--(H2)} and fix $T>0$ and $\s\ge1$. Let $\rho$ be the unique solution of \eqref{eqn:rho-PME} with datum $\rho_0$, equivalently the $W_2$ gradient flow of $\mathcal F$. Then, as $N \to \infty$,
\begin{enumerate}[label=\textup{(\roman*)},leftmargin=2.6em]
\item($W_2$ convergence)\label{it:conv} $\displaystyle\sup_{t\in[0,T]}W_2\bigl(\mathbf{i}_N(\mathbf{x}_N(t)),\rho(t)\,dx\bigr)\longrightarrow0$;
\item(convergence of the energy)\label{it:energy} $F_N\bigl(\mathbf{x}_N(t)\bigr)\to\mathcal F\big(\rho(t)\big)=\int_\R\rho(t)^{\s+1}dx$ for every $t\in[0,T]$;
\item(convergence of the dissipation)\label{it:dissip} $\displaystyle\int_0^T\|\nab_{X_N} F_N(\mathbf{x}_N(t))\|_{X_N}^2 dt\longrightarrow\int_0^T|\partial \mathcal F|^2\bigl(\rho(t)\bigr)dt$.
\end{enumerate}

\end{theo}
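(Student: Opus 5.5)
The plan is to run the Sandier--Serfaty scheme of Section~\ref{sec:scheme}. The three ingredients are the $\Gamma$-convergence of Section~\ref{sec:Gamma} (Proposition~\ref{prop:gammaliminf} together with the $\Gamma$-limsup), the lower semicontinuity of the speed, and the slope inequality. Compactness supplies a limit curve. Uniqueness of the $W_2$ gradient flow of $\mathcal F$ (Proposition~\ref{prop:AGSfacts}) then identifies the limit and upgrades subsequential convergence to convergence of the whole sequence.

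\textbf{Compactness and speed.} By \eqref{CSineq} and (H2),
\[
\int_0^T \|\dot{\mathbf{x}}_N\|_{X_N}^2\,dt \le 2F_N(\mathbf{x}_N(0)) \le C.
\]
Since the particles stay ordered, the monotone coupling shows that $W_2(\mathbf{i}_N(\mathbf{x}_N(s)),\mathbf{i}_N(\mathbf{x}_N(t)))$ is exactly $\|\mathbf{x}_N(s)-\mathbf{x}_N(t)\|_{X_N}$, which is at most $C|t-s|^{1/2}$. Second moments stay bounded on $[0,T]$, because $\|\mathbf{x}_N(t)\|_{X_N}\le \|\mathbf{x}_N(0)\|_{X_N}+C\sqrt T$. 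Bounded second moments give tightness, so for each $t$ the measures lie in a set that is relatively compact for $W_2$ after a small refinement using the equi-Hölder bound.

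An Arzel\`a--Ascoli argument then yields a subsequence with $\sup_t W_2(\mathbf{i}_N(\mathbf{x}_N(t)),\mu(t))\to0$. For this to be uniform $W_2$ convergence rather than only weak convergence, I need uniform integrability of second moments. To get it, I would use that $\partial_t\frac1N\sum x_k^2$ is controlled by the energy, or simply argue with the quantile function $Q_N(t)$ in $L^2(0,1)$.

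Lower semicontinuity of the speed is standard. For the step functions $Q_N(t)$,
\[
\int_0^s\|\dot{\mathbf{x}}_N\|^2 = \int_0^s\|\partial_tQ_N\|_{L^2}^2,
\]
and weak $L^2$ lower semicontinuity gives $\int_0^s|\dot\mu|^2$, since $W_2$ is the $L^2$ distance of quantiles.

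\textbf{Slope inequality (main obstacle).} This is where I expect the real difficulty. In quantile variables $Q$, $\mathcal F(\rho)=\int_0^1 (Q')^{-\s}\,dy$, which is convex along generalized geodesics in 1D. Likewise $F_N$ is a convex function of $\mathbf{x}_N$ on the ordered cone, since $|x|^{-\s}$ is convex on $(0,\infty)$. The plan is to use convexity to get the slope from a one-sided comparison. For any competitor $\nu$ with $\mathcal F(\nu)<\infty$, approximate $\nu$ by recovery configurations $\mathbf{y}_N$ from the $\Gamma$-limsup. Convexity of $F_N$ gives
\[
F_N(\mathbf{y}_N)\ge F_N(\mathbf{x}_N)+\langle\nab_{X_N}F_N(\mathbf{x}_N),\mathbf{y}_N-\mathbf{x}_N\rangle_{X_N}.
\]
Here the $X_N$ distance $\|\mathbf{y}_N-\mathbf{x}_N\|_{X_N}$ equals $W_2$ of the empirical measures, by the ordering. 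Hence
\[
F_N(\mathbf{x}_N)-F_N(\mathbf{y}_N)\le \|\nab F_N(\mathbf{x}_N)\|\,W_2(\mathbf{i}_N\mathbf{x}_N,\mathbf{i}_N\mathbf{y}_N).
\]
Passing to the limit requires $\liminf F_N(\mathbf{x}_N)\ge\mathcal F(\mu)$, which is the $\Gamma$-liminf, and $\limsup F_N(\mathbf{y}_N)\le \mathcal F(\nu)$, which is the recovery property. This gives
\[
\mathcal F(\mu)-\mathcal F(\nu)\le \liminf\|\nab F_N(\mathbf{x}_N)\|\,W_2(\mu,\nu).
\]
For a geodesically convex functional the slope equals $\sup_{\nu\neq\mu}(\mathcal F(\mu)-\mathcal F(\nu))_+/W_2(\mu,\nu)$, so the slope inequality follows. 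The subtle point is that one inequality needs the liminf at $\mathbf{x}_N$ and the other the limsup at $\mathbf{y}_N$; the directions of the two inequalities match, so no excess energy along the flow is needed at this stage. Checking that the recovery sequences are ordered and have controlled energy for arbitrary finite-energy $\nu$ is the technical part, and I would use the quantile construction $y_k=Q_\nu(k/(N+1))$.

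\textbf{Conclusion.} Plug the three ingredients into the chain of inequalities of Section~\ref{sec:scheme} on $[0,t]$. Together with (H2), the $\Gamma$-liminf at time $t$ and Proposition~\ref{prop:AGSfacts}\ref{it:uppergradient}, this forces all inequalities to be equalities. Consequently $\mu$ is a curve of maximal slope, hence $\mu(t)=\rho(t)\,dx$, and uniqueness gives convergence of the full sequence, which is (i). Equality in the chain also yields $\lim F_N(\mathbf{x}_N(t))=\mathcal F(\rho(t))$ for each $t$, which is (ii). It further gives convergence of $\int_0^T(\|\dot{\mathbf{x}}_N\|^2+\|\nab F_N\|^2)$. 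Combined with the separate liminf inequalities for each term, both terms converge, which gives (iii).
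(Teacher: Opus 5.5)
\textbf{Gap: the compactness step does not give $W_2$ convergence, and your slope argument depends on it.}

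Your slope argument is valid under $S$-convergence, and it is a genuinely different route from the paper's. You use convexity of $F_N$ on $\mathcal C_N$ against the recovery configuration of an arbitrary competitor $\nu$ (this is Proposition~\ref{prop:noexcess} applied with $\nu$ in place of $\mu$). Then you combine the $\Gamma$-liminf at $\mathbf x_N$ with the identity between local and global slope for a convex functional. This gives the slope inequality with no no-excess-energy input. The paper instead first proves no excess energy, and then uses the weighted $\Gamma$-liminf of Proposition~\ref{prop:gammaliminf} together with the dual formula \eqref{slopedual}.

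However, your argument needs $\limsup_N W_2(\mathbf i_N(\mathbf x_N),\mathbf i_N(\mathbf y_N))\le W_2(\mu,\nu)$, that is, $W_2$ convergence of $\mathbf i_N(\mathbf x_N(t))$. Your compactness step does not deliver this. A uniform second-moment bound plus equi-H\"older continuity in time gives compactness only for $W_1$ (the narrow topology). For example, $(1-\frac1n)\delta_0+\frac1n\delta_{\sqrt{K_2 n}}$ has second moment $K_2$ and converges narrowly to $\delta_0$, but not in $W_2$.

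The remedies you suggest do not close this:
\begin{itemize}
\item $L^2(0,1)$-boundedness of the quantiles $Q_N(t)$ gives only weak compactness, or strong compactness in $L^p$ for $p<2$.
\item The identity $\frac{d}{dt}\frac1N\sum x_k^2=2\s F_N(\mathbf x_N)$ turns into convergence of second moments only once you know $\int_0^t F_N(\mathbf x_N)\to\int_0^t\mathcal F(\mu)$. That is the absence of excess energy, which in your scheme is an output of the identification. So this route is circular.
\end{itemize}
If a vanishing fraction of particles carries a fixed amount of second moment to infinity, $\|\mathbf x_N-\mathbf y_N\|_{X_N}^2$ picks up that extra amount, and your inequality no longer controls the slope. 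For the same reason, (i) does not follow from uniqueness alone: you only get uniform $W_1$ convergence.

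\textbf{The missing ingredient is the truncation of Proposition~\ref{prop:noexcessR}.} Particles outside a window $[-R,R]$ are left in place and merely spread to gaps $\frac1N$. They then cost $O(K_2/R^2)$ in energy and $O(\sqrt{K_2}/R)$ in distance, however far out they are. The bulk is compared with a measure conditioned to a compact set, on which narrow convergence implies $W_2$ convergence.

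The paper applies this with $\nu=\mu(t)$. That yields $L^1(0,T)$ convergence of the energies from $W_1$ compactness alone. It then uses a slope inequality requiring only narrow convergence plus convergence of energies. Finally it upgrades to $W_2$ at the very end, via the second-moment identity for the particles together with \eqref{secondmomentPME} for the porous medium equation.

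Applying Proposition~\ref{prop:noexcessR} with your arbitrary competitor $\nu$ in place of $\mu$, then letting $N\to\infty$ and then $R\to\infty$, would repair your slope argument under narrow convergence with bounded second moments. Your conclusion would then go through, except that (i) must still be upgraded from $W_1$ to $W_2$ as in Step~7 of the paper.
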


Among the three ingredients of the scheme of Section \ref{sec:scheme}, the $\Gamma$-convergence (\ref{it:schemeGamma}) and the lower semicontinuity of the speed (\ref{it:schemespeed}) are standard. The main difficulty or novelty here lies in proving that the slope inequality (\ref{it:schemeslope}) holds. It will be proven by first leveraging on convexity to  show that no excess energy can occur unless configurations have unbounded slope, and then by a duality argument. Let us emphasize here again that this argument is 
very one-dimensional, because $F_N$ is nonconvex in higher dimension. 

Another famous example of convergent gradient flow for $\Gamma$-converging energy is the convergence of the parabolic Allen-Cahn equation, gradient flow of the Allen-Cahn energy functional (which itself $\Gamma$-converges to the perimeter functional)  to the Brakke flow, see for instance \cite{sato}. The convergence of that  flow is much more delicate to study, first because the initial dynamics is not finite-dimensional, but mostly because the energy is {\it nonconvex}. It is known that in that setting, excess energy can arise, even for configurations of zero slope, i.e.\ energy critical points \cite{hutchinsontonegawa,delpino}, or along the flow \cite{ilmanen}, even though the slope inequality is proven in \cite{rogerschatzle}.  The same question of excess energy arises in the question of convergence of  solutions of the Cahn-Hilliard equation to the Mullins-Sekerka flow \cite{Le2,Le1} or convergence of critical points of Ambrosio-Tortorelli functionals \cite{FLS,BBR,BRR}. We refer to \cite{ser} for more discussion on  the $\Gamma$-convergence of gradient flow point of view on both Allen-Cahn and Cahn-Hilliard.

In view of these examples, an interesting open question is whether the formation of excess energy can always be ruled out if the converging energies are convex, and, in the nonconvex case, which criteria rule it out.

\subsection{Other related works}

Close to the present work is \cite{HSST2022}, which studies the same dynamics \eqref{maineq}, posed on a closed curve, but asks a different question: the large time behaviour at fixed $N$, showing that the energy comes within $\epsilon$ of its minimal value after a time $\mathcal{O}(1/\epsilon)$, uniformly in $N$. The porous medium equation enters there only as a source of intuition---the maximum principle for $\sup_x\rho$ and the lower bound on the dissipation rate being replaced by discrete analogues.

The Sandier-Serfaty scheme recalled in Section \ref{sec:scheme} has also been used to derive the porous medium equation, but from energies of a different nature. In the blob method of \cite{CarrilloCraigPatacchini} (see also \cite{CarrilloEspositoWu2023}), the internal energy is regularized by mollification into an energy  that is finite  on empirical measures, and  each mollifier overlaps a diverging number of particles. 

It has also been used in \cite{CPSW2016}, still in one dimension. There too the discrete energy is built rather than given: the empirical measure is replaced by a density which is constant on nonoverlapping balls centred at the particles, so that the resulting energy only couples consecutive particles.

Finally, variational approaches have been used to prove the convergence of other discrete one-dimensional particle systems with singular interactions, in the context of pile-ups of walls of edge dislocations \cite{vMM2014}; note that \cite{vM2018,GvMPS2020,vMPP2022} also consider nonconvex situations and situations with particles of opposite signs which may collide.
\subsection{Generalizations}\label{sec:generalizations}
\subsubsection{The potential case}
The potential case $\s<1 $  can be handled in the same way. We do not emphasize the details because the result is even known in higher dimension, see \cite{Serfaty,Duerinckx,RosenzweigSerfaty} and references therein.
The limit is a mean-field limit: the dynamics should be rescaled by 
\begin{equation} \label{eqn:main2}
   \begin{cases} &\displaystyle\frac{dx_{k}}{dt} =  \frac{\s}{N} \sum_{\substack{j=1 \\ j \ne k}}^N \frac{1}{(x_{k} - x_{j})^{\s+1}}, \qquad k = 1, \hdots, N,\\
 &  x_{k}(0)= x_{k}^0 .\end{cases}
\end{equation}
This is the gradient flow in the $X_N$ metric for the energy 
\be F_N(\mathbf{x}_N) = \frac{1}{2N^2} \sum_{i\neq j} \frac{1}{|x_i-x_j|^{\s}},\ee
which $\Gamma$-converges to 
\[\mathcal{F}(\mu)=\frac12\iint_{\R\times \R} \frac{1}{|x-y|^\s} d\mu(x)d\mu(y).\]
The limit equation is nonlocal in the density
\be 
\partial_t \mu = -\div ( (K* \mu) \mu)\qquad K(x)=\frac{\s}{x^{\s+1}},
\ee
where, since $K$ is not locally integrable, $K*\mu$ is understood as $-\partial_x(|x|^{-\s}*\mu)$.
The proof can be obtained in the same way as the case treated here.  Convexity is used in the same way to deduce no excess energy.
Then, the $\Gamma$-convergence that we need to establish is instead 
\[\liminf_{N\to \infty} \frac{1}{N^2} \sum_{i\neq j}\frac{(\psi(x_i)-\psi(x_j)) (x_i-x_j)}{|x_i-x_j|^{\s+2}}\ge  \iint_{\R\times \R} 
\frac{(\psi(x)-\psi(y))(x-y)}{|x-y|^{\s+2}} d\mu(x) d\mu(y)\] for increasing functions $\psi$, 
which can be obtained by standard arguments, see for instance \cite[Sec.  3.1]{serfatylecturenotes}.
Following the same route as the proof of Proposition \ref{prop:slope} this allows one to prove the slope inequality
\be \liminf_{N\to\infty} \|\nab_{X_N} F_N(\mathbf{x}_N)\|_{X_N} \ge |\partial \mathcal F|(\mu)\ee and the conclusion follows as in Section \ref{sec:conclusion}, using the analogues for the interaction energy of the facts collected in Proposition~\ref{prop:AGSfacts}. Details are left to the reader.

\subsubsection{More general short range interactions}
In \cite{KO90} Oelschl\"ager considers the more general setup 
\begin{equation} \label{eqn:main-1st}
    \frac{dx_{k}}{dt} = - N \sum_{\substack{j=1 \\ j \ne k}}^N V'\big(N[x_{k}(t) - x_{j}(t)]\big), \qquad k = 1, \hdots, N,
\end{equation}
where $x_{k}(t) \in \R$  and $V$ is a symmetric potential that is well behaved. 
The limit is the nonlinear diffusion \be\partial_t\rho=\partial_x^2P(\rho), \qquad P(\rho)=-\sum_{j\ge1}jV'(j/\rho).\ee
Our method extends to that setting under natural assumptions on $V$: even, $C^1$, nonnegative, convex and nonincreasing on $(0,\infty)$, with $\sum_j V(j)<\infty$, doubling near $0$, and strictly convex. All the steps of the proof of the Riesz case go through, up to minor modifications, with one main exception: the homogeneity of the Riesz kernel identifies the virial $-\sum_{i\ne j}(x_i-x_j)V'(N(x_i-x_j))$ with a multiple of the energy, and for a general $V$ this has to be replaced by a lemma showing that convergence of the energy forces convergence of the virial. This is explained in Section \ref{sec:general}.

\subsection{Plan of the paper}
Section \ref{sec:limitflow} describes the limiting flow: the energy $\mathcal F$, its gradient flow in the Wasserstein space, the facts from \cite{AGS} that we use (Proposition \ref{prop:AGSfacts}), a dual formula for its metric slope in quantile variables (Section \ref{rmk:slopeformula}), and a short account of the porous medium equation. Section \ref{sec:Gamma} contains the $\Gamma$-convergence results: a $\Gamma$-liminf for a family of weighted functionals $E_N$ that contains $F_N$ (Proposition \ref{prop:gammaliminf}),  and an exact recovery bound at every finite $N$ (Proposition \ref{prop:recovery}), which also shows that the well-prepared initial data of \textup{(H1)--(H2)} exist. Section \ref{sec:conclusions} proves the theorem: convexity of $F_N$ on ordered configurations yields the absence of excess energy (Section \ref{sec:noexcess}), in a truncated form adapted to the flow (Section \ref{sec:trunc}); the slope inequality follows by duality, after a reduction to increasing test functions (Section \ref{sec:slope}); and the scheme is assembled in Section \ref{sec:conclusion}, where the convergence is first obtained for $W_1$ and then upgraded to $W_2$. Finally, Section \ref{sec:general} shows that the whole proof extends to a class of short-range potentials $V$, the main new ingredient being a lemma stating that convergence of the energies forces convergence of the virial (Lemma \ref{lem:virial}).

\medskip 

{\bf Acknowledgements:} 
The work of C.M. is partially supported by the National Science Foundation under grant number DMS-2103170, by the National Science and Technology Council of Taiwan under grant number 113WIA0110762, and by a Simons Investigator award via Sylvia Serfaty. The work of S.S. was supported by a Simons Investigator award, NSF grant DMS-2247846, and the Institut Universitaire de France.

We thank Matteo Bonforte for helpful discussions on the porous medium equation.
\smallskip

{\bf AI use disclosure:} The proof strategy was conceived by the authors, in particular the idea of using the $\Gamma$-convergence of gradient flows scheme, and the strategy for the slope inequality. AI tools, in particular Claude Opus 5.5, were used to brainstorm on proof details, especially for the extension to general potentials and for checking that the hypotheses of the results imported from \cite{AGS} are satisfied, to search for references, and to help check and polish the paper, including finding and correcting a number of minor errors in an earlier version, and reorganizing Section~\ref{sec:general} for readability. The paper was written by the authors and the authors take full responsibility for its content.
\section{The limiting flow}\label{sec:limitflow}

\subsection{The facts we shall use}\label{sec:AGSfacts}
We now collect, in the form used throughout the paper, the facts we need about $\mathcal F$, defined in \eqref{calF} and its gradient flow. They are all standard; the statement numbers below refer to the first edition of \cite{AGS}.

First, we recall that, $\nu$ being a base point and $T_0,T_1$ the optimal transport maps pushing $\nu$ onto $\mu_0$ and $\mu_1$, the generalized geodesic from $\mu_0$ to $\mu_1$ with base $\nu$ is
\be\label{gengeod}
\mu_t:=\big((1-t)T_0+tT_1\big)_{\#}\nu,\qquad t\in[0,1],
\ee
which reduces to the usual displacement interpolation when $\nu=\mu_0$. Convexity along all such curves is thus, in general, a stronger requirement than displacement convexity, and it is the one that makes the flow unique in \ref{it:existuniq} below; in dimension one the two notions coincide, since $(1-t)T_0+tT_1$ is nondecreasing, so that $\mu_t$ is the displacement interpolation between $\mu_0$ and $\mu_1$. 

\begin{prop}[Facts about the limiting energy and flow]\label{prop:AGSfacts}
Let $\s\ge1$ and let $\mathcal F$ be as in \eqref{calF}. Then:
\begin{enumerate}[label=\textup{(\roman*)},leftmargin=2.6em]
\item\label{it:convexity} $\mathcal F$ is proper, narrowly lower semicontinuous, and convex along generalized geodesics of $(\mathcal P_2(\R),W_2)$. \item\label{it:uppergradient} $|\partial \mathcal F|$ is a strong upper gradient for $\mathcal F$: along any absolutely continuous curve $\mu(t)$ with $\int_a^b|\partial\mathcal F|(\mu)\,|\dot{\mu}|\,dt<\infty$, the function $t\mapsto\mathcal F(\mu(t))$ is absolutely continuous and $|\frac{d}{dt}\mathcal F(\mu(t))|\le|\partial\mathcal F|(\mu(t))\,|\dot{\mu}|(t)$ for a.e.\ $t$.
\item\label{it:existuniq} For every $\mu_0\in\mathcal P_2(\R)$ with $\mathcal F(\mu_0)<\infty$ there is one and only one curve of maximal slope for $\mathcal F$ starting from $\mu_0$, i.e.\ one and only one locally absolutely continuous curve $\mu\in C([0,\infty);\mathcal P_2(\R))$ with $\mu(0)=\mu_0$ satisfying the energy--dissipation equality
\be\label{EDEcont}
\mathcal F(\mu(0))=\mathcal F(\mu(T))+\int_0^T\hal\(|\dot{\mu}|^2(t)+|\partial\mathcal F|^2(\mu(t))\)dt\qquad\text{for every }T>0 ;
\ee
it is the gradient flow
\be \label{gf2}\partial_t \rho =  - \partial_{W_2} \mathcal F(\rho),\ee
it satisfies the evolution variational inequality, i.e.\ for every $\nu\in\mathcal P_2(\R)$ with $\mathcal F(\nu)<\infty$,
\be\label{EVI}
\hal\,\frac{d}{dt}W_2^2\big(\mu(t),\nu\big)+\mathcal F\big(\mu(t)\big)\ \le\ \mathcal F(\nu)\qquad\text{for a.e.\ }t>0 .
\ee
\item\label{it:identification} This curve is $\mu(t)=\rho(t)\,dx$, where $\rho$ is the unique weak solution of \eqref{eqn:rho-PME} with datum $\rho_0$; moreover the slope is given by
\be \label{slopeformula} |\partial \mathcal F|^2(\mu)= \begin{cases} 
(\s+1)^2\displaystyle \int_\R \(\frac{\partial \rho^\s}{\partial x}\)^2 \rho \, dx & \text{if} \ \mu=\rho\,dx, \ \rho^{\s+1}\in W^{1,1}_{\mathrm{loc}}\\\ +\infty & \text{ otherwise},\end{cases} \ee
where $\partial_x\rho^\s$ is computed on the open set $\{\rho>0\}$, on which $\rho^\s\in W^{1,1}_{\mathrm{loc}}$.
\item\label{it:secondmomentPME} Moreover, $\sup_{[0,T]}\int x^2\rho(t)\,dx<\infty$ and
\be\label{secondmomentPME}
\int_\R x^2\rho(t)\,dx=\int_\R x^2\rho_0\,dx+2\s\int_0^t\mathcal F(\rho(\sigma))\,d\sigma\qquad\text{for every }t\ge0 .
\ee
\end{enumerate}
\end{prop}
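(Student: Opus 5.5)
The plan is to obtain each item from the general theory of \cite{AGS}, working throughout in quantile variables, which in dimension one turn $(\mathcal P_2(\R),W_2)$ isometrically into the closed convex cone $K$ of nondecreasing functions in $L^2(0,1)$. If $\mu=\rho\,dx$ has quantile function $Q$, then $\rho(Q(y))=1/Q'(y)$, and therefore
\[
\mathcal F(\mu)=\int_0^1 (Q'(y))^{-\s}\,dy ,
\]
with the convention that $\mathcal F(\mu)=+\infty$ when $Q$ is not absolutely continuous. This is the same integral functional on $K$ that later appears in the slope computations.

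For \ref{it:convexity}: the map $r\mapsto r^{\s+1}$ is convex and superlinear with $r^{\s+1}/r\to\infty$. Lower semicontinuity for narrow convergence then follows from the standard result on internal energies (AGS Rem.~9.3.8). The McCann condition holds because $r^{d}P(r^{-d})=r^{-\s}$ is convex and nonincreasing when $d=1$. This gives convexity along generalized geodesics (AGS Prop.~9.3.9). In one dimension it can also be seen directly: $Q\mapsto\int (Q')^{-\s}$ is convex on $K$, since $a\mapsto a^{-\s}$ is convex on $(0,\infty)$. Properness is clear.

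For \ref{it:uppergradient} and \ref{it:existuniq}: $\mathcal F$ is convex (that is, $\lambda$-convex with $\lambda=0$) along generalized geodesics and lower semicontinuous. Hence AGS Cor.~2.4.10 shows that $|\partial\mathcal F|$ is a strong upper gradient, and Thm.~4.0.4 together with Thm.~11.2.1 gives existence and uniqueness of the minimizing-movement limit. This limit satisfies the EVI \eqref{EVI} and the energy–dissipation equality \eqref{EDEcont}. Uniqueness among curves of maximal slope is AGS Thm.~11.1.3 and its comments for $\lambda$-convex functionals. Alternatively, it follows from the Hilbert-space picture: on $K$ the functional is convex and lower semicontinuous, so its curves of maximal slope are exactly the Brezis gradient-flow solutions, and these are unique.

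For \ref{it:identification}: the subdifferential characterization of internal energies (AGS Thm.~10.4.6) gives the minimal subgradient $\nabla P(\rho)/\rho$ with $P(r)=\s r^{\s+1}$. This requires $P(\rho)=\s\rho^{\s+1}\in W^{1,1}_{\rm loc}$, and then
\[
|\partial\mathcal F|^2=\int_\R\Bigl|\frac{\partial_x(\s\rho^{\s+1})}{\rho}\Bigr|^2\rho\,dx=(\s+1)^2\int_\R\bigl(\partial_x\rho^{\s}\bigr)^2\rho\,dx
\]
on $\{\rho>0\}$, which is \eqref{slopeformula}. The gradient flow is then a distributional solution of $\partial_t\rho=\partial_x(\rho\,\partial_x(\s+1)\rho^\s)=\s\,\partial_x^2\rho^{\s+1}$ (AGS Thm.~11.2.8). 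It coincides with the unique weak solution of \eqref{eqn:rho-PME} in the class where the PME is well posed for $L^1$ data, by Vázquez's uniqueness theory.

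The step I expect to be the main obstacle is this identification. One must check that the regularity class produced by the AGS theory (finite slope for a.e.\ $t$, continuity in $W_2$) lies inside a class where PME uniqueness holds. I would try first to sidestep the issue by defining ``the unique solution'' through the AGS curve, then quoting the $L^1$-contraction uniqueness of weak solutions.

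For \ref{it:secondmomentPME}: I would test the equation against $x^2$, first truncated as $x^2\chi(x/R)$. This gives
\[
\frac d{dt}\int x^2\rho=\s\int 2\rho^{\s+1}=2\s\,\mathcal F(\rho).
\]
To justify letting $R\to\infty$, I would use that $\int_0^T\mathcal F(\rho)\le T\,\mathcal F(\rho_0)$, which holds because $\mathcal F$ decreases, together with the $W_2$ continuity of the curve. Boundedness of $\sup_{[0,T]}\int x^2\rho$ then follows immediately from the identity. Equivalently, one can differentiate $\frac12\|Q\|_{L^2}^2$ along the Hilbert gradient flow, which gives $\langle Q,-\partial\Phi(Q)\rangle=\s\,\Phi(Q)$ by the $(-\s)$-homogeneity of $\Phi(Q)=\int(Q')^{-\s}$ under dilations.
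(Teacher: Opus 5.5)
Your main line is the paper's. Each item is read off from \cite{AGS}: McCann's condition and Prop.~9.3.9, Cor.~2.4.10, the Chapter~11 existence/uniqueness/EVI results, and Th.~10.4.6 for the minimal subgradient $\partial_xP(\rho)/\rho$. The identification with weak solutions of \eqref{eqn:rho-PME} is delegated to the literature, as the paper does with \cite{Otto,VazquezPMEbook}. Finally, \eqref{secondmomentPME} comes from testing the weak form against truncations of $x^2$. Your compactly supported $x^2\chi(x/R)$ works: its second derivatives are uniformly bounded and converge to $2$, $\rho^{\s+1}\in L^1((0,T)\times\R)$ handles the right side, and $\mu(t)\in\mathcal P_2(\R)$ handles the left. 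The paper instead uses convex $\phi_R\uparrow x^2$, which gives the left side by monotone convergence but needs a further compactly supported approximation. The hypotheses you leave unmentioned hold trivially: the growth condition at $0$ in Prop.~9.3.9, and doubling of $r\mapsto r^{\s+1}$ for Th.~10.4.6.

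There is, however, a real error in your quantile framing: $\mathcal F(\mu)<\infty$ does \emph{not} force $Q$ to be absolutely continuous. For $\rho=\frac12(\mathbf 1_{[0,1]}+\mathbf 1_{[2,3]})$ one has $\mathcal F(\mu)=2^{-\s}$, yet $Q$ jumps at $y=\frac12$. In general, jumps and Cantor parts of $dQ$ encode regions where $\rho=0$, whereas a singular part of $\mu$ corresponds to $Q'=0$ on a set of positive measure. With your convention (``$+\infty$ when $Q$ is not absolutely continuous''), the quantile functional is not $\mathcal F$. It is also not lower semicontinuous on $L^2(0,1)$: replacing the jump by a linear ramp on $(\frac12-\delta,\frac12+\delta)$ gives absolutely continuous $Q_\delta\to Q$ whose energies converge to $2^{-\s}$. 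So your alternative uniqueness argument via the Hilbert-space picture, which needs a convex lower semicontinuous functional, fails as written. The fix is the paper's \eqref{calFQ}: take $Q'$ to be the density of the absolutely continuous part of $dQ$, with no further convention.

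Once corrected, your Hilbertian alternatives are sound and genuinely different from the paper's citations. Let $K$ be the cone of nondecreasing functions and $I_K$ its convex indicator.
\begin{itemize}
\item $\widetilde{\mathcal F}+I_K$ is convex, since the absolutely continuous part of $d((1-t)Q_0+tQ_1)$ is $(1-t)Q_0'+tQ_1'$. It is also lower semicontinuous on $L^2(0,1)$.
\item Slopes and metric derivatives are preserved by the isometry, because competitors outside $K$ have infinite energy. Hence curves of maximal slope are Br\'ezis solutions of $\dot Q\in-\partial(\widetilde{\mathcal F}+I_K)(Q)$, unique by monotonicity of the subdifferential.
\item For $\lambda>0$ we have $\lambda Q\in K$ and $\widetilde{\mathcal F}(\lambda Q)=\lambda^{-\s}\widetilde{\mathcal F}(Q)$. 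The subgradient inequality at $\lambda Q$, letting $\lambda\to1^\pm$, gives $\langle\xi,Q\rangle=-\s\widetilde{\mathcal F}(Q)$ for every subgradient $\xi$. Hence $\frac{d}{dt}\|Q\|_{L^2}^2=2\s\widetilde{\mathcal F}(Q)$.
\end{itemize}
The last point is \eqref{secondmomentPME} obtained without any weak formulation or truncation, and it is the exact continuous counterpart of the discrete virial identity \eqref{secondmomentbis}.
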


Let us indicate where each item comes from and why it is needed. 

Item \ref{it:convexity} is ensured by  McCann's {\it displacement convexity} condition: writing $\mathcal F(\rho)=\int\Psi(\rho)$ with $\Psi(\rho)=\rho^{\s+1}$, the criterion is that $u\mapsto u\,\Psi(1/u)=u^{-\s}$ be convex and nonincreasing on $(0,\infty)$, which is precisely the convexity of the interaction kernel used everywhere else in this paper, and holds for every $\s>0$; see \cite[Prop.~9.3.9]{AGS} for internal energies. That proposition also requires a growth condition at the origin, namely $\liminf_{r\downarrow0}\Psi(r)/r^{\alpha}>-\infty$ for some $\alpha>\frac{d}{d+2}=\frac13$, which holds here since $\Psi\ge0$.

 Item \ref{it:uppergradient} follows from \ref{it:convexity} \cite[Cor.~2.4.10]{AGS}; it is what allows one, in Step 5 of Section \ref{sec:conclusion}, to turn the energy--dissipation inequality produced by the scheme into the equality \eqref{EDEcont}.

For item \ref{it:existuniq}: curves of maximal slope are in general not unique, and it is convexity along generalized geodesics that makes them unique and identifies them with the semigroup generated by the evolution variational inequality. Precisely, \cite[Th.~11.1.3]{AGS} states that a curve is of maximal slope with respect to $|\partial\mathcal F|$ if and only if it is a gradient flow in the sense of \eqref{gf2}; and \cite[Th.~11.2.5]{AGS} gives the existence, the superlinear growth of $\Psi$ required there being clear since $\Psi(r)/r=r^\s\to\infty$.  \cite[Th.~11.1.4]{AGS} gives, for $W_2$ and $\mathcal F$ geodesically convex, the uniqueness of that gradient flow together with its characterization by the evolution variational inequality \eqref{EVI}, which is a stronger characterization of the flow than the energy--dissipation equality \eqref{EDEcont}: applying \eqref{EVI} to two solutions, each being tested against the value of the other at the same time, and adding, one formally gets $\frac{d}{dt}W_2^2(\mu(t),\tilde\mu(t))\le0$, i.e.\ the flow depends on $\mu_0$ in a $W_2$-contractive way, hence uniqueness.

Item \ref{it:identification} is the computation of the subdifferential of an internal energy \cite[Th.~10.4.6]{AGS}: one finds that the minimal element of the subdifferential is the velocity field $\partial\mathcal F(\rho)=\frac{\partial_xP(\rho)}{\rho}$ with the pressure
\be\label{pressure}
P(\rho):=\rho\,\Psi'(\rho)-\Psi(\rho)=\s\,\rho^{\s+1},
\ee
so that \eqref{gf2}, understood as the continuity equation $\partial_t\rho=\partial_x\big(\rho\,\partial\mathcal F(\rho)\big)$, reads $\partial_t\rho=\partial_x^2P(\rho)=\s\,\partial_x^2(\rho^{\s+1})$, which is \eqref{eqn:rho-PME}, constant included; this is the case $V=W=0$ of \cite[Example~11.2.4]{AGS}. The same computation gives \eqref{slopeformula}, since $\partial_xP(\rho)=\rho\,\partial_x\Psi'(\rho)$ and $\Psi'(\rho)=(\s+1)\rho^\s$. More precisely, by \cite[Th.~10.4.6]{AGS}, $|\partial\mathcal F|(\mu)<\infty$ if and only if $P(\rho)\in W^{1,1}_{\mathrm{loc}}(\R)$ and $\partial_xP(\rho)=\rho\,w$ for some $w\in L^2(\rho)$, in which case $|\partial\mathcal F|(\mu)=\|w\|_{L^2(\rho)}$; this is why the condition in \eqref{slopeformula} involves $\rho^{\s+1}$. Since $\rho^{\s+1}\in W^{1,1}_{\mathrm{loc}}$ is continuous, $\{\rho>0\}$ is open; on it $\rho^\s=(\rho^{\s+1})^{\s/(\s+1)}\in W^{1,1}_{\mathrm{loc}}$ and $\partial_xP(\rho)=(\s+1)\rho\,\partial_x\rho^\s$, while $\partial_xP(\rho)=0$ a.e.\ on $\{\rho=0\}$. Hence $w=(\s+1)\partial_x\rho^\s$ on $\{\rho>0\}$, which gives the value in \eqref{slopeformula}, the integral being $+\infty$ exactly when the slope is $+\infty$. A dual formula for the slope, in quantile variables, which will be useful later, is derived in Section~\ref{rmk:slopeformula}. The conditions on $\Psi$ required in \cite[Sec.~10.4.7]{AGS} are all satisfied by $\Psi(r)=r^{\s+1}$: it is doubling, since $\Psi(2r)=2^{\s+1}\Psi(r)$; it is convex and differentiable, with $\Psi'(r)=(\s+1)r^\s$ continuous up to $r=0$; and it has superlinear growth. For the identification with weak solutions, and for the well-posedness theory of \eqref{eqn:rho-PME} in its own right, we refer to \cite{Otto,VazquezPMEbook}.

Item \ref{it:secondmomentPME}, which is used only in Step 7 of Section \ref{sec:conclusion}, is obtained by testing the weak formulation of \eqref{eqn:rho-PME} against $C^2$ truncations $\phi_R$ of $x^2$ with $0\le\phi_R''\le2$, $\phi_R\uparrow x^2$ and $\phi_R''\to2$ pointwise, which is licit since $\rho^{\s+1}\in L^1((0,t)\times\R)$ by \eqref{EDEcont} (after a further approximation of $\phi_R$, which grows linearly, by compactly supported functions, using that $\rho$ has finite first moment at each fixed time), and letting $R\to\infty$ by monotone convergence on the left and dominated convergence on the right \cite[Ch.~8]{VazquezPMEbook}.

\subsection{Justification of \eqref{slopeformula}}\label{rmk:slopeformula}
Let us derive a dual form of the formula that will be useful to us later.  Identifying a measure $\mu\in\mathcal P_2(\R)$ with its quantile function $Q\in L^2(0,1)$, which is an isometry onto the closed convex cone $\mathcal Q$ of nondecreasing functions since $W_2$ is the $L^2$ distance of quantile functions, and since $Q'(y)= \rho(Q(y))^{-1}$ at points of differentiability of $Q$ (which are a.e.) and $Q$ pushes the Lebesgue measure of $(0,1)$ forward onto $\mu$, the energy becomes
\be\label{calFQ}
\mathcal F(\mu)=\int_0^1 \frac{1}{(Q'(y))^\s}\,dy=:\widetilde{\mathcal F}(Q),
\ee
where $Q'$ denotes the density of the absolutely continuous part of the measure $dQ$.  This is the quantile form of \eqref{calF}, and is consistent with $\mathcal F=+\infty$ off absolutely continuous measures, since $Q'=0$ on a set of positive measure as soon as $\mu$ has an atom or a singular part. 

We consider the directions $\psi\circ Q$, with $\psi\in C^1_c(\R)$. These are admissible, since for $|\theta|$ small $Q+\theta\,\psi\circ Q$ is still nondecreasing, being equal to $(\mathrm{id}+\theta\psi)\circ Q$ with $\mathrm{id}+\theta\psi$ increasing, so that it is the quantile function of $\mu_\theta:=(\mathrm{id}+\theta\psi)_\#\mu$; and $\|\psi\circ Q\|_{L^2(0,1)}=\|\psi\|_{L^2(\mu)}$, so that $W_2(\mu,\mu_\theta)=|\theta|\,\|\psi\|_{L^2(\mu)}$. Along such a direction, $(Q+\theta\,\psi\circ Q)'=Q'(1+\theta\psi'(Q))$, so that
\be\label{directional}
\frac{d}{d\theta}\Big|_{\theta=0}\widetilde{\mathcal F}\big(Q+\theta\,\psi\circ Q\big)=-\int_0^1 \frac{\s} {(Q')^{\s+1}}\,Q'\,\psi'(Q)\,dy=-\s\int_\R\psi'\,\rho^{\s+1}\,dx,
\ee
using again that $Q'=1/\rho(Q)$ and the fact that $Q$ pushes the Lebesgue measure of $(0,1)$ forward onto $\mu$. We claim that
\be\label{slopedual}
|\partial\mathcal F|(\mu)=\sup_{\psi\in C^1_c(\R)}\frac{\s\int_\R\psi'\rho^{\s+1}\,dx}{\(\int_\R\psi^2\rho\,dx\)^{\hal}} ,
\ee
which is exactly the quantity appearing in \eqref{slope2} below. Indeed, by \eqref{directional} and the definition of the local slope applied along $\mu_\theta$ as $\theta\downarrow0$, $|\partial\mathcal F|(\mu)\ge\s\int_\R\psi'\rho^{\s+1}dx/\|\psi\|_{L^2(\mu)}$ for every $\psi$ with $\|\psi\|_{L^2(\mu)}>0$. Conversely, let $S$ be the right-hand side of \eqref{slopedual} and assume $S<\infty$. The linear functional \[\ell(\psi):=\s\int_\R\psi'\rho^{\s+1}dx\] on $C^1_c(\R)$ satisfies \[|\ell(\psi)|\le S\|\psi\|_{L^2(\mu)}\] by definition of $S$, applied to $\pm\psi$ (when $\|\psi\|_{L^2(\mu)}=0$, apply it instead to $\pm\psi+\ep\varphi$ with $\|\varphi\|_{L^2(\mu)}>0$ and let $\ep\to0$; note that no regularity of $\rho$ is used here, as it is the finiteness of $S$ that will produce it). Hence it is well defined on $\mu$-a.e.\ equivalence classes and extends by continuity to $L^2(\mu)$, in which $C^1_c(\R)$ is dense. By the Riesz representation theorem there is $v\in L^2(\mu)$ with \[\|v\|_{L^2(\mu)}=S, \quad \ell(\psi)=\int_\R v\,\psi\,\rho\,dx\] for all $\psi$. Hence $\partial_xP(\rho)=-v\rho$ in the sense of distributions, with $P$ as in \eqref{pressure}; since $v\rho\in L^1(\R)$ by the Cauchy--Schwarz inequality and $P(\rho)\in L^1(\R)$, we get $P(\rho)\in W^{1,1}(\R)$ with $\partial_xP(\rho)=\rho w$, $w=-v\in L^2(\mu)$, and \cite[Th.~10.4.6]{AGS} gives $|\partial\mathcal F|(\mu)=\|w\|_{L^2(\mu)}=S$. On the finiteness domain described in \ref{it:identification}, an integration by parts (using $\partial_xP(\rho)=(\s+1)\rho\,\partial_x\rho^\s$, see the discussion of item~\ref{it:identification} in Section~\ref{sec:AGSfacts}) turns the numerator into $\int_\R v\,\psi\,\rho\,dx$ with $v=-(\s+1)(\rho^\s)'$, and \eqref{slopedual} becomes $\|v\|_{L^2(\mu)}$, i.e.\ \eqref{slopeformula}.

\subsection{Background on the porous medium equation}\label{sec:PMEbackground}

The porous medium equation $\partial_t\rho=\Delta(\rho^m)$, here with $m=\s+1>1$, goes back to the modelling of gas flow through a porous medium \cite{Muskat}; we refer to \cite{VazquezPMEintro} for a short introduction and to \cite{VazquezPMEbook} for the complete theory. It is degenerate parabolic at $\rho=0$, so that solutions with compactly supported data have compact support at all times and propagate with finite speed, the prototype being the Barenblatt profiles; solutions are smooth and positive inside the positivity set but only H\"older continuous across the free boundary, and instantaneous smoothing (the Aronson--B\'enilan estimate) holds. Its interpretation as the $W_2$ gradient flow of $\int\rho^m$  is precisely what our proof uses, and none of the finer regularity theory is needed here. 

Deriving \eqref{eqn:rho-PME} from microscopic dynamics has a long history: from interacting diffusions and zero-range or exclusion-type particle systems with degenerate rates \cite{GLT,BlondelCancesSasadaSimon,GessHeydecker}, and, closer to the present setting, from deterministic systems of the form \eqref{eqn:main} \cite{KO90} (see also \cite{Lang} for the unscaled system with infinitely many particles). Lagrangian and particle discretizations of \eqref{eqn:rho-PME} and of related diffusive flows, in a spirit close to the quantile formulation used here, are studied in \cite{CarrilloCraigPatacchini,CraigEHT,GallouetMerigotNatale,DiFrancescoIorioSchmidtchen}, and a discrete Aronson--B\'enilan estimate for a Lagrangian particle scheme is obtained in \cite{DiFrancescoMatthes}.

\section{$\Gamma$-convergence results}\label{sec:Gamma}
We next turn to proving the needed $\Gamma$-convergence results.
For reasons that will appear below we will not only prove the $\Gamma$-liminf for $F_N$ but for a more general class of functionals.  
Applying the proposition to the identity function $\psi(t)=t$, for which $E_N=2F_N$, will then yield the $\Gamma$-liminf convergence of  $F_N$ to $\mathcal{F}$.

\begin{prop}\label{prop:gammaliminf}
Let $\psi$ be a $C^1$ function on $\R$ with $\psi'\ge c_0>0$.
Let \be \label{defE}
E_N(\mathbf{x}_N)
:=
\frac{1}{Z_N(\s) N^{\s+1}}
\sum_{i\neq j}
\frac{
(\psi(x_i)-\psi(x_j))(x_i-x_j)
}{
|x_i-x_j|^{\s+2}
},
\ee
with the convention $E_N(\mathbf{x}_N)=+\infty$ if two coordinates of $\mathbf{x}_N$ coincide. If
\[
\mathbf{i}_N(\mathbf{x}_N)\rightharpoonup \mu
\qquad\text{narrowly in }\mathcal P(\R),
\]
then
\[
\liminf_{N\to\infty}
E_N(\mathbf{x}_N)
\geq \begin{cases}
2\displaystyle \int_\R\psi'(x)\rho(x)^{\s+1}\,dx & \text{if} \ \mu =\rho \,  dx\\  + \infty & \text{otherwise}\end{cases}.
\]

\end{prop}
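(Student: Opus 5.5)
The strategy is to drop all but nearest-neighbour-type interactions at gap distance $k\le K$, use positivity of every term, and reduce to a convexity (Jensen) argument in quantile variables. Since the particles may be assumed ordered, $x_1<\dots<x_N$, each summand is nonnegative. Indeed $(\psi(x_i)-\psi(x_j))(x_i-x_j)\ge c_0|x_i-x_j|^2>0$, so every summand is at least $c_0|x_i-x_j|^{-\s}$. First we dispose of the case where $\mu$ is not absolutely continuous. Keeping only the consecutive pairs gives
\[
E_N\ge \frac{2c_0}{Z_N N^{\s+1}}\sum_{i} (x_{i+1}-x_i)^{-\s}.
\]
The same Jensen argument as below, applied to this sum, bounds $E_N$ from below by $c\,Z_N^{-1}\widetilde{\mathcal F}$-type quantities of the piecewise linear interpolated quantile function. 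This blows up if $\mu$ has a singular part, because $Q'$ then vanishes on a set of positive measure. For $\s=1$ the factor $Z_N\sim\log N$ kills this crude bound, so I would instead use the full multi-scale estimate described next, which handles both cases uniformly.

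The main step goes as follows. Fix $K$ and, for $1\le k\le K$, keep the pairs $(i,i+k)$. By the mean value theorem, $\psi(x_{i+k})-\psi(x_i)=\psi'(\xi_{i,k})(x_{i+k}-x_i)$, so the contribution of these pairs is
\[
\frac{2}{Z_N N^{\s+1}}\sum_{k\le K}\sum_i \psi'(\xi_{i,k})\,(x_{i+k}-x_i)^{-\s}.
\]
Write $x_{i+k}-x_i=\frac kN\cdot\big(N(x_{i+k}-x_i)/k\big)$. Then the inner sum equals $k^{-\s}\,\frac1N\sum_i \psi'(\xi_{i,k})\,g_{i,k}^{-\s}$, where $g_{i,k}$ is the rescaled average gap. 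Let $Q_N$ be the piecewise linear quantile function interpolating $Q_N(i/N)=x_i$. Then $g_{i,k}$ is the average of $Q_N'$ over $[i/N,(i+k)/N]$, and Jensen gives $g_{i,k}^{-\s}\ge$ the average of $(Q_N')^{-\s}$ over that interval. The inner sum is therefore at least $k^{-\s}$ times roughly $\int\psi'(Q_N)(Q_N')^{-\s}\,dy$, up to boundary terms and up to the error from replacing $\psi'(\xi)$ by $\psi'(Q_N(y))$. Summing over $k\le K$ produces a factor $Z_{K+1}$, and hence the lower bound
\[
\liminf E_N\ge 2\,\frac{Z_{K+1}(\s)}{\lim Z_N(\s)}\liminf\int_0^1\psi'(Q_N)(Q_N')^{-\s}dy.
\]

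For $\s>1$ we let $K\to\infty$. For $\s=1$, $Z_{K}/Z_N\to0$ for fixed $K$, so we must take $K=K_N$ with $Z_{K_N}/Z_N\to1$, e.g.\ $K_N=N/\log N$. The error from the variation of $\psi'$ then requires continuity of $\psi'$ on intervals of length $x_{i+k}-x_i$, which is small only in the bulk; I would localize to $|x|\le R$ and use uniform continuity there. I expect this to be the main obstacle: controlling it with $\psi'$ merely continuous, and on the events where gaps are large. One option is to restrict to pairs with $x_{i+k}-x_i\le\delta$ (the others are not needed for a lower bound) and note that Jensen still applies on the retained ones. An alternative is to use $\psi'\ge c_0$ to absorb the relative error $\omega(\delta)/c_0$ multiplicatively.

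Finally, lower semicontinuity finishes the proof. Narrow convergence of $\mathbf{i}_N$ gives $Q_N\to Q$ a.e.\ on $(0,1)$. The functional $Q\mapsto\int\psi'(Q)(Q')^{-\s}$ is lower semicontinuous under this convergence, since it is convex in $Q'$ with a continuous positive weight; this is the standard lsc for integrands convex in the derivative, or it follows from the dual formula $u^{-\s}=\sup_a\{\dots\}$ with test functions. The limit is $+\infty$ unless $\mu=\rho\,dx$, and otherwise it equals $\int\psi'(Q)\rho(Q)^{\s}dy=\int\psi'\rho^{\s+1}dx$, exactly as in \eqref{calFQ}.
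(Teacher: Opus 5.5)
There is a genuine gap in your main step: Jensen's inequality goes the other way. Since $u\mapsto u^{-\s}$ is convex and $g_{i,k}=\frac Nk\int_{i/N}^{(i+k)/N}Q_N'\,dy$, Jensen gives $g_{i,k}^{-\s}\le \frac Nk\int_{i/N}^{(i+k)/N}(Q_N')^{-\s}\,dy$. So the scale-$k$ energy is bounded \emph{above}, not below, by $k^{-\s}$ times the nearest-neighbour energy.

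Your intermediate bound $\liminf_N E_N\ge 2\frac{Z_{K+1}(\s)}{\lim_N Z_N(\s)}\liminf_N\int_0^1\psi'(Q_N)(Q_N')^{-\s}\,dy$ is in fact false. Take $\psi=\id$, $\s>1$, and $N$ points in $[0,1]$ whose rescaled gaps $N(x_{i+1}-x_i)$ alternate between $\ep$ and $2-\ep$. Then $\int_0^1(Q_N')^{-\s}\approx\frac12\ep^{-\s}$. Every pair at index gap $k\ge2$ has rescaled length at least $k-1$, so the scales $k\ge2$ contribute $O(1)$ and $E_N=\ep^{-\s}/\zeta(\s)+O(1)$. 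Your right-hand side is about $Z_{K+1}(\s)\,\ep^{-\s}/\zeta(\s)$, which is strictly larger for small $\ep$ once $K\ge2$. Local oscillation of the gaps inflates the nearest-neighbour energy without the larger scales following. No reduction of all scales to the nearest-neighbour functional can work, and your final lower semicontinuity step, fine in itself, cannot repair this.

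The missing idea is to apply Jensen in the summation index $i$ at a fixed gap $k$, not inside the window $[i,i+k]$. For $m$ ordered points in an interval of length $L$, the telescoping bound $\sum_i(y_{i+k}-y_i)\le kL$ gives
\[
\sum_{i=1}^{m-k}(y_{i+k}-y_i)^{-\s}\ge (m-k)\Big(\frac{1}{m-k}\sum_i (y_{i+k}-y_i)\Big)^{-\s}\ge (m-k)^{\s+1}(kL)^{-\s}.
\]
This bound is uniform in $k$, so you can sum over all $k<m$ and get $2S_{m,\s}/L^\s$ with $S_{m,\s}\sim m^{\s+1}Z_m(\s)$. That is exactly what is needed to match the $\log N$ normalization when $\s=1$. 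A correct fixed-$k$ lower semicontinuity would handle $\s>1$ by letting $K\to\infty$. For $\s=1$, however, you need a lower bound uniform in $k\le K_N$, which your argument obtained only from the reversed inequality.

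Apply this in each cell $I$ of a partition into small intervals with $\mu(\partial I)=0$. Discard cross-cell pairs (all terms are nonnegative) and bound $\psi'(\xi_{i,k})\ge\inf_I\psi'$; this also removes the modulus-of-continuity difficulty you flagged for large gaps. You obtain $\liminf_N E_N\ge 2\sum_I(\inf_I\psi')\,\mu(I)^{\s+1}/|I|^\s$. Taking the supremum over partitions yields $2\int\psi'\rho^{\s+1}$. When $\mu$ has a singular part it yields $+\infty$: cover the singular mass by finitely many intervals of small total length and use convexity.
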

\begin{proof}
 If $\liminf_NE_N(\mathbf{x}_N)=+\infty$ there is nothing to prove, so we may, up to a subsequence, assume that $\sup_N E_N(\mathbf{x}_N)<\infty$ and the liminf is a limit.  In particular the coordinates of $\mathbf{x}_N$ are pairwise distinct, and we order them.

\emph{Step 1: Reduction to a bounded interval.} All the terms of $E_N$ are nonnegative since $\psi$ is increasing, so pairs having a particle outside a bounded interval $[a,b]$ with $\mu(\{a,b\})=0$ may be discarded. It therefore suffices to prove the lower bound with $\int_a^b$ in place of $\int_\R$ (and, in the singular case, to find one such interval on which the bound is $+\infty$), and then to let $a\to-\infty$, $b\to+\infty$ by monotone convergence. Both sides being invariant under translations, and the affine change of variables $x\mapsto(x-a)/(b-a)$ multiplying $E_N$ and $\int\psi'\rho^{\s+1}$ by the same factor $(b-a)^\s$ (with $\psi$ replaced by $x\mapsto\psi(a+(b-a)x)/(b-a)$, whose derivative is still bounded below by $c_0$), we may and do assume $[a,b]=[0,1]$ in Steps 3--6 below, keeping in mind that $\mu$ may have mass outside $[0,1]$.

\emph{Step 2: Applying Jensen's inequality.}
Let
$0\leq y_1<\cdots<y_m\leq L$.
Then we claim that 
\be \label{claim1}
\sum_{\substack{i,j=1\\i\neq j}}^m
\frac1{|y_i-y_j|^\s}
\geq 
\frac{2}{L^\s} S_{m,\s}, \qquad S_{m,\s}:=
\sum_{k=1}^{m-1}
\frac{(m-k)^{\s+1}}{k^\s}.
\ee
First we split according to the index gap:
\[
\sum_{i\neq j}\frac1{|y_i-y_j|^\s}
=
2\sum_{k=1}^{m-1}
\sum_{i=1}^{m-k}
\frac1{(y_{i+k}-y_i)^\s}.
\]
For fixed $k$, Jensen's inequality yields
\[
\sum_{i=1}^{m-k}
\frac1{(y_{i+k}-y_i)^\s}
\geq
(m-k)
\left(
\frac{1}{m-k}
\sum_{i=1}^{m-k}
(y_{i+k}-y_i)
\right)^{-\s}.
\]
By telescoping sum, we have
\[
\sum_{i=1}^{m-k}(y_{i+k}-y_i)\leq kL.
\]
Inserting into the above and summing over $k$ proves \eqref{claim1}.

\emph{Step 3: Asymptotics of the sum.}
We claim that 
\be
\frac{S_{m,\s}}{m^{\s+1}Z_{m}(\s)}
\longrightarrow1
\qquad
\text{as }m\to\infty.
\label{claim2}
\ee

Since 
\[
S_{m,\s}
=
m^{\s+1}
\sum_{k=1}^{m-1}
\frac1{k^\s}
\left(1-\frac{k}{m}\right)^{\s+1},
\]
we have
\[
0\leq
m^{\s+1}Z_{m}(\s)-S_{m,\s}
=
m^{\s+1}
\sum_{k=1}^{m-1}
\frac1{k^\s}
\left[
1-\left(1-\frac{k}{m}\right)^{\s+1}
\right].
\]
Since $0\leq1-(1-t)^{\s+1}\leq C_\s t$ for $0\leq t\leq1$, we obtain
\be\label{errorSm}
0\leq
m^{\s+1}Z_{m}(\s)-S_{m,\s}
\leq
C_\s\, m^\s
\sum_{k=1}^{m-1}\frac1{k^{\s-1}} .
\ee
The last sum is bounded if $\s>2$, of order $\log m$ if $\s=2$, and of order $m^{2-\s}$ if $1\leq\s<2$, so that the right-hand side of \eqref{errorSm} is $O(m^\s)$, $O(m^\s\log m)$ and $O(m^2)$ respectively. In each case it is negligible in front of $m^{\s+1}Z_{m}(\s)$: if $\s>1$ then $Z_m(\s)\to\zeta(\s)\in(0,\infty)$ and the three bounds are $o(m^{\s+1})$, while if $\s=1$ the bound is $O(m^2)$ whereas $m^{\s+1}Z_m(\s)=m^2Z_m(1)$ with $Z_m(1)=\sum_{k=1}^{m-1}k^{-1}\sim\log m\to\infty$. In all cases
\[
m^\s\sum_{k=1}^{m-1}k^{1-\s}
=
o\bigl(m^{\s+1}Z_{m}(\s)\bigr),
\] therefore \eqref{claim2} follows.

\emph{Step 4: Finite partitions.} 
Let $I\subset[0,1]$ be an interval of length $L$, and suppose that
$m$ particles lie in $I$.

For any two particles $x_i,x_j\in I$, the mean value theorem gives
\[
\psi(x_j)-\psi(x_i)
\geq
\left(\inf_{x\in I}\psi'(x)\right)(x_j-x_i).
\]
Therefore
\[
\frac{(\psi(x_j)-\psi(x_i))(x_j-x_i)}
{(x_j-x_i)^{\s+2}}
\geq
\left(\inf_{x\in I}\psi'(x)\right)
\frac1{(x_j-x_i)^\s}.
\]
Using \eqref{claim1}, we deduce \begin{equation}
E_N(\mathbf{x}_N)
\geq
\frac{2}{Z_N(\s)N^{\s+1}}
\left(\inf_I\psi'\right)
\frac{S_{m,\s}}{L^\s}.
\label{eq:local}
\end{equation}

Let now
$
\Pi=\{I_1,\dots,I_M\}
$
be a finite partition of $[0,1]$ into intervals, and write
\[
L_j:=|I_j|.
\]
We assume that
$
\mu(\partial I_j)=0
$ for every $j$
and set
$
m_{N,j}:=
\#\{i:x_i\in I_j\}.
$
Since
$
\mu_N:=\mathbf{i}_N(\mathbf{x}_N)\rightharpoonup\mu,
$ we have
\[
\frac{m_{N,j}}{N}
=
\mu_N(I_j)
\longrightarrow
\mu(I_j).
\label{eq:mass-convergence}
\]

All terms in the energy are nonnegative because $\psi$ is increasing.
Hence we may discard all interactions between different intervals. Applying
\eqref{eq:local} to every $I_j$ gives, after rearranging some terms,
\be
E_N(\mathbf{x}_N)
\geq
2\sum_{j=1}^M
\left(\inf_{I_j}\psi'\right)
\frac{1}{L_j^\s}
\left(\frac{m_{N,j}}{N}\right)^{\s+1}
\frac{S_{m_{N,j},\s}}
{m_{N,j}^{\s+1}Z_N(\s)}.
\label{eq:partition-pre}
\end{equation}
(with the convention that the terms with $m_{N,j}\le1$ vanish, consistently with \eqref{eq:local}).
For every $j$ such that $\mu(I_j)>0$, we have
$
m_{N,j}\longrightarrow\infty
$
 and thus by \eqref{claim2},
\[
\lim_{N\to \infty}\frac{S_{m_{N,j},\s}}
{m_{N,j}^{\s+1}Z_{m_{N,j}}(\s)}
=1.
\]
Moreover, $
\frac{Z_{m_{N,j}}(\s)}{Z_N(\s)}\to 1. $
Indeed, if $\s>1$, both quantities converge to $\zeta(\s)$. For $\s=1$ we use that  $m_{N,j}/N\to\mu(I_j)>0$ giving $\log(m_{N,j}/N)=O(1)$, hence 
\[
\frac{Z_{m_{N,j}}(1)}{Z_N(1)}
=
\frac{\log m_{N,j}+O(1)}{\log N+O(1)}
=
1+\frac{\log\big(m_{N,j}/N\big)+O(1)}{\log N+O(1)}
\longrightarrow1 .
\]
Consequently,
\[\lim_{N\to \infty}
\frac{S_{m_{N,j},\s}}
{m_{N,j}^{\s+1}Z_N(\s)}=1
\]
whenever $\mu(I_j)>0$.

If $\mu(I_j)=0$, we just discard the (nonnegative)  corresponding term in the right-hand side. In all cases, we obtain 
\begin{equation}
\liminf_{N\to\infty}
E_N(\mathbf{x}_N)
\geq
2\sum_{j=1}^M
\left(\inf_{I_j}\psi'\right)
\frac{\mu(I_j)^{\s+1}}{|I_j|^\s}.
\label{eq:partition-liminf}
\end{equation}

Since this holds for every finite partition whose boundaries have zero
$\mu$-mass,
\begin{equation}
\liminf_{N\to\infty}
E_N(\mathbf{x}_N)
\geq
2 G_{\s,\psi}(\mu),\qquad G_{\s,\psi}(\mu):=
\sup_{\Pi}
\sum_{I\in\Pi}
\left(\inf_I\psi'\right)
\frac{\mu(I)^{\s+1}}{|I|^\s}
\label{eq:weighted-partition}
\end{equation}
where the sup is taken over partitions of $[0,1]$ into intervals whose endpoints have zero $\mu$-mass.

\emph{Step 5: Identification of the functional $G_{\s,\psi}$.}
We now show that, for absolutely continuous $\mu=\rho\,dx$,
\begin{equation}
G_{\s,\psi}(\rho\,dx)
=
\int_0^1\psi'(x)\rho(x)^{\s+1}\,dx.
\label{eq:weighted-identification}
\end{equation}
Define the piecewise constant  average
\[
\rho_{\Pi}(x)
:=
\frac1{|I|}\int_I\rho(t)\,dt,
\qquad x\in I.
\]
Then
\[
\sum_{I\in\Pi}
\(\inf_I \psi'\) \frac{\mu(I)^{\s+1}}{|I|^\s}
=
\int_0^1
a_{\Pi}(x)
\rho_{\Pi}(x)^{\s+1}\,dx,
\]
where
\[
a_{\Pi}(x)=\inf_I \psi'
\qquad\text{for }x\in I.
\]
Since $t\mapsto t^{\s+1}$ is convex, Jensen's inequality gives
\[
\int_I\rho_{\Pi}(x)^{\s+1}\,dx
=
|I|\left(\frac1{|I|}\int_I\rho\right)^{\s+1}
\leq
\int_I\rho(x)^{\s+1}\,dx
\qquad\text{for every }I\in\Pi .
\]
Multiplying by $a_\Pi \geq0$, summing over $I\in\Pi$, and using then that $a_\Pi\leq\psi'$, we obtain
\[
\sum_{I\in\Pi}
\(\inf_I \psi'\)\frac{\mu(I)^{\s+1}}{|I|^\s}
=
\int_0^1 a_{\Pi}\,\rho_{\Pi}^{\s+1}\,dx
\leq
\int_0^1 a_{\Pi}\,\rho^{\s+1}\,dx
\leq
\int_0^1\psi'(x)\rho(x)^{\s+1}\,dx ,
\]
and since this holds for every partition $\Pi$, we have
\[
G_{\s,\psi}(\rho\,dx)
\leq
\int_0^1\psi'(x)\rho(x)^{\s+1}\,dx.
\]

For the reverse inequality, let
\[
\omega_{\psi'}(\delta)
:=
\sup_{\substack{x,y\in[0,1]\\|x-y|\leq\delta}}
|\psi'(x)-\psi'(y)|
\]
be the modulus of continuity of $\psi'$. If the mesh of $\Pi$ is
at most $\delta$, then
\[
0\leq
\psi'(x)-a_{\Pi}(x)
\leq
\omega_{\psi'}(\delta).
\]
Therefore
\begin{align}
\int_0^1
a_{\Pi}\rho_{\Pi}^{\s+1}\,dx
&\geq
\int_0^1
\psi'\rho_{\Pi}^{\s+1}\,dx
-
\omega_{\psi'}(\delta)
\int_0^1\rho_{\Pi}^{\s+1}\,dx.
\label{eq:modulus}
\end{align}
By Jensen's inequality,
\[
\int_0^1\rho_{\Pi}^{\s+1}\,dx
\leq
\int_0^1\rho^{\s+1}\,dx.
\]
Furthermore, as the mesh of $\Pi$ tends to zero,
$
\rho_{\Pi}\to\rho
\ \text{in }L^{\s+1}(0,1),
$ hence
$
\rho_{\Pi}^{\s+1}
\to
\rho^{\s+1}
\ \text{in }L^1(0,1),
$
and therefore
\[
\int_0^1
\psi'\rho_{\Pi}^{\s+1}\,dx
\longrightarrow
\int_0^1
\psi'\rho^{\s+1}\,dx.
\]
Letting $\delta \to 0$ and using that 
$\int_0^1 \rho_\Pi^{\s+1}\le \int_0^1 \rho^{\s+1}$ by Jensen, the second term on the right-hand side of 
\eqref{eq:modulus} tends to $0$, and we obtain
\[
G_{\s,\psi}(\rho\,dx)
\geq\int_0^1\psi'(x)\rho(x)^{\s+1}\,dx.
\]

Together with the opposite inequality, this proves
\eqref{eq:weighted-identification} when $\rho\in L^{\s+1}$. If $\int_0^1\rho^{\s+1}=+\infty$, then along a nested sequence of partitions with mesh tending to zero, $\rho_\Pi\to\rho$ a.e. by Lebesgue differentiation, so $\int_0^1a_\Pi\rho_\Pi^{\s+1}\ge c_0\int_0^1\rho_\Pi^{\s+1}\to+\infty$ by Fatou's lemma, and $G_{\s,\psi}(\rho\,dx)=+\infty$ as well. Combining \eqref{eq:weighted-partition} with \eqref{eq:weighted-identification}, the proposition is proved in case of absolutely continuous measures.

\emph{Step 6: The case of singular measures.}

Suppose now that $\mu$ has a nonzero singular part; by Step 1 we may assume that it gives positive mass to $(0,1)$. Then there exists a
Borel set $S\subset(0,1)$ such that
\[
|S|=0
\qquad\text{and}\qquad
\mu(S)=m>0.
\]
Fix $\varepsilon>0$. There exists an open set $U$ with $S\subset U\subset(0,1)$ such that $|U|<\varepsilon$ (and $\mu(U)\ge\mu(S)=m$).

The open set $U$ is a countable disjoint union of open intervals; keeping finitely many of them, shrunk slightly so that their endpoints carry no $\mu$-mass, we obtain disjoint intervals $I_1,\dots,I_M$ with
\[
\sum_{j=1}^M|I_j|<\ep,\qquad \sum_{j=1}^M\mu(I_j)>\frac m2 .
\]
By convexity,
\[
\sum_{j=1}^M
\frac{\mu(I_j)^{\s+1}}{|I_j|^\s}
\geq
\frac{
\left(\sum_{j=1}^M\mu(I_j)\right)^{\s+1}
}{
\left(\sum_{j=1}^M|I_j|\right)^\s
}.
\]
Hence
\[
\sum_{j=1}^M
\left(\inf_{I_j}\psi'\right)
\frac{\mu(I_j)^{\s+1}}{|I_j|^\s}
\geq
c_0
\frac{(m/2)^{\s+1}}{\varepsilon^\s}.
\]
Completing $I_1,\dots,I_M$ into a partition of $[0,1]$ by intervals whose endpoints have zero $\mu$-mass (recall that $\mu(\{0,1\})=0$ by Step 1) only adds nonnegative terms, so letting $\varepsilon\to 0$ gives
\[
G_{\s,\psi}(\mu)=+\infty.
\]
Thus \eqref{eq:weighted-partition} implies
\[
\liminf_{N\to\infty}
E_N(\mathbf{x}_N)
=+\infty.
\]

This concludes the proof of the proposition in all cases.
\end{proof}
We now complement the proof of this $\Gamma$-liminf by that of the $\Gamma$-limsup in the case $\psi(t)=t$, showing the existence of a recovery sequence. This shows that the well-prepared initial data assumption can be realized, but will also be the crucial ingredient in the proof of the absence of excess energy in Section \ref{sec:noexcess}.

The recovery sequence is obtained by averaging the quantile function of the limit over the $N$ grid intervals. The resulting upper bound holds at every finite $N$, for every $\s\ge 1$ at once, and requires no regularity of the limiting density: it is a consequence of the convexity of $t\mapsto t^{-\s}$ alone, applied scale by scale.

In what follows we let $I_i:=\(\frac{i-1}{N},\frac iN\right]$, $i=1,\dots,N$, denote the grid intervals of $(0,1]$ and $Q$ denote the quantile function of $\mu= \rho\,dx$.

\begin{prop}\label{prop:recovery}
Let $\s\ge1$ and let $\rho$ be a probability density on $\R$ with $\int_\R\rho^{\s+1}\,dx<\infty$ and $\int_\R x^2\rho\,dx<\infty$, and $\mu = \rho \, dx$. Define
\be\label{defyN}
y_i:=N\int_{I_i}Q(r)\,dr,\qquad i=1,\dots,N .
\ee
Then $\mathbf{y}_N=(y_1,\dots,y_N)$ is increasing, and for every $N$,
\be\label{recovery}
F_N(\mathbf{y}_N)\ \le\ \mathcal F(\mu) .
\ee
Moreover $\mathbf{y}_N\overset{S}{\rightharpoonup}\mu$, so that $\mathbf{y}_N$ is a recovery sequence and the $\Gamma$-limsup holds.
\end{prop}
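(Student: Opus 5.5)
Monotonicity is immediate: $Q$ is nondecreasing and $\rho$ has no atoms, so $Q$ is not constant on any grid interval (a constant value on $I_i$ would be an atom of mass $\ge 1/N$). Hence the averages satisfy $y_i<y_{i+1}$. For the energy bound, I will write the gap of index $k$ as an integral of $Q'$:
\[
y_{i+k}-y_i=N\int_{I_i}\big(Q(r+k/N)-Q(r)\big)\,dr=N\int_{I_i}\int_r^{r+k/N}Q'(u)\,du\,dr .
\]
Only the absolutely continuous part of $dQ$ enters, and dropping the singular part only decreases the gap, which is the favorable direction. Equivalently,
\[
y_{i+k}-y_i=\frac kN\int_0^1 w_{i,k}(u)\,Q'(u)\,du,
\]
where the weight $w_{i,k}\ge0$ has total mass $1$: $N/k$ times the triangular (trapezoidal) kernel obtained by convolving $N\mathbf 1_{I_i}$ with $\mathbf 1_{[0,k/N]}$.

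Since $t\mapsto t^{-\s}$ is convex on $(0,\infty)$, Jensen's inequality with the probability weight $w_{i,k}\,du$ gives
\[
(y_{i+k}-y_i)^{-\s}\le \Big(\frac Nk\Big)^{\s}\int w_{i,k}(u)\,(Q'(u))^{-\s}\,du .
\]
This is the convexity "scale by scale" announced before the proposition. The next step is to sum over $i=1,\dots,N-k$. Each $u\in(0,1)$ is covered with total weight $\sum_i w_{i,k}(u)\le N/k\cdot(k/N)\cdot N\cdot\frac1N\cdot$(const), and I will verify that $\sum_i w_{i,k}(u)\le N$. Indeed, $\sum_i N\mathbf 1_{I_i}\le N$ pointwise, and convolving with $\frac Nk\mathbf 1_{[0,k/N]}$, which has mass $1$, preserves this bound. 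Therefore
\[
\sum_{i=1}^{N-k}(y_{i+k}-y_i)^{-\s}\le N\Big(\frac Nk\Big)^{\s}\int_0^1 (Q')^{-\s}\,du=\frac{N^{\s+1}}{k^\s}\,\mathcal F(\mu),
\]
using \eqref{calFQ}. Summing over $k=1,\dots,N-1$ then gives
\[
F_N(\mathbf y_N)=\frac{1}{N^{\s+1}Z_N(\s)}\sum_{k=1}^{N-1}\sum_i(y_{i+k}-y_i)^{-\s}\le \frac{1}{Z_N(\s)}\sum_{k}k^{-\s}\,\mathcal F(\mu)=\mathcal F(\mu),
\]
exactly by \eqref{defZN}. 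The one point needing care is the bookkeeping of the weights near $u\in(0,1)$ boundaries. Because only $i\le N-k$ are used, the shifted intervals stay inside $(0,1]$, so the bound $\sum_i w_{i,k}\le N$ suffices and no boundary loss occurs.

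For the convergence $\mathbf y_N\overset{S}{\rightharpoonup}\mu$, I will use that $W_2$ is the $L^2$ distance between quantile functions. The quantile function of $\mathbf i_N(\mathbf y_N)$ is the step function $Q_N=\sum_i y_i\mathbf 1_{I_i}$, which is the conditional expectation of $Q$ onto the grid $\sigma$-algebra. Since $Q\in L^2(0,1)$ by the finite second moment, martingale or Lebesgue differentiation (or density of continuous functions, together with the $L^2$-contractivity of averaging) gives $\|Q_N-Q\|_{L^2}\to0$. Combined with the $\Gamma$-liminf of Proposition~\ref{prop:gammaliminf} (taking $\psi=\mathrm{id}$), this yields the $\Gamma$-limsup. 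I expect the main obstacle to be the Jensen/weight-counting step, and in particular making sure the constant comes out exactly $1$ rather than merely bounded.
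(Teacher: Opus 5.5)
Your proof is correct and follows the paper's argument. The paper proves the same scale-by-scale convexity bound with two successive Jensen inequalities: first averaging over $I_i$, then over $[r,r+k/N]$, followed by an exchange of the order of integration. Your single Jensen inequality with the trapezoidal kernel $w_{i,k}$, together with the pointwise bound $\sum_i w_{i,k}\le N$, merges these two steps into one.

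The monotonicity and the $L^2$ convergence of the grid averages are handled the same way as in the paper. One small correction there: the grids for different $N$ are not nested, so martingale convergence does not apply directly, and the density-plus-contractivity argument is the one that works.
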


\begin{proof}
Since $\mu$ has no atom, $Q$ is (strictly) increasing, hence so are its averages over consecutive intervals, and $\mathbf{y}_N$ is increasing. Since $Q\in L^2(0,1)$ and the averages of an $L^2$ function over a  grid of mesh tending to zero converge to it in $L^2$, we have $\sum_i\int_{I_i}(y_i-Q)^2\to0$; since the left-hand side is $W_2(\mathbf{i}_N(\mathbf{y}_N),\mu)^2$, this gives $\mathbf{y}_N\overset{S}{\rightharpoonup}\mu$. Together with the $\Gamma$-liminf, this makes $\mathbf{y}_N$ a recovery sequence once \eqref{recovery} is proved.

We turn to the proof of \eqref{recovery}. Splitting the sum according to the index gap, we have
\be\label{splitk}
F_N(\mathbf{y}_N)=\frac{1}{N^{\s+1}Z_N(\s)}\sum_{k=1}^{N-1}\sum_{i=1}^{N-k}\frac{1}{(y_{i+k}-y_i)^\s}
=\frac{1}{Z_N(\s)}\sum_{k=1}^{N-1}\ \frac1N\sum_{i=1}^{N-k}\(N(y_{i+k}-y_i)\)^{-\s} .
\ee
Since the weights $k^{-\s}/Z_N(\s)$ sum exactly to $1$ by the definition \eqref{defZN}, the bound \eqref{recovery} follows from the scale-by-scale estimate
\be\label{perscale}
\frac1N\sum_{i=1}^{N-k}\(N(y_{i+k}-y_i)\)^{-\s}\ \le\ \frac{1}{k^\s}\int_{\R}\rho^{\s+1}\,dx ,
\qquad k=1,\dots,N-1 ,
\ee
which we now prove. Set $h:=k/N$. By \eqref{defyN}, for $i\le N-k$ (so that $I_i\subset(0,1-h)$),
\[
N(y_{i+k}-y_i)=N\int_{I_i}N\big(Q(r+h)-Q(r)\big)\,dr ,
\]
so   Jensen's inequality gives
\[
\(N(y_{i+k}-y_i)\)^{-\s}\le N\int_{I_i}\Big(N\big(Q(r+h)-Q(r)\big)\Big)^{-\s}dr ,
\]
and summing over $i\le N-k$, the intervals $I_i$ being disjoint and contained in $(0,1-h)$,
\be\label{step1}
\frac1N\sum_{i=1}^{N-k}\(N(y_{i+k}-y_i)\)^{-\s}\le\frac{1}{N^\s}\int_0^{1-h}\big(Q(r+h)-Q(r)\big)^{-\s}dr .
\ee
Next, $Q$ being nondecreasing, its distributional derivative is a nonnegative measure whose absolutely continuous part $Q'$ satisfies $Q(r+h)-Q(r)\ge\int_r^{r+h}Q'(u)\,du$. Moreover $Q'(u)=1/\rho(Q(u))$ for a.e.\ $u\in(0,1)$, with $0<Q'<\infty$ a.e. (recall $\mu$ has no atom, and $\rho<\infty$ a.e.). Applying Jensen's inequality once more, we get for a.e.\ $r$
\[
\big(Q(r+h)-Q(r)\big)^{-\s}\le h^{-\s}\(\frac1h\int_r^{r+h}Q'(u)\,du\)^{-\s}\le h^{-\s}\,\frac1h\int_r^{r+h}Q'(u)^{-\s}\,du .
\]
Integrating in $r\in(0,1-h)$ and exchanging the order of integration, \[
\int_0^{1-h}\big(Q(r+h)-Q(r)\big)^{-\s}dr\ \le\ h^{-\s}\int_0^1Q'(u)^{-\s}\,du\ =\ \frac{N^\s}{k^\s}\int_0^1\rho\big(Q(u)\big)^\s\,du\ =\ \frac{N^\s}{k^\s}\int_{\R}\rho^{\s+1}\,dx ,
\] where we used that $Q$ pushes the Lebesgue measure of $(0,1)$ onto $\mu$.  Inserting this into \eqref{step1} proves \eqref{perscale} and the proposition.
\end{proof}
The second-moment assumption in Proposition~\ref{prop:recovery} is only used for the $S$-convergence: the bound \eqref{recovery} itself only requires $\int_\R\rho^{\s+1}dx<\infty$ and $\int_\R|x|\rho\,dx<\infty$. Moreover, if $\mu\in\mathcal P_2(\R)$ and $\mathcal F(\mu)=+\infty$, the $\Gamma$-limsup holds trivially, an $S$-converging sequence being provided by the same quantile averages.

\section{Proof of the main theorem}\label{sec:conclusions}
\subsection{Well-posedness of the ODE system}

Let us check that the gradient flow  at the $N$ level is well defined and global in time. We denote by
\be\label{coneCN}
\mathcal C_N:=\{\mathbf{z}_N\in\R^N:\ z_1<\dots<z_N\}
\ee
the open convex cone of ordered configurations, on which $F_N$ is finite, smooth and convex.

\begin{lem}[Global existence]\label{lem:globalexistence}
For every $\mathbf{x}_N(0)\in\mathcal C_N$, the system \eqref{maineq} has a unique maximal solution; it remains in $\mathcal C_N$ and is global in time.
\end{lem}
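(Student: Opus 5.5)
The plan is to combine Cauchy--Lipschitz with two a priori bounds supplied by the gradient-flow structure: monotonicity of $F_N$ keeps the solution inside $\mathcal C_N$, and conservation of the center of mass keeps it bounded.

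\emph{Local existence and uniqueness.} On the open set $\mathcal C_N$ the right-hand side of \eqref{maineq} is smooth, since every denominator $(x_k-x_j)^{\s+1}$ is bounded away from zero on compact subsets of $\mathcal C_N$. It is therefore locally Lipschitz there. Cauchy--Lipschitz then gives a unique maximal solution $\mathbf{x}_N\in C^1([0,T^*);\mathcal C_N)$ for some $T^*\in(0,\infty]$.

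\emph{Energy bound.} By \eqref{gf1},
\[
\frac{d}{dt}F_N(\mathbf{x}_N(t))=-\|\nab_{X_N}F_N(\mathbf{x}_N(t))\|_{X_N}^2\le0,
\]
so $F_N(\mathbf{x}_N(t))\le F_N(\mathbf{x}_N(0))<\infty$ on $[0,T^*)$. Each term of $F_N$ is nonnegative, so for all $i\ne j$,
\[
|x_i(t)-x_j(t)|\ \ge\ \delta:=\big(2N^{\s+1}Z_N(\s)F_N(\mathbf{x}_N(0))\big)^{-1/\s}>0 .
\]
By continuity the ordering is preserved, so the solution stays in the closed set $\{z_{k+1}-z_k\ge\delta\ \forall k\}\subset\mathcal C_N$.

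\emph{Position bound.} The interaction is antisymmetric in $(k,j)$, so $\sum_k \dot x_k=0$ and the center of mass is conserved. The spread is also bounded. In the ordered configuration, $\dot x_N>0$ because all forces on the rightmost particle push it right, and $|\dot x_N|\le \s N^{1-\s}Z_N(\s)^{-1}\delta^{-\s-1}$; the same bound holds for every particle. Hence $|x_k(t)|\le |x_k(0)|+Ct$, with $C$ depending only on $N$ and the initial energy.

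\emph{Conclusion.} Suppose $T^*<\infty$. Then on $[0,T^*)$ the trajectory lies in
\[
K:=\{\mathbf{z}_N:\ z_{k+1}-z_k\ge\delta,\ |z_k|\le R\},
\]
with $R=\max_k|x_k(0)|+CT^*$. This set $K$ is a compact subset of $\mathcal C_N$, which contradicts the blow-up alternative for maximal solutions, since a maximal solution must leave every compact subset of the domain as $t\to T^*$. Therefore $T^*=\infty$. The only mildly delicate point is the energy-to-gap bound, which relies on every pair term of $F_N$ being nonnegative, so that finite energy controls each gap individually. The argument has no real obstacle.
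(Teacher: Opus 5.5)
Your proof is correct and has the same structure as the paper's: Cauchy--Lipschitz on $\mathcal C_N$, a gap lower bound from the monotonicity of $F_N$, a bound on the positions, and the blow-up alternative. The only difference is how you bound the positions.

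The paper uses the virial identity
\[
\frac{d}{dt}\frac1N\sum_i x_i^2 = 2\s F_N(\mathbf{x}_N)\le 2\s F_N(\mathbf{x}_N(0)).
\]
You instead bound each velocity by $\s N^{1-\s}Z_N(\s)^{-1}\delta^{-\s-1}$ using the gap bound. Your route is more elementary and is enough at fixed $N$. However, the constant degenerates as $N\to\infty$, because $\delta$ does. The paper's second-moment bound is uniform in $N$ under (H2), and it is reused later as the a priori estimate on second moments in the convergence proof.

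Your remarks on conservation of the center of mass and on the sign of $\dot x_N$ are true but not needed.
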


\begin{proof}
The vector field in \eqref{maineq} is smooth on the open set $\mathcal C_N$, so the Cauchy--Lipschitz theorem provides a unique maximal solution, defined on some interval $[0,T_{\max})$ and taking values in $\mathcal C_N$. On that interval, \eqref{gf1} gives $\frac{d}{dt}F_N(\mathbf{x}_N(t))=-\|\nab_{X_N}F_N(\mathbf{x}_N(t))\|_{X_N}^2\le0$, hence
\be\label{apriorienergy}
F_N\big(\mathbf{x}_N(t)\big)\le F_N\big(\mathbf{x}_N(0)\big)=:C\ee so that \be\label{apriorigaps} |x_i(t)-x_j(t)|\ \ge\ \big(2N^{\s+1}Z_N(\s)\,C\big)^{-1/\s}=:\delta_0>0
\ee
for all $i\ne j$ and all $t<T_{\max}$. In particular the gaps cannot vanish and the order is preserved. Moreover, symmetrizing the double sum,
\be\label{secondmomentbis}
\frac{d}{dt}\ \frac1N\sum_ix_i^2=\frac2N\sum_ix_i\dot x_i=\frac{\s}{N^{\s+1}Z_N(\s)}\sum_{i\ne j}\frac{(x_i-x_j)\,\mathrm{sgn}(x_i-x_j)}{|x_i-x_j|^{\s+1}}=2\s\,F_N\big(\mathbf{x}_N\big)\ \le\ 2\s C,
\ee
so the positions remain in a bounded set on every bounded time interval. If $T_{\max}$ were finite, the solution would have to leave every compact subset of $\mathcal C_N$ as $t\to T_{\max}$, which the two bounds just obtained forbid. Hence $T_{\max}=+\infty$.
\end{proof}

\subsection{Using convexity to prove no excess energy}\label{sec:noexcess}

We now show that no excess energy can occur for configurations with uniformly bounded slope thanks to the convexity of $F_N$. Convexity yields a gradient inequality against any competitor, and Proposition \ref{prop:recovery} provides a competitor whose energy is bounded by $\mathcal F(\mu)$ at every finite $N$ and which is at $W_2$-distance at most $W_2(\mathbf{i}_N(\mathbf{x}_N),\rho\,dx)$ from the given configuration. 

\begin{prop}\label{prop:noexcess} For every probability measure $\mu=\rho\,dx$ on $\R$ with finite second moment and every $\mathbf{x}_N \in \R^N$ we have
\be \label{noexcess} F_N(\mathbf{x}_N) - \mathcal F (\mu) \le \|\nab_{X_N}F_N (\mathbf{x}_N)\|_{X_N}\, W_2(\mathbf{i}_N(\mathbf{x}_N), \mu).
\ee
\end{prop}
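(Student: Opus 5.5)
The plan is to combine the convexity of $F_N$ on the cone $\mathcal C_N$ with the recovery configuration of Proposition~\ref{prop:recovery}. First I dispose of trivial cases. If $\mathcal F(\mu)=+\infty$, the left-hand side is $-\infty$ and there is nothing to prove. If two coordinates of $\mathbf{x}_N$ coincide, then $F_N(\mathbf{x}_N)=+\infty$; I would adopt the convention that the slope is then $+\infty$ as well, so that the right-hand side is $+\infty$ (or simply note that the statement concerns configurations of finite energy). Otherwise the coordinates are distinct, and since $F_N$ is symmetric under permutations, as are $\mathbf{i}_N$ and the $X_N$ norm of the gradient, I may relabel so that $\mathbf{x}_N\in\mathcal C_N$.

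Next I take $\mathbf{y}_N$ as in \eqref{defyN}. The assumption $\int\rho^{\s+1}<\infty$ holds since $\mathcal F(\mu)<\infty$, and $\mu$ has a finite second moment. By Proposition~\ref{prop:recovery}, $\mathbf{y}_N\in\mathcal C_N$ and $F_N(\mathbf{y}_N)\le\mathcal F(\mu)$. Each term $|x_i-x_j|^{-\s}=(x_j-x_i)^{-\s}$ with $i<j$ is a convex function of a linear form on the convex cone $\mathcal C_N$, so $F_N$ is convex and smooth there. The gradient inequality then gives
\[
F_N(\mathbf{y}_N)\ge F_N(\mathbf{x}_N)+\langle\nab_{X_N}F_N(\mathbf{x}_N),\mathbf{y}_N-\mathbf{x}_N\rangle_{X_N},
\]
and hence, by Cauchy--Schwarz,
\[
F_N(\mathbf{x}_N)-\mathcal F(\mu)\le\|\nab_{X_N}F_N(\mathbf{x}_N)\|_{X_N}\,\|\mathbf{x}_N-\mathbf{y}_N\|_{X_N}.
\]

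It remains to show that $\|\mathbf{x}_N-\mathbf{y}_N\|_{X_N}\le W_2(\mathbf{i}_N(\mathbf{x}_N),\mu)$, which I expect to be the only point needing care. Since $\mathbf{x}_N$ is ordered, the quantile function of $\mathbf{i}_N(\mathbf{x}_N)$ equals $x_i$ on $I_i$. Therefore
\[
W_2^2\bigl(\mathbf{i}_N(\mathbf{x}_N),\mu\bigr)=\sum_{i=1}^N\int_{I_i}(x_i-Q(r))^2\,dr.
\]
On each $I_i$, Jensen's inequality (equivalently, the fact that the average $y_i$ is the $L^2(I_i)$ projection of $Q$ onto constants) gives
\[
\int_{I_i}(x_i-Q)^2\ge\frac1N(x_i-y_i)^2.
\]
Summing over $i$ yields $\|\mathbf{x}_N-\mathbf{y}_N\|_{X_N}^2\le W_2^2$. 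Inserting this bound into the previous display proves \eqref{noexcess}.
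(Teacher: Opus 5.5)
Your proposal is correct and follows the paper's proof essentially step by step. Like the paper, you apply the convexity gradient inequality on $\mathcal C_N$ against the quantile-average competitor $\mathbf{y}_N$ of Proposition~\ref{prop:recovery}, then use Cauchy--Schwarz, and bound $\|\mathbf{x}_N-\mathbf{y}_N\|_{X_N}\le W_2(\mathbf{i}_N(\mathbf{x}_N),\mu)$ by the per-interval Jensen inequality in the quantile representation; your treatment of the trivial cases matches the paper's conventions.
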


\begin{proof}
If $\mathcal F(\mu)=+\infty$, or if two coordinates of $\mathbf{x}_N$ coincide (in which case $\nab_{X_N}F_N$ is undefined and we interpret the right-hand side as $+\infty$), there is nothing to prove. Since $F_N$ and $\mathbf{i}_N$ are symmetric under permutations of the coordinates, we may thus assume $x_1<\dots<x_N$.

Since  $F_N$ is convex on $\mathcal C_N$ of \eqref{coneCN},  for every $\mathbf{z}_N\in\mathcal C_N$,
\[
F_N(\mathbf{z}_N)\ \ge\ F_N(\mathbf{x}_N)+\langle \nab_{X_N}F_N(\mathbf{x}_N),\mathbf{z}_N-\mathbf{x}_N\rangle_{X_N}.
\] 
By the Cauchy--Schwarz inequality,
\be\label{gradineq}
F_N(\mathbf{x}_N)\ \le\ F_N(\mathbf{z}_N)+\|\nab_{X_N}F_N(\mathbf{x}_N)\|_{X_N}\,\|\mathbf{x}_N-\mathbf{z}_N\|_{X_N}\qquad\text{for every }\mathbf{z}_N\in\mathcal C_N .
\ee

We take for $\mathbf{z}_N$ the configuration $\mathbf{y}_N$ of \eqref{defyN}, which belongs to $\mathcal C_N$ and satisfies $F_N(\mathbf{y}_N)\le\mathcal F(\mu)$ by Proposition \ref{prop:recovery}.

There remains to estimate the distance to the competitor. We recall that, $I_i:=\big(\tfrac{i-1}N,\tfrac iN\big]$ denoting the $i$-th grid interval of $(0,1]$, when $x_1<\dots<x_N$ the quantile function of $\mathbf{i}_N(\mathbf{x}_N)$ is equal to $x_i$ on $I_i$, so that
\be\label{W2quantile}
W_2\big(\mathbf{i}_N(\mathbf{x}_N),\rho\,dx\big)^2=\sum_{i=1}^N\int_{I_i}\big(x_i-Q(r)\big)^2\,dr .
\ee

For each $i$, the Cauchy--Schwarz inequality and \eqref{defyN} give
\[
(x_i-y_i)^2=\(N\int_{I_i}\big(x_i-Q(r)\big)dr\)^2\le N\int_{I_i}\big(x_i-Q(r)\big)^2dr .
\]
Summing over $i$ and using \eqref{W2quantile},
\[
\|\mathbf{x}_N-\mathbf{y}_N\|_{X_N}^2=\frac1N\sum_{i=1}^N(x_i-y_i)^2\le\sum_{i=1}^N\int_{I_i}\big(x_i-Q(r)\big)^2dr=W_2\big(\mathbf{i}_N(\mathbf{x}_N),\rho\,dx\big)^2 .
\]
Inserting into \eqref{gradineq} proves \eqref{noexcess}.
\end{proof}
In view of the $\Gamma$-liminf result (Proposition \ref{prop:gammaliminf} applied with $\psi=\id$) it follows that 
\begin{coro}[No excess energy]\label{coro:noexcess}
If $\mathbf{x}_N\overset{S}{\rightharpoonup}\mu=\rho\,dx$ and $\sup_N\|\nab_{X_N}F_N(\mathbf{x}_N)\|_{X_N}<\infty$, then
\be\label{nee0}
\lim_{N\to\infty}F_N(\mathbf{x}_N)=\mathcal F(\mu) .
\ee
\end{coro}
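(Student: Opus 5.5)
The proof is a sandwich between the $\Gamma$-liminf and the no-excess inequality of Proposition~\ref{prop:noexcess}.

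\emph{Lower bound.} Since $\mathbf{x}_N\overset{S}{\rightharpoonup}\mu$, the empirical measures $\mathbf{i}_N(\mathbf{x}_N)$ converge to $\mu$ in $W_2$, hence narrowly. We apply Proposition~\ref{prop:gammaliminf} with $\psi=\id$. Then $\psi'\equiv1\ge c_0$, and $E_N=2F_N$ because $(x_i-x_j)^2/|x_i-x_j|^{\s+2}=|x_i-x_j|^{-\s}$. This gives
\[
\liminf_{N\to\infty}F_N(\mathbf{x}_N)\ge\int_\R\rho^{\s+1}\,dx=\mathcal F(\mu),
\]
with the usual convention that the right-hand side is $+\infty$ when $\rho\notin L^{\s+1}$.

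\emph{Upper bound.} Set $M:=\sup_N\|\nab_{X_N}F_N(\mathbf{x}_N)\|_{X_N}<\infty$. Since $\mathcal F(\mu)$ appears on the right of \eqref{noexcess}, we first rule out $\mathcal F(\mu)=+\infty$.

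Assume $\mathcal F(\mu)=+\infty$. The $\Gamma$-liminf then gives $F_N(\mathbf{x}_N)\to+\infty$. To reach a contradiction we use the virial identity: symmetrizing as in \eqref{secondmomentbis},
\[
\langle\nab_{X_N}F_N(\mathbf{x}_N),\mathbf{x}_N\rangle_{X_N}=-\s F_N(\mathbf{x}_N).
\]
By Cauchy--Schwarz,
\[
\s F_N(\mathbf{x}_N)\le M\,\|\mathbf{x}_N\|_{X_N}.
\]
Here $\|\mathbf{x}_N\|_{X_N}^2$ is the second moment of $\mathbf{i}_N(\mathbf{x}_N)$, which converges to the second moment of $\mu$ by $W_2$ convergence and is therefore bounded. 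So $F_N(\mathbf{x}_N)$ is bounded, the desired contradiction. Hence $\mathcal F(\mu)<\infty$. Note that the virial bound alone already gives $\sup_N F_N(\mathbf{x}_N)<\infty$ under the hypotheses.

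With $\mathcal F(\mu)<\infty$, Proposition~\ref{prop:noexcess} applies and yields
\[
F_N(\mathbf{x}_N)\le\mathcal F(\mu)+M\,W_2\big(\mathbf{i}_N(\mathbf{x}_N),\mu\big).
\]
The last term tends to $0$ by the definition of $S$-convergence. Therefore $\limsup_N F_N(\mathbf{x}_N)\le\mathcal F(\mu)$, and combined with the lower bound this proves \eqref{nee0}.

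\emph{Main obstacle.} The only delicate point is the case $\mathcal F(\mu)=+\infty$. There \eqref{noexcess} is vacuous, and one must still show the limit holds, i.e.\ that this case cannot occur. The virial identity, which comes from the homogeneity of the Riesz kernel, handles it together with the uniform second-moment bound supplied by $W_2$ convergence. Everything else is a direct combination of the $\Gamma$-liminf and the no-excess inequality.
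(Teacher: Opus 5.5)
Your proof is correct and is the same sandwich the paper uses. The lower bound comes from the $\Gamma$-liminf (Proposition~\ref{prop:gammaliminf} with $\psi=\id$), and the upper bound from Proposition~\ref{prop:noexcess} together with $W_2(\mathbf{i}_N(\mathbf{x}_N),\mu)\to0$. Your virial detour for the case $\mathcal F(\mu)=+\infty$ is valid but unnecessary: in that case the $\Gamma$-liminf alone already gives $F_N(\mathbf{x}_N)\to+\infty=\mathcal F(\mu)$, although your argument does yield the correct extra fact that $\mathcal F(\mu)<\infty$ under these hypotheses.
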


\subsection{A truncated version}\label{sec:trunc}

Proposition \ref{prop:noexcess} requires the full $W_2$ distance between $\mathbf{i}_N(\mathbf{x}_N)$ and $\rho\,dx$, i.e.\ exactly the $S$-convergence. Along the flow, however, the a priori estimates only provide a uniform bound on the second moments,
\be\label{secondmoment}
\frac{d}{dt}\ \frac1N\sum_{i=1}^N x_i(t)^2 = 2\s\,F_N(\mathbf{x}_N(t)),\quad\text{hence}\quad \sup_{N}\ \sup_{t\in[0,T]}\ \frac1N\sum_{i=1}^Nx_i(t)^2=:K_2<\infty
\ee as seen before. Now $\{\nu\in\mathcal P_2(\R):\int x^2d\nu\le K_2\}$ is compact for $W_1$ but \emph{not} for $W_2$: for instance, the measures $\nu_n:=(1-\frac1n)\delta_0+\frac1n\delta_{\sqrt{K_2n}}$ belong to this set and converge to $\delta_0$ in $W_1$, since $W_1(\nu_n,\delta_0)=\sqrt{K_2/n}$, whereas $W_2(\nu_n,\delta_0)=\sqrt{K_2}$ for every $n$. Compactness therefore yields narrow (equivalently $W_1$) convergence at each time, but not $S$-convergence, and Proposition~\ref{prop:noexcess} cannot be applied as such.

The following truncated variant fixes this. Its point is that the competitor need not resemble $\rho$ everywhere: the few particles lying outside a window $[-R,R]$ are left where they are, and merely spread out to have gaps $\frac1N$, so that they cost only their number, both in the distance and in the energy, and never their distance to the origin.

\begin{prop}\label{prop:noexcessR}
Let $\mathbf{x}_N\in\R^N$ with $\frac1N\sum_ix_i^2\le K_2$, let $\mu=\rho\,dx$ be a probability measure with $\mathcal F(\mu)<\infty$ and $\int x^2\rho\,dx\le K_2$, and let $R>0$ satisfy $R^2>2K_2$. Denote by $\mu^R$ and $\mu_N^R$ the probability measures obtained by conditioning $\rho\,dx$ and $\mathbf{i}_N(\mathbf{x}_N)$ to $[-R,R]$. Then
\be\label{noexcessR}
F_N(\mathbf{x}_N)-\mathcal F(\mu)\ \le\ \|\nab_{X_N}F_N(\mathbf{x}_N)\|_{X_N}\(W_2\big(\mu_N^R,\mu^R\big)+\frac{2\sqrt{K_2}}{R}\)+\eta_R\,\mathcal F(\mu)+\frac{2K_2}{R^2},
\ee
where $\eta_R:=\(1-\frac{K_2}{R^2}\)^{-(\s+1)}-1\to0$ as $R\to\infty$.
\end{prop}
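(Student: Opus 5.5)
The plan is to repeat the proof of Proposition~\ref{prop:noexcess}: apply the gradient inequality \eqref{gradineq}, which comes from the convexity of $F_N$ on $\mathcal C_N$, to a new competitor $\mathbf{z}_N\in\mathcal C_N$. This competitor coincides with a quantile recovery sequence only for the particles lying in $[-R,R]$. It then suffices to prove two bounds:
\[
\|\mathbf{x}_N-\mathbf{z}_N\|_{X_N}\le W_2(\mu_N^R,\mu^R)+\frac{2\sqrt{K_2}}R,
\qquad
F_N(\mathbf{z}_N)\le(1+\eta_R)\mathcal F(\mu)+\frac{2K_2}{R^2}.
\]
Throughout we assume $x_1<\dots<x_N$ and set $\theta:=K_2/R^2<\hal$. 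By Chebyshev's inequality, at most $\theta N$ particles lie outside $[-R,R]$, and $m:=\mu([-R,R])\ge1-\theta$. Write $x_1,\dots,x_a<-R$, $x_{a+1},\dots,x_{a+n}\in[-R,R]$ and $x_{a+n+1},\dots,x_N>R$, so that $(N-n)/N\le\theta$.

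\emph{Construction of $\mathbf{z}_N$.} For the inside indices, let $Q^R$ be the quantile function of $\mu^R$ and set $z_{a+j}:=n\int_{((j-1)/n,j/n]}Q^R$ for $j=1,\dots,n$. These are the points $\mathbf{y}_n$ of \eqref{defyN} built for $\mu^R$ with $n$ particles, and they lie in $[-R,R]$. For the outside indices we keep the particles essentially in place, but push them off the window by a fixed margin $c$ (say $c=1$, to be tuned) and spread them so that consecutive gaps are at least $\frac1N$:
\[
z_i:=\min(x_i,-R)-c-\frac{a+1-i}{N}\quad(i\le a),
\]
and symmetrically on the right. Then $\mathbf{z}_N\in\mathcal C_N$.

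\emph{Distance.} For the inside block, exactly as in the proof of Proposition~\ref{prop:noexcess}, Cauchy--Schwarz on each grid cell gives
\[
\frac1N\sum_{\rm in}(x_i-z_i)^2\le\frac nN\,W_2(\mu_N^R,\mu^R)^2\le W_2(\mu_N^R,\mu^R)^2,
\]
since the quantile function of $\mu_N^R$ equals $x_{a+j}$ on the $j$-th cell of mesh $\frac1n$. For the outside particles, $|x_i-z_i|\le c+\theta$, so their contribution is at most $\theta(c+\theta)^2$. Its square root is $O(\sqrt\theta)=O(\sqrt{K_2}/R)$, and the triangle inequality in $\R^N$ gives the first bound.

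\emph{Energy.} Split the pairs into three classes.

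\emph{Inside--inside pairs.} By the scale-by-scale estimate \eqref{perscale} at $n$ particles, and since $k^{-\s}$ summed over $k\le n-1$ gives $Z_n$, this part is at most
\[
\Big(\frac nN\Big)^{\s+1}\frac{Z_n(\s)}{Z_N(\s)}\,\mathcal F(\mu^R)\le\mathcal F(\mu^R)\le m^{-(\s+1)}\mathcal F(\mu)\le(1+\eta_R)\mathcal F(\mu).
\]
Here we use $n\le N$, $Z_n\le Z_N$, and the identity $\mathcal F(\mu^R)=m^{-(\s+1)}\int_{-R}^R\rho^{\s+1}$.

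\emph{Outside--outside pairs on the same side.} The index gap $k$ gives distance at least $k/N$, so this part is at most $\frac{2}{N^{\s+1}Z_N}\cdot(N-n)\,N^\s Z_N\le2\theta$.

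\emph{Mixed pairs.} Inside--outside pairs, and outside pairs on opposite sides, are at distance at least $c$. They contribute at most $\frac{(N-n)N}{N^{\s+1}Z_N}c^{-\s}\le\theta c^{-\s}N^{1-\s}/Z_N$, which is $\le\theta c^{-\s}$ even when $\s=1$.

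\emph{Main obstacle.} The genuine difficulty is this last class of pairs. If the outside particles were only spread at scale $\frac1N$ next to $\pm R$, they could sit within $O(1/N)$ of a cluster of inside recovery points, and the mixed energy would be of order $\theta N$. That is why I insert the macroscopic margin $c$. It costs only $O(\sqrt\theta)$ in the distance, precisely the $\sqrt{K_2}/R$ allowed by \eqref{noexcessR}. Matching the exact constants $2\sqrt{K_2}/R$ and $2K_2/R^2$ will require tuning $c$ and the spreading. If this fails, I would first try replacing $c$ by $R$-dependent margins, or rescaling the outside spread.

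Finally, inserting both bounds into \eqref{gradineq} yields \eqref{noexcessR}.
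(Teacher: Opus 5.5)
Your proposal is correct and follows the paper's proof essentially step by step. It uses the same competitor (the recovery points of Proposition~\ref{prop:recovery} for $\mu^R$ with $n$ points on the bulk block, and the far particles pushed out by a unit margin and spread at scale $\frac1N$), the same cellwise Cauchy--Schwarz bound for the distance, and the same three-way split of the energy.

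No tuning of $c$ is needed; take $c=1$. Each far particle then moves by at most $1+\theta\le\frac32\le2$, which gives the distance term $\frac{2\sqrt{K_2}}{R}$.

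Your same-side far--far count has a spurious factor $2$. Because of the $\frac12$ in $F_N$, that contribution is at most $\frac{N-n}{N}\le\theta$, not $2\theta$. Adding the mixed term $\theta c^{-\s}=\theta$ gives exactly $\frac{2K_2}{R^2}$.
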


\begin{proof}
As before we may assume $x_1<\dots<x_N$ and that the right-hand side is finite. By the ordering, the indices of the particles lying in $[-R,R]$ form a contiguous bulk block $\{a,\dots,b\}$, of cardinality $N_b$; let $m:=N-N_b$ be the number of the remaining, far particles, so that by the Chebyshev inequality
\be\label{cheb}
m\ \le\ \frac{N}{R^2}\ \frac1N\sum_ix_i^2\ \le\ \frac{NK_2}{R^2}\ <\ \frac N2 .
\ee

\emph{Step 1: The competitor.} Let $Q^R$ be the quantile function of $\mu^R$ and let
\[
z_{a+i-1}:=N_b\int_{\frac{i-1}{N_b}}^{\frac{i}{N_b}}Q^R(r)\,dr,\qquad i=1,\dots,N_b,
\]
be the configuration of Proposition \ref{prop:recovery} for the measure $\mu^R$ and the number of points $N_b$; it is increasing and valued in $[-R,R]$. For the far particles on the right we set $z_{b+1}:=\max(x_{b+1},R+1)$ and, recursively, $z_j:=\max\(x_j,z_{j-1}+\frac1N\)$ for $j=b+2,\dots,N$, and symmetrically on the left, pushing outwards. Then $\mathbf{z}_N\in\mathcal C_N$; the far particles have mutual gaps at least $\frac1N$ and lie at distance at least $1$ from $[-R,R]$; and, by induction and \eqref{cheb}, each of them has been moved outwards by at most $1+\frac mN\le2$.

\emph{Step 2: Distance.} In the bulk, the quantile function of $\mu_N^R$ equals $x_{a+i-1}$ on the $i$-th of the $N_b$ equal subintervals of $(0,1]$, so exactly as in the proof of Proposition \ref{prop:noexcess}, applied with $N_b$ in place of $N$,
\[
\frac{1}{N}\sum_{i=a}^{b}(x_i-z_i)^2\ \le\ \frac{N_b}{N}\ W_2\big(\mu_N^R,\mu^R\big)^2\ \le\ W_2\big(\mu_N^R,\mu^R\big)^2 .
\]
For the far particles, $|x_j-z_j|\le2$ and there are $m$ of them, so $\frac1N\sum_{\text{far}}(x_j-z_j)^2\le\frac{4m}{N}\le\frac{4K_2}{R^2}$ by \eqref{cheb}. Hence
\[
\|\mathbf{x}_N-\mathbf{z}_N\|_{X_N}\ \le\ W_2\big(\mu_N^R,\mu^R\big)+\frac{2\sqrt{K_2}}{R} .
\]

\emph{Step 3: Energy of the competitor.} We split the pairs of $F_N(\mathbf{z}_N)$ into bulk--bulk, far--far and bulk--far pairs.

The bulk--bulk pairs give, by scaling and Proposition \ref{prop:recovery} applied to $\mu^R$ with $N_b$ points,
\[
\frac{1}{N^{\s+1}Z_N(\s)}\sum_{a\le i<j\le b}(z_j-z_i)^{-\s}=\frac{N_b^{\s+1}Z_{N_b}(\s)}{N^{\s+1}Z_N(\s)}\ F_{N_b}\big((z_i)_{i=a}^b\big)\ \le\ \mathcal F(\mu^R)\ \le\ (1+\eta_R)\,\mathcal F(\rho),
\]
where we used $N_b\le N$, that $Z_N(\s)$ is nondecreasing in $N$, and that $\mu^R$ has density $\rho\mathbf 1_{[-R,R]}/\rho\,dx([-R,R])$ with $\rho\,dx([-R,R])\ge1-\frac{K_2}{R^2}$ by the Chebyshev inequality for $\rho\,dx$.

For the far--far pairs, each far particle has, on either side, partners at distance at least $\frac rN$, $r=1,\dots,m$, so its interaction sum is at most $2\sum_{r=1}^m\(\frac Nr\)^\s\le2N^{\s}Z_N(\s)$, hence a total contribution at most $\frac{1}{2N^{\s+1}Z_N(\s)}\ m\cdot2N^{\s}Z_N(\s)=\frac mN\le\frac{K_2}{R^2}$.

For the bulk--far pairs, each of the $m$ far particles is at distance at least $1$ from each of the at most $N$ bulk particles, so their total contribution is at most $\frac{mN}{N^{\s+1}Z_N(\s)}\le\frac mN\le\frac{K_2}{R^2}$, using $Z_N(\s)\ge1$ and $N^{\s}\ge N$.

Altogether $F_N(\mathbf{z}_N)\le(1+\eta_R)\mathcal F(\mu)+\frac{2K_2}{R^2}$. Inserting this and Step 2 into \eqref{gradineq} proves \eqref{noexcessR}.
\end{proof}

\begin{coro}[No excess energy, in the time-averaged sense]\label{coro:noexcessR}
Let $\mathbf{x}_N(t)$ solve \eqref{gf1} on $[0,T]$ and assume that
\be\label{hypcoro}
\sup_N\sup_{t\in[0,T]}\frac1N\sum_{i=1}^Nx_i(t)^2\le K_2,
\qquad
\sup_N\int_0^T\|\nab_{X_N}F_N(\mathbf{x}_N(t))\|_{X_N}^2\,dt\le C_0,
\ee
and that $\mathbf{i}_N(\mathbf{x}_N(t))\rightharpoonup\mu(t)=\rho(t)\,dx$ narrowly for every $t\in[0,T]$, with $\int x^2d\mu(t)\le K_2$ and $\mathcal F(\mu(t))\le C_0$. Then
\be\label{L1nee}
\lim_{N\to\infty}\int_0^T\Big(F_N\big(\mathbf{x}_N(t)\big)-\mathcal F\big(\mu(t)\big)\Big)_+\,dt=0 ,
\ee
and consequently $F_N(\mathbf{x}_N(\cdot))\to\mathcal F(\mu(\cdot))$ in $L^1(0,T)$.
\end{coro}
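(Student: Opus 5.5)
We will apply Proposition~\ref{prop:noexcessR} pointwise in time and integrate. Fix $R$ with $R^2>2K_2$. For each $t\in[0,T]$, \eqref{noexcessR} applied to $\mathbf{x}_N(t)$ and $\mu(t)$ gives
\[
\Big(F_N(\mathbf{x}_N(t))-\mathcal F(\mu(t))\Big)_+\le \|\nab_{X_N}F_N(\mathbf{x}_N(t))\|_{X_N}\Big(W_2\big(\mu_N^R(t),\mu^R(t)\big)+\frac{2\sqrt{K_2}}{R}\Big)+\eta_R C_0+\frac{2K_2}{R^2}.
\]
The right-hand side is nonnegative, so the positive part is harmless. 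We integrate over $[0,T]$ and use Cauchy--Schwarz in time together with \eqref{hypcoro}. This bounds $\int_0^T(F_N-\mathcal F)_+\,dt$ by
\[
\sqrt{C_0}\Big(\int_0^T W_2\big(\mu_N^R(t),\mu^R(t)\big)^2dt\Big)^{1/2}+\frac{2\sqrt{K_2C_0T}}{R}+T\eta_RC_0+\frac{2K_2T}{R^2}.
\]
If we show that the first term tends to $0$ as $N\to\infty$ for fixed $R$, then letting $N\to\infty$ and afterwards $R\to\infty$ gives \eqref{L1nee}.

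\emph{Main step: convergence of the conditioned measures.} To choose $R$ suitably, we note that the set of $R$ with $\mu(t)(\{\pm R\})>0$ is countable for each fixed $t$, since $\mu(t)$ is absolutely continuous. In fact $\mu(t)(\{\pm R\})=0$ for every $R$ and $t$, so no restriction on $R$ is needed. Narrow convergence $\mathbf{i}_N(\mathbf{x}_N(t))\rightharpoonup\mu(t)$, together with $\mu(t)(\partial[-R,R])=0$, then gives two things. First, $\mu_N(t)([-R,R])\to\mu(t)([-R,R])\ge 1-K_2/R^2>0$. Second, the restrictions converge narrowly, by the portmanteau theorem. Hence $\mu_N^R(t)\rightharpoonup\mu^R(t)$ narrowly. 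All of these measures are supported in $[-R,R]$, and on a compact set narrow convergence is equivalent to $W_2$ convergence. Thus $W_2(\mu_N^R(t),\mu^R(t))\to0$ for every $t$. Moreover $W_2(\mu_N^R(t),\mu^R(t))\le 2R$ uniformly. Dominated convergence on $[0,T]$ therefore gives $\int_0^TW_2(\mu_N^R,\mu^R)^2\,dt\to0$. Measurability in $t$ is clear, since $\mathbf{x}_N$ is continuous and $\mu(t)$ is a pointwise narrow limit. This is the step that requires care. The pointwise-in-time narrow convergence hypothesis is exactly what is needed here: working with the truncation avoids the need for any $W_2$ convergence of the full measures, and boundedness of the window gives domination for free.

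\emph{Consequence.} For the $L^1$ convergence we also need the negative part. By Proposition~\ref{prop:gammaliminf} with $\psi=\id$ (narrow convergence suffices there), $\liminf_N F_N(\mathbf{x}_N(t))\ge\mathcal F(\mu(t))$ for each $t$. Hence $(F_N(\mathbf{x}_N(t))-\mathcal F(\mu(t)))_-\to0$ pointwise. This negative part is bounded by $\mathcal F(\mu(t))\le C_0$, so dominated convergence gives $\int_0^T(F_N-\mathcal F)_-\,dt\to0$. Combining this with \eqref{L1nee} yields $F_N(\mathbf{x}_N(\cdot))\to\mathcal F(\mu(\cdot))$ in $L^1(0,T)$.
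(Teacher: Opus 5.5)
Your proof is correct and follows the paper's argument essentially step for step. It applies Proposition~\ref{prop:noexcessR} pointwise in time, gets $W_2$ convergence of the conditioned measures from narrow convergence on the compact window $[-R,R]$, uses Cauchy--Schwarz in time with dominated convergence and then lets $R\to\infty$, and finishes with the $\Gamma$-liminf plus dominated convergence for the negative parts. The only cosmetic difference is that you split off the $2\sqrt{K_2}/R$ term by Minkowski's inequality before applying dominated convergence, whereas the paper applies dominated convergence to the whole squared bracket.
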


\begin{proof}
Fix $R$ with $R^2>2K_2$. For every $t$, since $\mu(t)$ is absolutely continuous, $\mu(t)(\{\pm R\})=0$, so the narrow convergence $\mathbf{i}_N(\mathbf{x}_N(t))\rightharpoonup\mu(t)$ implies the narrow convergence of the restrictions to $[-R,R]$, hence of the conditioned measures; these being supported in the compact $[-R,R]$, narrow convergence implies $W_2$ convergence, so $W_2(\mu_N^R(t),\mu^R(t))\to0$. Applying Proposition~\ref{prop:noexcessR} at each such $t$ and the Cauchy--Schwarz inequality in time,
\[
\int_0^T\Big(F_N-\mathcal F(\mu(t))\Big)_+dt\le\sqrt{C_0}\(\int_0^T\Big[W_2\big(\mu_N^R(t),\mu^R(t)\big)+\frac{2\sqrt{K_2}}{R}\Big]^2dt\)^{\hal}+T\(\eta_RC_0+\frac{2K_2}{R^2}\).
\]
The integrand of the first term in the rhs is bounded by $\(2R+\frac{2\sqrt{K_2}}{R}\)^2$ and converges for a.e.\ $t$ to $\frac{4K_2}{R^2}$, so by dominated convergence the  limsup in $N$  of the first term is at most $\frac{2\sqrt{C_0K_2T}}{R}$. Letting then $R\to\infty$ gives \eqref{L1nee}. The $L^1$ convergence follows by combining with the $\Gamma$-liminf (Proposition \ref{prop:gammaliminf} with $\psi=\id$) and dominated convergence applied to the negative parts, which are bounded by $C_0$ and tend to $0$ pointwise.
\end{proof}

\begin{remark}\label{rmk:aposteriori}
Escape of mass to infinity is thus  ruled out a posteriori, once the limit has been identified as the gradient flow of $\mathcal F$: the identity \eqref{secondmoment} and the $L^1$ convergence of the energies give, for every $t$,
\[
\frac1N\sum_ix_i(t)^2=\frac1N\sum_ix_i(0)^2+2\s\int_0^tF_N\ \longrightarrow\ \int x^2\rho_0\,dx+2\s\int_0^t\mathcal F(\mu(s))\,ds=\int x^2 d\mu(t),
\]
the last equality being the corresponding identity for the limiting flow. Narrow convergence together with convergence of the second moments is precisely $W_2$, i.e.\ $S$-convergence, at every time. This is also what upgrades the conclusion of the main theorem from $W_1$ to $W_2$.
\end{remark}

\subsection{The slope inequality}\label{sec:slope}
To prove the slope inequality, we use the constructive method proposed in \cite{SS}, i.e.\ we lift a variation at the level of $\mu\in \mathcal{P}_2$ to one at the level of $\R^N$. More precisely, let $\psi$ be a $C^1$ function on $\R$ with bounded derivative. Writing $\psi(\mathbf{x}_N):=(\psi(x_1),\dots,\psi(x_N))$, we consider the variation
\be\label{variation}
\theta\mapsto\mathbf{x}_N+\theta\,\psi(\mathbf{x}_N),\qquad\theta\in\R ,
\ee
and use that, by definition of the gradient in \eqref{gf1} and the Cauchy--Schwarz inequality,
\be\label{cs2}
\frac{d}{d\theta} \Big|_{\theta=0} F_N(\mathbf{x}_N+\theta\,\psi(\mathbf{x}_N))= \langle  \nab_{X_N} F_N(\mathbf{x}_N), \psi(\mathbf{x}_N) \rangle_{X_N}\ \ge\ - \|\nab_{X_N} F_N(\mathbf{x}_N) \|_{X_N } \|\psi(\mathbf{x}_N)\|_{X_N} ,
\ee
so that an upper bound on the left-hand side yields a lower bound on the slope. From \eqref{defF_N}, after symmetrization, the left-hand side is computed to be
\be\label{variationformula}
\frac{d}{d\theta} \Big|_{\theta=0} F_N(\mathbf{x}_N+\theta\,\psi(\mathbf{x}_N))=-\frac{\s}{2N^{\s+1}Z_N(\s)} \sum_{i\neq j} \frac{(\psi(x_i)-\psi(x_j))(x_i-x_j)}{|x_i-x_j|^{\s+2}} = -\frac{\s}{2}E_N(\mathbf{x}_N),
\ee
with $E_N$ is the functional of  \eqref{defE} associated to $\psi$.
\begin{prop}\label{prop:slope}
Let $\mathbf{x}_N\in\R^N$ be such that $\mathbf{i}_N(\mathbf{x}_N)\rightharpoonup\mu$ narrowly and $F_N(\mathbf{x}_N)\to\mathcal F(\mu)<\infty$. Then $\mu=\rho\,dx$ with $\rho\in L^{\s+1}(\R)$ and
 \be \label{eq:slopeineq}\liminf_{N\to \infty} \|\nab_{X_N} F_N(\mathbf{x}_N)\|_{X_N} \ge  |\partial \mathcal F|(\mu).\ee
In particular \eqref{eq:slopeineq} holds whenever $\mathbf{x}_N \overset{S}{\rightharpoonup} \mu$ and $\sup_N\|\nab_{X_N} F_N(\mathbf{x}_N)\|_{X_N}<\infty$, by Corollary~\ref{coro:noexcess}.
\end{prop}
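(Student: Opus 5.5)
The plan is to combine the lifted variation \eqref{variation}--\eqref{variationformula} with the $\Gamma$-liminf of Proposition~\ref{prop:gammaliminf}, and then to recover the slope through the dual formula \eqref{slopedual}.

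\emph{Regularity of $\mu$.} Since $F_N(\mathbf{x}_N)\to\mathcal F(\mu)<\infty$, the energies are bounded. Proposition~\ref{prop:gammaliminf} with $\psi=\id$ (for which $E_N=2F_N$) then shows that $\mu=\rho\,dx$, and $\mathcal F(\mu)<\infty$ gives $\rho\in L^{\s+1}(\R)$. For $N$ large, $F_N(\mathbf{x}_N)<\infty$, so the coordinates are distinct, $\nab_{X_N}F_N(\mathbf{x}_N)$ is defined, and \eqref{cs2}--\eqref{variationformula} apply. If the left side of \eqref{eq:slopeineq} is infinite there is nothing to prove, so we pass to a subsequence realizing the liminf.

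\emph{Lower bound along one direction.} Fix $\psi\in C^1_c(\R)$ with $\|\psi\|_{L^2(\mu)}>0$. Combining \eqref{cs2} and \eqref{variationformula} gives
\[
\|\nab_{X_N}F_N(\mathbf{x}_N)\|_{X_N}\,\|\psi(\mathbf{x}_N)\|_{X_N}\ \ge\ \frac{\s}{2}E_N^{\psi}(\mathbf{x}_N),
\]
where $E_N^\psi$ denotes the functional \eqref{defE} built on $\psi$. Since $\psi^2$ is bounded and continuous, narrow convergence gives $\|\psi(\mathbf{x}_N)\|_{X_N}^2=\int\psi^2\,d\mathbf{i}_N(\mathbf{x}_N)\to\int\psi^2\rho\,dx$.

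\emph{Main obstacle: sign of $\psi'$.} A general $\psi$ does not satisfy $\psi'\ge c_0>0$, so Proposition~\ref{prop:gammaliminf} cannot be applied to it directly. This is the one place where the hypothesis of no excess energy enters. Choose $\lambda>\sup|\psi'|+1$ and set $\psi_\lambda:=\psi+\lambda\,\id$, so that $\psi_\lambda'\ge1$. Since $E_N$ is linear in $\psi$ and $E_N^{\id}=2F_N$, we have
\[
E_N^{\psi}=E_N^{\psi_\lambda}-2\lambda F_N.
\]
Proposition~\ref{prop:gammaliminf} applied to $\psi_\lambda$, together with $F_N(\mathbf{x}_N)\to\mathcal F(\mu)$, then yields
\[
\liminf_N E_N^\psi(\mathbf{x}_N)\ \ge\ 2\int(\psi'+\lambda)\rho^{\s+1}-2\lambda\int\rho^{\s+1}\ =\ 2\int\psi'\rho^{\s+1}\,dx.
\]
Without energy convergence, only $\liminf F_N\ge\mathcal F(\mu)$ would be available, which has the wrong sign here.

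\emph{Conclusion.} Dividing by $\|\psi(\mathbf{x}_N)\|_{X_N}\to\|\psi\|_{L^2(\mu)}>0$, we get
\[
\liminf_N\|\nab_{X_N}F_N(\mathbf{x}_N)\|_{X_N}\ \ge\ \frac{\s\int\psi'\rho^{\s+1}\,dx}{\|\psi\|_{L^2(\mu)}}.
\]
Taking the supremum over $\psi\in C^1_c(\R)$ and using the dual formula \eqref{slopedual} (directions with $\|\psi\|_{L^2(\mu)}=0$ are handled in that formula as in Section~\ref{rmk:slopeformula}) gives \eqref{eq:slopeineq}.

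The final assertion follows by applying Corollary~\ref{coro:noexcess}. It supplies the energy convergence, provided $\mu$ is absolutely continuous. That holds because $S$-convergence implies narrow convergence, and bounded slope together with \eqref{noexcess} applied to any finite-energy $\nu$ at finite $W_2$-distance bounds $F_N$ uniformly, so Proposition~\ref{prop:gammaliminf} gives $\mu=\rho\,dx$.
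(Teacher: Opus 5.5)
Your proof is correct and follows the paper's argument essentially step for step: the monotone reduction $\psi_\lambda=\psi+\lambda\,\id$, in which the energy convergence absorbs the $-2\lambda F_N$ term, then the lifted variation with Cauchy--Schwarz, and finally the dual formula \eqref{slopedual}. Your extra check in the final clause usefully makes explicit a hypothesis the paper leaves implicit when invoking Corollary~\ref{coro:noexcess}: bounded slope together with \eqref{noexcess} against a fixed finite-energy $\nu$ bounds $F_N$, which forces $\mu=\rho\,dx$ with $\mathcal F(\mu)<\infty$.
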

\begin{proof}
That $\mu=\rho\,dx$ with $\rho\in L^{\s+1}$ follows from Proposition \ref{prop:gammaliminf} with $\psi=\id$ and $\mathcal F(\mu)<\infty$.

 If $\liminf_N\|\nab_{X_N}F_N(\mathbf{x}_N)\|_{X_N}=+\infty$ there is nothing to prove, so we may extract a subsequence, not relabelled, along which $\|\nab_{X_N}F_N(\mathbf{x}_N)\|_{X_N}$ converges to that liminf and is bounded; it suffices to prove \eqref{eq:slopeineq} along it. The hypotheses of narrow convergence and of convergence of the energies are preserved by extraction, and in particular the particles are pairwise distinct along the subsequence, so that $\nab_{X_N}F_N$ is well defined.

Let $\psi\in C^1_c(\R)$. We reduce to the monotone case by writing $\psi= \hat \psi -\lambda \id$ with $\lambda:=\|\psi'\|_\infty+1$, so that $\hat\psi\in C^1$ and $\hat \psi' \ge 1$. Since $E_N$ is linear in $\psi$, \eqref{defE} gives
\be\label{splitE}
E_N(\mathbf{x}_N)= \hat E_N(\mathbf{x}_N)-2\lambda F_N(\mathbf{x}_N),
\ee
where $\hat E_N$ is the functional defined as in  \eqref{defE} but associated to $\hat\psi$. Proposition \ref{prop:gammaliminf} applied to $\hat\psi$, which requires nothing but $\hat\psi'\ge1$ and the narrow convergence, gives $\liminf_N \hat E_N(\mathbf{x}_N)\ge 2\int_{\R} \hat \psi' \rho^{\s+1} \, dx$, and combining with the convergence of the energies we deduce from \eqref{splitE} that
\be\label{liminfE}
\liminf_{N\to \infty} \frac{1}2 E_N(\mathbf{x}_N)\ge  \int_{\R}  \hat\psi' \rho^{\s+1} \, dx- \lambda \mathcal{F}(\mu)=  \int_{\R}  \psi' \rho^{\s+1} \, dx .
\ee
Note that $\hat\psi$ is unbounded but is only used in \eqref{splitE} and in Proposition \ref{prop:gammaliminf}, which put no growth condition on it. Combining \eqref{cs2}--\eqref{variationformula} and \eqref{liminfE}, we obtain
\be\label{slope1}
\liminf_{N\to \infty} \|\nab_{X_N} F_N(\mathbf{x}_N) \|_{X_N } \|\psi(\mathbf{x}_N)\|_{X_N} \ge \s\int_{\R}  \psi' \rho^{\s+1} dx.
\ee
Since $\psi$ is bounded and continuous, the narrow convergence gives $\|\psi(\mathbf{x}_N)\|_{X_N}^2 =  \frac1N\sum_{i=1}^N \psi(x_i)^2 \to \int_\R \psi^2 \rho\,dx$, hence
\be\label{slope2}
\liminf_{N\to \infty} \|\nab_{X_N} F_N(\mathbf{x}_N)\|_{X_N}\ \ge\ \sup_{\psi \in C^1_c(\R)} \frac{\s\int_{\R}  \psi' \rho^{\s+1} \, dx}{\(\int_\R \psi^2 \rho \, dx\)^{\hal}} ,
\ee
with the convention that only $\psi$'s with $\int\psi^2\rho>0$ are considered. The right-hand side of \eqref{slope2} is $|\partial \mathcal F|(\mu)$ in view of  \eqref{slopedual}.  Thus \eqref{eq:slopeineq} holds in all cases.
\end{proof}

\subsection{Conclusion: proof of the theorem}\label{sec:conclusion}

All the ingredients of the Sandier--Serfaty scheme are now available, with one caveat: the scheme, as recalled in Section \ref{sec:scheme}, presupposes that the empirical measures converge for the $S$ convergence, i.e.\ for $W_2$, at each time, whereas the a priori estimates only give compactness for $W_1$. We therefore run the scheme with $W_1$ as the topology of compactness, use the truncated no-excess-energy estimate of Section \ref{sec:trunc} to obtain the convergence of the energies, and only at the very end recover the $W_2$ convergence, from the convergence of the energies and the identity \eqref{secondmoment}. In particular, the escape of mass to infinity is excluded a posteriori.

\subsubsection*{Step 1: A priori estimates}
Let $\mathbf{x}_N(t)$ solve \eqref{gf1} with initial data satisfying \textup{(H1)--(H2)}. By \textup{(H2)}, $F_N(\mathbf{x}_N(0))<\infty$ for $N$ large, so that after relabelling, $\mathbf{x}_N(0)\in\mathcal C_N$ and Lemma~\ref{lem:globalexistence} applies. Since $T$ is arbitrary, we carry out Steps 1--5 on the interval $[0,T+1]$ (all constants below depend on $T$), which will be convenient in Step 6. Integrating \eqref{CSineq}, which is an equality for the gradient flow, between $0$ and $T'$ gives the energy--dissipation identity
\be\label{EDIN}
F_N\big(\mathbf{x}_N(0)\big)=F_N\big(\mathbf{x}_N(T')\big)+\int_0^{T'}\hal\(\|\dot{\mathbf{x}}_N\|_{X_N}^2+\|\nab_{X_N}F_N(\mathbf{x}_N)\|_{X_N}^2\)dt,\qquad 0<T'\le T+1 .
\ee
Since $F_N\ge0$,  $F_N(\mathbf{x}_N(0))\to\mathcal F(\rho_0)$ by \textup{(H2)}, and $F_N$ is nonincreasing, there is $C_0<\infty$ with
\be\label{apriori}
\sup_{N}\sup_{t\in[0,T+1]}F_N\big(\mathbf{x}_N(t)\big)\le C_0,\qquad
\sup_N\int_0^{T+1}\Big(\|\dot{\mathbf{x}}_N\|_{X_N}^2+\|\nab_{X_N}F_N(\mathbf{x}_N)\|_{X_N}^2\Big)dt\le C_0 .
\ee Next, the second-moment identity \eqref{secondmoment}, together with \eqref{apriori} and the convergence of the initial second moments given by \textup{(H1)},  yields
\be\label{K2}
K_2:=\sup_N\sup_{t\in[0,T+1]}\frac1N\sum_ix_i(t)^2<\infty .
\ee
Finally, by the Cauchy--Schwarz inequality in time and \eqref{apriori}, for $0\le\sigma<t\le T+1$,
\be\label{holder}
W_2\Big(\mathbf{i}_N\big(\mathbf{x}_N(t)\big),\mathbf{i}_N\big(\mathbf{x}_N(\sigma)\big)\Big)\le\|\mathbf{x}_N(t)-\mathbf{x}_N(\sigma)\|_{X_N}\le\(\int_\sigma^t\|\dot{\mathbf{x}}_N\|_{X_N}^2\)^{\hal}(t-\sigma)^{\hal}\le\sqrt{C_0}\,(t-\sigma)^{\hal}.
\ee

\subsubsection*{Step 2: Compactness}
Let $\mathcal K:=\{\nu\in\mathcal P_2(\R):\int x^2d\nu\le K_2\}$. The set $\mathcal K$ is tight, and $|x|$ is uniformly integrable on it, so narrow convergence on $\mathcal K$ is equivalent to $W_1$ convergence; being $W_1$-closed by lower semicontinuity of $\nu\mapsto\int x^2d\nu$, $\mathcal K$ is $W_1$-compact. By \eqref{K2} the curves $t\mapsto\mathbf{i}_N(\mathbf{x}_N(t))$ take values in $\mathcal K$, and by \eqref{holder} and $W_1\le W_2$ they are $\hal$-Hölder in $W_1$, uniformly in $N$. By the Arzelà--Ascoli theorem, from any subsequence we may extract a further subsequence, not relabelled, such that
\be\label{cptness}
\mathbf{i}_N\big(\mathbf{x}_N(t)\big)\longrightarrow\mu(t)\quad\text{in }W_1,\ \text{uniformly in }t\in[0,T+1],
\ee
for some $\mu\in C([0,T+1];(\mathcal K,W_1))$. By \textup{(H1)}, $\mu(0)=\rho_0\,dx$. By lower semicontinuity of $W_2$ under narrow convergence and \eqref{holder},
\be\label{W2curve}
W_2\big(\mu(\sigma),\mu(t)\big)\le\liminf_N W_2\Big(\mathbf{i}_N\big(\mathbf{x}_N(\sigma)\big),\mathbf{i}_N\big(\mathbf{x}_N(t)\big)\Big)\le\sqrt{C_0}\,(t-\sigma)^{\hal},
\ee
so that $\mu$ is also $\hal$-Hölder for $W_2$ (its absolute continuity for $W_2$ will follow in Step 4). Finally, by Proposition \ref{prop:gammaliminf} applied at each fixed $t$ (which only requires narrow convergence), $\mu(t)=\rho(t)\,dx$ is absolutely continuous with
\be\label{liminfF}
\mathcal F\big(\mu(t)\big)\le\liminf_NF_N\big(\mathbf{x}_N(t)\big)\le C_0\qquad\text{for every }t\in[0,T+1].
\ee

\subsubsection*{Step 3: No excess energy along the flow}
The hypotheses of Corollary \ref{coro:noexcessR} are met, by \eqref{apriori}, \eqref{K2}, \eqref{cptness} and \eqref{liminfF}. Hence
\be\label{L1conv}
F_N\big(\mathbf{x}_N(\cdot)\big)\longrightarrow\mathcal F\big(\mu(\cdot)\big)\qquad\text{in }L^1(0,T+1),
\ee
and, extracting once more, we may  assume that
\be\label{aeconv}
F_N\big(\mathbf{x}_N(t)\big)\longrightarrow\mathcal F\big(\mu(t)\big)\qquad\text{for a.e.\ }t\in(0,T+1).
\ee

\subsubsection*{Step 4: Slopes and speeds}
At a.e.\ $t\in(0,T+1)$ we thus have narrow convergence together with convergence of the energies, which are exactly the hypotheses of Proposition \ref{prop:slope}. Therefore, for a.e.\ $t\in(0,T+1)$,
\be\label{slopeae}
\liminf_N\|\nab_{X_N}F_N(\mathbf{x}_N(t))\|_{X_N}\ \ge\ |\partial\mathcal F|\big(\mu(t)\big),
\ee
and by  Fatou's lemma,
\be\label{slopeint}
\liminf_N\int_0^{T'}\|\nab_{X_N}F_N(\mathbf{x}_N)\|_{X_N}^2\,dt\ \ge\ \int_0^{T'}|\partial\mathcal F|^2\big(\mu(t)\big)\,dt,\qquad 0<T'\le T+1 .
\ee
For the speeds, $\|\dot{\mathbf{x}}_N\|_{X_N}$ is bounded in $L^2(0,T+1)$ by \eqref{apriori}; along a subsequence realizing the $\liminf$ in \eqref{speedint} below, extracting a weak limit $m\in L^2$ and using $W_2(\mu(\sigma),\mu(t))\le\liminf_N\|\mathbf{x}_N(t)-\mathbf{x}_N(\sigma)\|_{X_N}\le\int_\sigma^tm$ as in \eqref{W2curve}, we get $|\dot{\mu}|\le m$ a.e., hence by weak lower semicontinuity of the $L^2$ norm
\be\label{speedint}
\liminf_N\int_0^{T'}\|\dot{\mathbf{x}}_N\|_{X_N}^2\,dt\ \ge\ \int_0^{T'}|\dot{\mu}|^2(t)\,dt,\qquad 0<T'\le T+1 .
\ee

\subsubsection*{Step 5: Identification of the limit}
Fix $T'\in(0,T+1]$ and pass to the liminf in \eqref{EDIN}, using \textup{(H2)} on the left, \eqref{liminfF} at time $T'$, and \eqref{slopeint}--\eqref{speedint}, we obtain
\be\label{EDIlimit}
\mathcal F(\rho_0)\ \ge\ \mathcal F\big(\mu(T')\big)+\int_0^{T'}\hal\Big(|\dot{\mu}|^2+|\partial\mathcal F|^2(\mu)\Big)dt .
\ee
Both integrals being finite, the chain rule of Proposition \ref{prop:AGSfacts}\ref{it:uppergradient} gives $\mathcal F(\mu(0))-\mathcal F(\mu(T'))\le\int_0^{T'}|\partial\mathcal F|(\mu)\,|\dot{\mu}|\,dt\le\int_0^{T'}\hal(|\dot{\mu}|^2+|\partial\mathcal F|^2(\mu))$, so that \eqref{EDIlimit} is in fact an equality and
\be\label{EDIlimit2}
\mathcal F\big(\mu(0)\big)=\mathcal F\big(\mu(T')\big)+\int_0^{T'}\hal\Big(|\dot{\mu}|^2+|\partial\mathcal F|^2(\mu)\Big)dt\qquad\text{for every }T'\in(0,T+1].
\ee
Differentiating in $T'$, this gives $\frac{d}{dt}\mathcal F(\mu(t))\le-\hal|\dot{\mu}|^2(t)-\hal|\partial\mathcal F|^2(\mu(t))$ for a.e.\ $t$, so that $\mu$ is a curve of maximal slope for $\mathcal F$ with respect to $|\partial\mathcal F|$, started from $\rho_0$; by \cite[Th.~11.1.3]{AGS} it is therefore a gradient flow in the sense of \eqref{gf2}. By the uniqueness and the identification of Proposition \ref{prop:AGSfacts}\ref{it:existuniq}--\ref{it:identification}, $\mu(t)=\rho(t)\,dx$ on $[0,T+1]$, where $\rho$ solves \eqref{eqn:rho-PME}. Since the limit does not depend on the subsequence, the whole sequence converges in $C([0,T+1];(\mathcal K,W_1))$.

\subsubsection*{Step 6: Convergence of the energies at every time, and of the slopes}
Steps 1--3 apply to the full sequence, on the interval $[0,T+1]$ as considered in Step 1, so \eqref{L1conv} holds there. The functions $t\mapsto F_N(\mathbf{x}_N(t))$ are nonincreasing, and the limit $t\mapsto\mathcal F(\rho(t))$ is continuous and nonincreasing by \eqref{EDIlimit2}. Hence, for $t\in[0,T]$ and $0<\ep<\min(t,1)$,
\[
F_N\big(\mathbf{x}_N(t)\big)\le\frac1\ep\int_{t-\ep}^tF_N\big(\mathbf{x}_N\big)\longrightarrow\frac1\ep\int_{t-\ep}^t\mathcal F(\rho)\le\mathcal F\big(\rho(t-\ep)\big),
\]
\[
F_N\big(\mathbf{x}_N(t)\big)\ge\frac1\ep\int_t^{t+\ep}F_N\big(\mathbf{x}_N\big)\longrightarrow\frac1\ep\int_t^{t+\ep}\mathcal F(\rho)\ge\mathcal F\big(\rho(t+\ep)\big),
\]
and letting $\ep\to0$ gives \ref{it:energy} at every $t\in(0,T]$, by continuity of $\sigma\mapsto\mathcal F(\rho(\sigma))$. At $t=0$  \ref{it:energy} is exactly \textup{(H2)}. Thus \ref{it:energy} holds at all times $t\in[0,T]$. For the slopes, by \eqref{EDIN}, \ref{it:energy} and \eqref{EDIlimit2},
\[
\int_0^T\|\nab_{X_N}F_N(\mathbf{x}_N)\|_{X_N}^2dt=F_N\big(\mathbf{x}_N(0)\big)-F_N\big(\mathbf{x}_N(T)\big)\longrightarrow\mathcal F(\rho_0)-\mathcal F\big(\rho(T)\big)=\int_0^T|\partial\mathcal F|^2\big(\rho(t)\big)dt,
\]
where we used that the flow is a gradient flow, so that the two terms of \eqref{EDIlimit2} are equal. This is \ref{it:dissip}, and along the way $\int_0^T\|\dot{\mathbf{x}}_N\|^2_{X_N}\to\int_0^T|\dot{\mu}|^2$ as well.

\subsubsection*{Step 7: From $W_1$ to $W_2$}
It remains to upgrade \eqref{cptness} to \ref{it:conv}. By \eqref{secondmoment}, \ref{it:energy} and dominated convergence (the energies being bounded by $C_0$),
\[
\frac1N\sum_ix_i(t)^2=\frac1N\sum_ix_i(0)^2+2\s\int_0^tF_N\big(\mathbf{x}_N\big)\longrightarrow\int x^2\rho_0\,dx+2\s\int_0^t\mathcal F\big(\rho(\sigma)\big)d\sigma=\int x^2\rho(t)\,dx,
\]
the initial moments converging by \textup{(H1)} and the last equality being \eqref{secondmomentPME}, i.e.\ the same identity for the limiting flow. Since narrow convergence together with convergence of the second moments is equivalent to convergence in $W_2$, we conclude that $W_2(\mathbf{i}_N(\mathbf{x}_N(t)),\rho(t)dx)\to0$ for every $t\in[0,T]$. Both curves being $\hal$-Hölder in $W_2$ with the constant $\sqrt{C_0}$, by \eqref{holder} and \eqref{W2curve}, the convergence is uniform in $t$: covering $[0,T]$ by finitely many intervals of length $\ep^2/C_0$ and using the triangle inequality gives \ref{it:conv}. \qed

\section{Extension to general short-range interactions}\label{sec:general}

We explain here how the proof adapts to the dynamics \eqref{eqn:main-1st} with a general
potential $V$. The system \eqref{eqn:main-1st} is the gradient flow, for the $X_N$ inner
product, of
\be\label{FNV}
F_N(\mathbf{x}_N):=\frac{1}{2N}\sum_{i\ne j}V\big(N(x_i-x_j)\big),
\ee
and the Riesz case with $\s>1$ corresponds to $V(z)=|z|^{-\s}/\zeta(\s)$, up to the
replacement of $Z_N(\s)$ by its limit $\zeta(\s)$, which only changes the time by a factor
tending to $1$. The critical case $\s=1$, where the normalization $Z_N(1)\sim\log N$ has to
depend on $N$, is not covered by this section.

The proof follows the Riesz case closely, and we only describe the changes. The homogeneity of the Riesz kernel was used three times in the proof above: in the second-moment identity \eqref{secondmoment}, in the monotone reduction of the slope inequality, and in the bulk--bulk count of Proposition~\ref{prop:noexcessR}, through the factor $\eta_R$. The first two uses rest on the fact that, for the Riesz kernel, the virial is a multiple of the energy. For a general $V$ they are replaced by Lemma~\ref{lem:virial}, which shows that convergence of the energies forces convergence of the virial, and which is the main new ingredient (Section~\ref{sec:virial}); the third is replaced by a direct comparison of energies (Section~\ref{sec:Vverbatim}). The global existence of the particle dynamics, the a priori estimates and the $\Gamma$-liminf also need some adaptation (Section~\ref{sec:Vverbatim}), and the properties of the limiting energy, which in the Riesz case follow from the explicit form of $\int\rho^{\s+1}$, are established in Section~\ref{sec:Vlimit}. The remaining steps carry over verbatim, and the proof is assembled in Section~\ref{sec:Vconclusion}.

\subsection{Assumptions and main result}\label{sec:Vassumptions}
We assume that $V$ is even and that on $(0,\infty)$:
\begin{enumerate}[label=\textup{(V\arabic*)},leftmargin=3.4em]
\item\label{V1} $V:(0,\infty)\to[0,\infty)$ is convex, nonincreasing, $C^1$ and $V(z)\to0$ as
      $z\to+\infty$;
\item\label{V2} $\sum_{j\ge1}V(j)<\infty$;
\item\label{V3} $V$ is doubling near the origin: $V(z/2)\le C_VV(z)$ for $0<z\le1$;
\item\label{V4} $V$ is strictly convex.
\end{enumerate}

\begin{remark}
The assumptions \ref{V1}--\ref{V4} are satisfied, for instance, by $(1+|z|)^{-\s}$
with $\s>1$, and by $e^{-|z|}$; in both cases $V$ is bounded, the
particles may in principle approach each other at distances much smaller than $1/N$, and
Lemma \ref{lem:Vgap} is what prevents collisions. We do not know whether the strict
convexity \ref{V4} can be dispensed with: for a $C^1$ convex $V$ that is affine on some interval, distinct local
arrangements of the gaps have the same energy, and the virial is then not determined by the
energy.

Let us compare these assumptions with those of \cite{KO90}, where the dynamics \eqref{eqn:main-1st} is studied for $V$ even, $V\in C^4(\R\setminus\{0\})$ with $V^{(4)}>0$ on $\R\setminus\{0\}$, and $|V^{(k)}(z)|\le C(1+|z|)^{-2-k}$ for $k=0,1,2$ and $|z|\ge1$. These conditions imply that $V$, $-V'$ and $V''$ are positive on $(0,\infty)$ \cite[(6.9)]{KO90}, hence \ref{V1}, \ref{V2} and \ref{V4}, but they impose nothing near the origin, where \ref{V3} may fail. This is because in \cite{KO90} the gaps are bounded below by a multiple of $1/N$ at all times, as a consequence of the assumptions on the initial data. Conversely, \ref{V1}--\ref{V4} allow potentials that are only $C^1$ and decay only summably at infinity, such as $|z|^{-\s}$ or $(1+|z|)^{-\s}$ with $1<\s<2$, which are not covered by \cite{KO90}.
\end{remark}

The first two assumptions are needed throughout what follows: \ref{V1}
makes the interaction repulsive with a force that decreases with the distance, and \ref{V2}
is the short-range condition making the limiting energy local; note that \ref{V2} forces
$NV(N)\to0$, since $V$ is nonincreasing. The last two are needed at different places, which is why we keep them apart:
\ref{V3} is what transfers an equi-integrability property from the kernel $V$ to the virial
kernel $\tilde V$ of \eqref{virial}, and enters only through the doubling bound \eqref{doubling} of Lemma~\ref{lem:Gfinite}, through Lemma~\ref{lem:virialbound}, and through
Step~3 of the proof of Lemma~\ref{lem:virial}, whereas \ref{V4}, which strengthens the
convexity already required in \ref{V1} (and in particular excludes $V\equiv0$), is used only in Step~2 of that proof. All four hold
for $V(z)=|z|^{-\s}/\zeta(\s)$ with $\s>1$.

Set, for $u>0$ and $\rho>0$,
\be\label{GPsi}
G(u):=\sum_{j\ge1}V(ju),\qquad \Psi(\rho):=\rho\,G(1/\rho)=\rho\sum_{j\ge1}V(j/\rho),
\qquad \Psi(0):=0 .
\ee
By Lemma~\ref{lem:Gfinite} below, these functions are finite, and $\Psi$ is $C^1$ on $[0,\infty)$. We may then define the limiting energy and its pressure by

\be\label{FVlim}
\mathcal F(\mu):=\begin{cases}\int_\R\Psi(\rho)\,dx&\quad\text{if }\mu=\rho\,dx\\   +\infty &\quad \text{otherwise},\end{cases}\qquad
P(\rho):=\rho\Psi'(\rho)-\Psi(\rho)=-\sum_{j\ge1}jV'(j/\rho),
\ee
the second expression for $P$ being justified in Lemma~\ref{lem:Gfinite}. For a lattice of spacing $1/(N\rho)$, each particle interacts with its neighbors at
rescaled distances $j/\rho$, $j\ge1$, so that $\Psi(\rho)$ is the energy per unit length at
density $\rho$. In quantile variables, if
$\mu=\rho\,dx$ has quantile function $Q$, then since $\rho(Q(u))Q'(u)=1$ a.e.,
\be\label{FVquantile}
\mathcal F(\mu)=\int_0^1G\big(Q'(u)\big)\,du .
\ee
In the Riesz case $G(u)=u^{-\s}$, $\Psi(\rho)=\rho^{\s+1}$ and $P(\rho)=\s\rho^{\s+1}$, and
we recover \eqref{calF} and \eqref{eqn:rho-PME}; in general, the limiting equation is
the nonlinear diffusion equation
\be\label{PDEV}
\partial_t\rho=\partial_x^2P(\rho).
\ee
Finally we
introduce the \emph{virial} kernel and the corresponding discrete functional,
\be\label{virial}
\tilde V(z):=-zV'(z)\ \ge0,\qquad
\tilde F_N(\mathbf{x}_N):=\frac{1}{2N}\sum_{i\ne j}\tilde V\big(N(x_i-x_j)\big),
\ee
and note that $\sum_{j\ge1}\tilde V(j/\rho)=P(\rho)/\rho$ (see Lemma~\ref{lem:Gfinite}), so that formally $\tilde
F_N\to\int P(\rho)\,dx$. In the Riesz case $\tilde V=\s V$ and $\tilde F_N=\s F_N$: the
virial is a multiple of the energy.

\begin{theo}\label{thm:general}
Assume \ref{V1}--\ref{V4}, let $\rho_0$ be a probability density with
$\int_\R\Psi(\rho_0)\,dx<\infty$, and let the initial data satisfy \textup{(H1)--(H2)} with
$\mathcal F$ as in \eqref{FVlim}. Assume moreover that $x_1(0),\dots,x_N(0)$ are pairwise distinct (which is automatic if $V(0^+)=+\infty$). Then the conclusions \ref{it:conv}--\ref{it:dissip} of
Theorem \ref{thm:main} hold for the solutions of \eqref{eqn:main-1st}, where $\rho$ is the $W_2$ gradient flow of $\mathcal F$ starting from
$\rho_0$, which solves \eqref{PDEV} (see Section~\ref{sec:Vlimit}).
\end{theo}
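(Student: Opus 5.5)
The plan is to rerun the scheme of Section~\ref{sec:conclusion} step by step for the energy \eqref{FNV}, replacing each use of the homogeneity of the Riesz kernel by a statement about general $V$. First I establish the properties of $G$, $\Psi$, $P$ (Lemma~\ref{lem:Gfinite}): finiteness from \ref{V2} and monotonicity, convexity of $u\mapsto G(u)$ (as a sum of convex nonincreasing functions, so McCann's criterion gives displacement convexity of $\mathcal F$), $C^1$ regularity of $\Psi$, superlinear growth and the doubling property inherited from \ref{V3}. These feed into the analogue of Proposition~\ref{prop:AGSfacts}: the AGS theory for internal energies yields existence, uniqueness and the EVI, the identification with \eqref{PDEV}, and the dual slope formula \eqref{slopedual} with $\s\rho^{\s+1}$ replaced by $P(\rho)$. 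For the particle level, global existence and no collisions require Lemma~\ref{lem:Vgap}: if $V(0^+)<\infty$ the energy bound alone does not prevent collisions. Here I would use convexity to show that the gaps evolve monotonically in the right direction, since the nearest-neighbour repulsion dominates when two particles approach. Ordered configurations still form a convex cone on which $F_N$ is convex.

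Next I redo the $\Gamma$-convergence. For the $\Gamma$-liminf with weights $\psi$, I repeat Proposition~\ref{prop:gammaliminf} with the kernel $\tilde V$ and split by index gap. Jensen at each gap $k$ gives $(m-k)\,\tilde V$ evaluated at the mean rescaled gap, and summing over $k$ yields $\sum_k \tilde V(k/\rho_I)$, provided $u\mapsto\tilde V$ along the relevant argument is convex. Since this may fail for general $V$, I would instead prove the liminf for $F_N$ itself, which uses $V$ convex. The recovery bound of Proposition~\ref{prop:recovery} holds verbatim scale by scale, via convexity of $V$ and Jensen, with $\sum_k V(k\,\cdot)=G$. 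Propositions~\ref{prop:noexcess}--\ref{prop:noexcessR} then follow, except that the factor $\eta_R$ is replaced by comparing $\int G(Q^{R\prime})$ with $\int\Psi(\rho)$. This comparison is a direct estimate: conditioning multiplies the density by $c_R\to1$, and $\Psi(c\rho)\le C\Psi(\rho)$ by doubling and continuity.

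The main obstacle is the two places where the virial appeared, and the key new ingredient is Lemma~\ref{lem:virial}: if $\mathbf{i}_N(\mathbf{x}_N)\rightharpoonup\rho\,dx$ and $F_N(\mathbf{x}_N)\to\mathcal F(\rho)$, then $\tilde F_N(\mathbf{x}_N)\to\int P(\rho)$, and more generally the weighted virial converges. The idea is that the energy equality forces every inequality in the liminf chain to be asymptotically sharp. In particular, Jensen at each scale $k$ becomes sharp on most mesoscopic intervals, so by strict convexity \ref{V4} the rescaled gaps $N(x_{i+k}-x_i)$ concentrate near $k/\rho$ in measure. I would make this quantitative with a modulus of strict convexity on compact ranges; this is Step 2. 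Then $\tilde V$ of the gaps converges in measure, and equi-integrability transferred from $V$ to $\tilde V$ via \ref{V3} yields convergence of the sums; this is Step 3, with Lemma~\ref{lem:virialbound} giving $\tilde V\le C V(\cdot/2)\le C' V$ near $0$ by convexity.

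With this lemma in hand, the second-moment identity becomes $\frac{d}{dt}\frac1N\sum x_i^2=2\tilde F_N$, bounded by Lemma~\ref{lem:virialbound}, and it converges once the energies converge, which gives the $W_1\to W_2$ upgrade as in Step 7. The monotone reduction in the slope inequality uses $E_N=\hat E_N-2\lambda\tilde F_N$, so \eqref{liminfE} holds with $\int\psi' P(\rho)$, giving \eqref{slope2} and hence the slope inequality. The final assembly (Steps 1--7 of Section~\ref{sec:conclusion}) is then verbatim.
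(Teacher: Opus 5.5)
Your proposal is correct and follows the paper's proof essentially step by step. The ingredients match: Lemma~\ref{lem:Gfinite} feeding the AGS theory and the dual slope formula with $P(\rho)$; the minimal-gap argument of Lemma~\ref{lem:Vgap}; the unweighted $\Gamma$-liminf and the scale-by-scale recovery bound; a direct bulk--bulk comparison replacing $\eta_R$; and Lemma~\ref{lem:virial}, used both for the second moments and for the slope. That lemma rests on saturation of every scale, strict convexity pinning the rescaled gaps, and \ref{V3} transferring uniform integrability from $V$ to $\tilde V$. It also needs a uniform tail bound in $k$, which comes from the halving $b_N(k)\le a_N(\lfloor k/2\rfloor)+a_N(\lceil k/2\rceil)$ of Lemma~\ref{lem:virialbound} and which the pointwise bound near $0$ alone does not give.

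The main deviation is that your monotone reduction $\psi=\hat\psi-\lambda\,\mathrm{id}$ is superfluous. The lower bound on $\hat E_N$ can only come from the weighted form of Lemma~\ref{lem:virial}, since, as you note, there is no weighted $\Gamma$-liminf for the possibly nonconvex $\tilde V$. Once that lemma holds for every $\psi$ with bounded derivative, the paper simply applies it directly to $\psi\in C^1_c$ in Proposition~\ref{prop:Vslope}.
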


\subsection{Properties of the limiting energy}\label{sec:Vlimit}

\begin{lem}[Properties of $G$, $\Psi$ and $P$]\label{lem:Gfinite}
Assume \ref{V1}--\ref{V2} and $V\not\equiv0$ (the last assumption is used only for \eqref{Gblowup} and for the superlinear growth of $\Psi$). Then:
\begin{enumerate}[label=\textup{(\roman*)},leftmargin=2.6em]
\item\label{it:Vt} the virial kernel satisfies
\be\label{tildeVbound}
\tilde V(z)\ \le\ 2\,V(z/2)\qquad (z>0);
\ee
\item\label{it:G} $G(u)<\infty$ for every $u>0$; more precisely, with
$m:=\lceil 1/u\rceil$,
\be\label{Gbound}
G(u)\ \le\ m\,V(u)+(m+1)\sum_{j\ge1}V(j) .
\ee
Moreover $G$ is convex, nonincreasing and $C^1$ on $(0,\infty)$, with $G'(u)=\sum_{j\ge1}jV'(ju)$; it satisfies
\be\label{Gprime}
|G'(u)|\,u=\sum_{j\ge1}\tilde V(ju)\ \le\ 2G(u/2)\qquad(u>0),
\ee
and
\be\label{Gblowup}
G(u)\ \ge\ \frac1u\int_u^\infty V(t)\,dt\ \xrightarrow[u\to0]{}\ +\infty ;
\ee
\item\label{it:Psi} $\Psi$ is finite, convex, nondecreasing and $C^1$ on $[0,\infty)$, with $\Psi'(0)=0$ and $\Psi(\rho)/\rho\to+\infty$ as $\rho\to\infty$; the pressure satisfies $P(\rho)=-G'(1/\rho)=\rho\sum_{j\ge1}\tilde V(j/\rho)$ for $\rho>0$, it is continuous and nondecreasing on $[0,\infty)$ with $P(0)=0$, and
\be\label{PPsi}
P(\rho)\ \le\ \Psi(2\rho)\qquad(\rho\ge0);
\ee
\item\label{it:doub} if moreover \ref{V3} holds, then, with $C:=2(2+C_V)$,
\be\label{doubling}
\Psi(2\rho)\ \le\ C\,\Psi(\rho)+2G(\tfrac12)\,\rho\qquad(\rho\ge0).
\ee
\end{enumerate}
\end{lem}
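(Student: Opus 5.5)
We prove the four items in order, using only \ref{V1}--\ref{V2}, and \ref{V3} only for \ref{it:doub}.

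\emph{Item \ref{it:Vt}.} Since $V$ is convex, $-V'$ is nonincreasing. Hence
\[
\tilde V(z)=-zV'(z)\le 2\int_{z/2}^{z}(-V'(t))\,dt=2\big(V(z/2)-V(z)\big)\le 2V(z/2),
\]
using $V\ge0$.

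\emph{Item \ref{it:G}, finiteness and regularity of $G$.}
\begin{itemize}
\item For \eqref{Gbound}, group the indices $j$ into blocks of $m$ consecutive integers. Since $mu\ge1$, the block $\{(l-1)m+1,\dots,lm\}$ contains at most $m$ terms, each at most $V((l-1)mu+u)\le V(l-1+u)$. The first block contributes at most $mV(u)$. Each later block contributes at most $mV(l-1)$, and $\sum_{l\ge1}V(l)<\infty$ by \ref{V2}. A slightly more careful count, using that $V$ is nonincreasing, gives the stated constant $(m+1)\sum_j V(j)$.
\item Convexity and monotonicity of $G$ follow termwise.
\item For the $C^1$ property, differentiate termwise. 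On $[u_0,\infty)$ the series $\sum_j j|V'(ju)|=u^{-1}\sum_j\tilde V(ju)$ is dominated by $2u_0^{-1}\sum_jV(ju_0/2)=2u_0^{-1}G(u_0/2)<\infty$, by \ref{it:Vt}. This uniform convergence also gives \eqref{Gprime}.
\item For \eqref{Gblowup}, since $V$ is nonincreasing, $uV(ju)\ge\int_{ju}^{(j+1)u}V$. Summing over $j$ gives $uG(u)\ge\int_u^\infty V$. The right-hand side tends to $\int_0^\infty V>0$ as $u\to0$ (possibly $+\infty$), since $V\not\equiv0$.
\end{itemize}

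\emph{Item \ref{it:Psi}.}
\begin{itemize}
\item $\Psi(\rho)=\rho G(1/\rho)$ is the perspective of the convex function $G$, hence convex.
\item By direct computation, $\Psi'(\rho)=G(1/\rho)-\rho^{-1}G'(1/\rho)\ge0$. This gives $P=\rho\Psi'-\Psi=-G'(1/\rho)=\rho\sum_j\tilde V(j/\rho)$ by \eqref{Gprime}.
\item As $\rho\to0$, i.e.\ $u=1/\rho\to\infty$, dominated convergence together with $V(z)\to0$ gives $G(u)\to0$ and $uG'(u)\to0$. Hence $\Psi'(\rho)\to0$ and $\Psi(\rho)\to0$, and $\Psi'(0)=\lim\Psi(\rho)/\rho=\lim_{u\to\infty}G(u)=0$.
\item $P$ is continuous on $(0,\infty)$ because $G'$ is continuous, and it is nondecreasing because $G'$ is nondecreasing and $1/\rho$ decreases. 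Its continuity at $0$ follows from $uG'(u)\to0$ together with $P(\rho)=\rho\,|G'(1/\rho)|\,(1/\rho)$.
\item Superlinearity: $\Psi(\rho)/\rho=G(1/\rho)\to\infty$ by \eqref{Gblowup}.
\item For \eqref{PPsi}, apply \eqref{Gprime} at $u=1/\rho$: $P(\rho)=u|G'(u)|/u\cdot$ in the form $P(\rho)=\rho\,u|G'(u)|\le 2\rho G(u/2)=\Psi(2\rho)$.
\end{itemize}

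\emph{Item \ref{it:doub}.} This is the main obstacle, since \ref{V3} only holds for $z\le1$. Write $\Psi(2\rho)=2\rho G(u/2)$ with $u=1/\rho$, and split $G(u/2)=\sum_j V(ju/2)$ according to whether $ju/2\le1/2$ or $ju/2>1/2$.
\begin{itemize}
\item \emph{Small arguments, $ju\le1$.} Here \ref{V3} gives $V(ju/2)\le C_VV(ju)$, so these terms are at most $C_VG(u)$.
\item \emph{Large arguments, $ju>1$.} Split into even and odd indices. Even $j=2i$ gives $V(iu)$, summing to at most $G(u)$. For odd $j=2i+1\ge 3$, monotonicity gives $V((2i+1)u/2)\le V(iu)$, again at most $G(u)$. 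The remaining term $j=1$ has $u>1$, so $V(u/2)\le V(1/2)$.
\end{itemize}
Combining, $G(u/2)\le (2+C_V)G(u)+G(\tfrac12)$, where $V(\tfrac12)\le G(\tfrac12)$ absorbs the single leftover term and any stray terms from the cutoff. Multiplying by $2\rho$ yields \eqref{doubling} with $C=2(2+C_V)$. I expect the index bookkeeping at the threshold $ju\approx1$ to be the only delicate point.
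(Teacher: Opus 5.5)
Your proposal is correct and follows the paper's proof in all essentials: the same tangent-type bound for \eqref{tildeVbound}, a grouping of indices for \eqref{Gbound}, termwise differentiation dominated via \eqref{tildeVbound}, convexity of $\Psi$ as a perspective (the paper uses the equivalent supremum of affine functions), and an even/odd splitting combined with \ref{V3} for \eqref{doubling}. The only variation is in (iv), where you split termwise at $ju=1$ and apply \ref{V3} to every small-argument term, whereas the paper first shows $G(u/2)\le 2G(u)+V(u/2)$ for all $u$ and then treats $\rho\ge1$ (applying \ref{V3} to the single term $V(u/2)$) and $\rho<1$ (using $G(1/2\rho)\le G(\tfrac12)$) separately; your bookkeeping at the threshold is in fact clean, with no stray terms, and gives the same constant $C=2(2+C_V)$.
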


\begin{proof}
\ref{it:Vt} Since $V$ is convex and nonnegative, $V(z/2)\ge V(z)-\frac z2V'(z)\ge-\frac z2V'(z)=\frac12\tilde V(z)$.

\ref{it:G} For $u\ge1$ the bound is immediate: $V(ju)\le V(j)$ by monotonicity, so
$G(u)\le\sum_jV(j)$. Let $0<u<1$ and split the sum according to whether $ju<1$ or $ju\ge1$.
There are at most $m$ indices $j$ with $ju<1$, and for each of them $V(ju)\le V(u)$, whence
a contribution at most $m\,V(u)$, which is finite by \ref{V1}. For the indices with
$ju\ge1$, group them according to the value of $k=\lfloor ju\rfloor\ge1$: each $k$
corresponds to at most $1/u+1\le m+1$ indices $j$, and for those $V(ju)\le V(k)$; summing
over $k$ gives a contribution at most $(m+1)\sum_{k\ge1}V(k)$, finite by \ref{V2}. This
proves \eqref{Gbound}.
$G$ is a sum of convex nonincreasing functions, hence convex and nonincreasing. By \eqref{tildeVbound}, $j|V'(ju)|=\tilde V(ju)/u\le2V(ju/2)/u$, which for $u\ge a>0$ is at most $2V(ja/2)/a$, summable in $j$ by \eqref{Gbound}. Hence the series of derivatives converges locally uniformly on $(0,\infty)$, so that $G$ is $C^1$ with $G'(u)=\sum_jjV'(ju)$, and summing \eqref{tildeVbound} at the points $ju$ gives \eqref{Gprime}. For
\eqref{Gblowup}, monotonicity gives $V(ju)\ge\frac1u\int_{ju}^{(j+1)u}V(t)\,dt$, and summing
over $j\ge1$ yields $G(u)\ge\frac1u\int_u^\infty V$. As $u\to0$ the integral converges to
$\int_0^\infty V\in(0,+\infty]$, positive because $V$ is nonnegative, nonincreasing and not
identically zero, so the right-hand side tends to $+\infty$.

\ref{it:Psi} By \ref{it:G}, $\Psi(\rho)=\rho\,G(1/\rho)$ is finite. It is convex: writing the convex function $G$ as a supremum of affine functions $u\mapsto a+bu$, $\Psi(\rho)=\rho\,G(1/\rho)$ is the supremum of the affine functions $\rho\mapsto a\rho+b$. It is also $C^1$ on $(0,\infty)$ with $\Psi'(\rho)=G(1/\rho)-\rho^{-1}G'(1/\rho)\ge0$, since $G\ge0\ge G'$; hence $\Psi$ is nondecreasing. As $u\to\infty$, both $G(u)$ and $-uG'(u)=\sum_{j\ge1}\tilde V(ju)$ tend to $0$ by dominated convergence: for $u\ge1$, $V(ju)\le V(j)$ and $\tilde V(ju)\le2V(ju/2)\le2V(j/2)$, which are summable by \eqref{Gbound}, and $V(z)$ and $\tilde V(z)\le2V(z/2)$ tend to $0$ as $z\to\infty$. Hence, as $\rho\to0$, $\Psi(\rho)/\rho=G(1/\rho)\to0$ and $\Psi'(\rho)\to0$, so that $\Psi$ is $C^1$ on $[0,\infty)$ with $\Psi'(0)=0$. Its superlinear growth follows from \eqref{Gblowup}, since $\Psi(\rho)/\rho=G(1/\rho)$ and $1/\rho\to0$ as $\rho\to\infty$. Next, $P(\rho)=\rho\Psi'(\rho)-\Psi(\rho)=-G'(1/\rho)$, which equals $-\sum_jjV'(j/\rho)$ and, by \eqref{Gprime}, $\rho\sum_j\tilde V(j/\rho)$. It is continuous on $(0,\infty)$, and nondecreasing since $G'$ is nondecreasing, $G$ being convex. Finally, \eqref{Gprime} at $u=1/\rho$ gives $P(\rho)\le2\rho\,G(1/2\rho)=\Psi(2\rho)$, which tends to $0$ as $\rho\to0$, so that $P$ is continuous at $0$ with $P(0)=0$.

\ref{it:doub} Separating even and odd $j$, and using that $V$ is nonincreasing,
\[
G(u/2)=\sum_{i\ge1}V(iu)+\sum_{i\ge1}V\big((i-\tfrac12)u\big)\ \le\ G(u)+V(u/2)+\sum_{i\ge2}V\big((i-1)u\big)=2G(u)+V(u/2).
\]
If $\rho\ge1$, then $u:=1/\rho\le1$, so that $V(u/2)\le C_VV(u)\le C_VG(u)$ by \ref{V3}, and $\Psi(2\rho)=2\rho\,G(u/2)\le2(2+C_V)\Psi(\rho)$. If $\rho<1$, then $1/2\rho>\frac12$, and $\Psi(2\rho)=2\rho\,G(1/2\rho)\le2G(\frac12)\rho$. In both cases \eqref{doubling} holds.
\end{proof}

In dimension one, McCann's condition for $\int\Psi(\rho)$ is
exactly the convexity of $u\mapsto u\Psi(1/u)=G(u)$, which holds by
Lemma~\ref{lem:Gfinite}. Under \ref{V1}--\ref{V3} and $V\not\equiv0$, the conditions of \cite[Sec.~10.4.7]{AGS} are also satisfied, by Lemma~\ref{lem:Gfinite}: $\Psi$ is convex and $C^1$ on $[0,\infty)$ with $\Psi'(0)=0$, it has superlinear growth, and it is doubling, by \eqref{doubling} together with this superlinear growth. The theory of \cite{AGS} recalled in Proposition
\ref{prop:AGSfacts} thus applies to $\mathcal F$ as it did to $\int\rho^{\s+1}$, with two changes: in item \ref{it:identification}, the gradient flow solves \eqref{PDEV}, and its slope is given by Lemma~\ref{lem:Vslopeformula} below; and item \ref{it:secondmomentPME} is replaced by \eqref{secondmomentPMEV} below.

\begin{lem}[Dual formula for the slope]\label{lem:Vslopeformula}
Assume \ref{V1}--\ref{V3} and $V\not\equiv0$, and let $\mu=\rho\,dx$ with $\mathcal F(\mu)<\infty$. Then
\be\label{Vslopedual}
|\partial\mathcal F|(\mu)\ =\ \sup_{\psi\in C^1_c(\R)}\frac{\int_\R\psi'\,P(\rho)\,dx}{\(\int_\R\psi^2\rho\,dx\)^{\hal}} ,
\ee
with the convention that only $\psi$'s with $\int_\R\psi^2\rho\,dx>0$ are considered. If the right-hand side is finite, then $P(\rho)\in W^{1,1}(\R)$ and $|\partial\mathcal F|^2(\mu)=\int_{\{\rho>0\}}|\partial_xP(\rho)|^2\rho^{-1}\,dx$.
\end{lem}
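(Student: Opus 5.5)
The plan is to copy the argument of Section~\ref{rmk:slopeformula}, replacing $\s\rho^{\s+1}$ by the general pressure $P(\rho)$ and the explicit Riesz subdifferential computation by \cite[Th.~10.4.6]{AGS}. By the discussion following Lemma~\ref{lem:Gfinite}, the hypotheses of \cite[Sec.~10.4.7]{AGS} hold for $\Psi$. Hence $|\partial\mathcal F|(\mu)<\infty$ if and only if $P(\rho)\in W^{1,1}_{\mathrm{loc}}$ and $\partial_xP(\rho)=\rho w$ for some $w\in L^2(\rho)$, in which case $|\partial\mathcal F|(\mu)=\|w\|_{L^2(\rho)}$.

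\emph{Lower bound.} Fix $\psi\in C^1_c(\R)$ and set $\mu_\theta:=(\mathrm{id}+\theta\psi)_\#\mu$, so that $W_2(\mu,\mu_\theta)=|\theta|\,\|\psi\|_{L^2(\mu)}$. By \eqref{FVquantile}, $\mathcal F(\mu_\theta)=\int_0^1G\big(Q'(1+\theta\psi'(Q))\big)\,du$. We differentiate at $\theta=0$, using the $C^1$ regularity and convexity of $G$ from Lemma~\ref{lem:Gfinite}\ref{it:G}. To dominate the difference quotients, note that $1+\theta\psi'\ge\frac12$ for $|\theta|$ small, so convexity bounds them by $|G'(Q'/2)|\,Q'\|\psi'\|_\infty$. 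This quantity is integrable by \eqref{Gprime}, which gives $|G'(v)|v\le 2G(v/2)$, combined with the doubling bound \eqref{doubling} and $\mathcal F(\mu)<\infty$; this is the one place where \ref{V3} is needed. The derivative is
\[
\int_0^1G'(Q')Q'\psi'(Q)\,du=-\int_0^1P\big(\rho(Q)\big)\rho(Q)^{-1}\psi'(Q)\,du=-\int_\R\psi'P(\rho)\,dx,
\]
using $P(\rho)=-G'(1/\rho)$ and $Q'=1/\rho(Q)$. The definition of the slope along $\theta\downarrow0$ then gives $|\partial\mathcal F|(\mu)\ge\int\psi'P(\rho)/\|\psi\|_{L^2(\mu)}$, so the slope is at least the right-hand side $S$ of \eqref{Vslopedual}.

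\emph{Upper bound.} Assume $S<\infty$. The functional $\ell(\psi)=\int\psi'P(\rho)$ satisfies $|\ell(\psi)|\le S\|\psi\|_{L^2(\mu)}$; the $\varepsilon$-trick of Section~\ref{rmk:slopeformula} handles $\psi$ with $\|\psi\|_{L^2(\mu)}=0$. The functional therefore extends to $L^2(\mu)$, and Riesz representation gives $v$ with $\|v\|_{L^2(\mu)}=S$ and $\partial_xP(\rho)=-v\rho$ distributionally. By Cauchy--Schwarz $v\rho\in L^1$. Moreover $P(\rho)\in L^1$, since $P(\rho)\le\Psi(2\rho)\le C\Psi(\rho)+2G(\frac12)\rho$ by \eqref{PPsi}--\eqref{doubling}. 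Hence $P(\rho)\in W^{1,1}(\R)$, and \cite[Th.~10.4.6]{AGS} gives $|\partial\mathcal F|(\mu)=\|v\|_{L^2(\mu)}=S$.

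\emph{Explicit formula.} With $w=-v$, we have $|w|^2\rho=|\partial_xP(\rho)|^2\rho^{-1}$ on $\{\rho>0\}$. On $\{\rho=0\}$, $\partial_xP(\rho)=-v\rho=0$ a.e., so those points carry no contribution to the integral. This gives $|\partial\mathcal F|^2(\mu)=\int_{\{\rho>0\}}|\partial_xP(\rho)|^2\rho^{-1}\,dx$.

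The main obstacle is justifying the differentiation under the integral in the lower bound for a general $G$, which is where integrability must come from doubling rather than from homogeneity. A secondary check is that the hypotheses of \cite[Th.~10.4.6]{AGS} are satisfied; I would cite the verification already made after Lemma~\ref{lem:Gfinite}.
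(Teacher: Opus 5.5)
Your proof is correct and follows the paper's argument essentially step by step. It uses the same quantile differentiation, with a dominating function $|G'(Q'/2)|\,Q'\,\|\psi'\|_\infty$ that \eqref{Gprime} bounds by the paper's $4\|\psi'\|_\infty G(Q'/4)$; the same Riesz-representation converse with $P(\rho)\in L^1$ from \eqref{PPsi}--\eqref{doubling}; and the same appeal to \cite[Th.~10.4.6]{AGS}. One small inaccuracy: the differentiation step is not the only place \ref{V3} is needed, since your upper bound also uses \eqref{doubling} to get $P(\rho)\in L^1$, and \ref{V3} underlies the doubling hypothesis of \cite[Sec.~10.4.7]{AGS}.
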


\begin{proof}
The proof is that of \eqref{slopedual} in Section~\ref{rmk:slopeformula}, with $|x|^{-\s}$ replaced by $G$:
by \eqref{FVquantile} the energy is $\int_0^1G(Q')\,du$ with $G$ convex, and the directional derivative \eqref{directional} becomes
\[
-\frac{d}{d\theta}\Big|_{\theta=0}\int_0^1G\Big(Q'\big(1+\theta\psi'(Q)\big)\Big)\,du
=-\int_0^1G'(Q')\,Q'\,\psi'(Q)\,du=\int_\R\psi'\,P(\rho)\,dx ,
\]
since $-G'(1/\rho)=P(\rho)$ by Lemma~\ref{lem:Gfinite}\ref{it:Psi}. The differentiation under the integral sign is justified since, by \eqref{Gprime}, for $|\theta|\,\|\psi'\|_\infty\le\frac12$ the derivative of the integrand is bounded by $4\|\psi'\|_\infty G(Q'/4)$, which is integrable by \eqref{doubling}, as $\int_0^1G(Q'/4)\,du=\frac14\int_\R\Psi(4\rho)\,dx$. As in Section~\ref{rmk:slopeformula}, this shows that $|\partial\mathcal F|(\mu)$ is at least the right-hand side of \eqref{Vslopedual}. Conversely, if the latter is finite, the Riesz representation argument of Section~\ref{rmk:slopeformula}, which uses that $P(\rho)$ is integrable, as it is by \eqref{PPsi} and \eqref{doubling}, gives $P(\rho)\in W^{1,1}(\R)$ with $\partial_xP(\rho)=\rho w$ and $\|w\|_{L^2(\mu)}$ equal to the right-hand side of \eqref{Vslopedual}. Finally \cite[Th.~10.4.6]{AGS}, whose conditions on $\Psi$ were checked above, gives $|\partial\mathcal F|(\mu)=\|w\|_{L^2(\mu)}$, and $\|w\|_{L^2(\mu)}^2=\int_{\{\rho>0\}}|\partial_xP(\rho)|^2\rho^{-1}\,dx$.
\end{proof}

\subsection{Steps of the proof revisited}\label{sec:Vverbatim}

\subsubsection*{Order preservation and global existence}
As in Lemma~\ref{lem:globalexistence}, we check that \eqref{eqn:main-1st} has a unique global solution which remains in $\mathcal C_N$. Since $V'$ is merely continuous, the Cauchy--Lipschitz theorem is replaced by the Peano theorem for local existence, while uniqueness follows from the convexity of $F_N$ on $\mathcal C_N$: for two solutions \[\frac{d}{dt}\|\mathbf{x}_N-\mathbf{y}_N\|_{X_N}^2=-2\langle\nab_{X_N}F_N(\mathbf{x}_N)-\nab_{X_N}F_N(\mathbf{y}_N),\mathbf{x}_N-\mathbf{y}_N\rangle_{X_N}\le0.\] It remains to show that the particles can neither collide nor escape to infinity in finite time. When $V(0^+)=+\infty$, collisions are excluded as in Lemma~\ref{lem:globalexistence}:
since $F_N$ is nonincreasing along the flow, each term of \eqref{FNV} is bounded by
$2NF_N(\mathbf{x}_N(0))$, which bounds the gaps from below. This
argument does not work when $V$ is bounded near the origin, for instance $V(z)=e^{-|z|}$. The following lemma covers
all situations. Throughout, $\delta_k:=x_{k+1}-x_k$ denotes
the gaps.

\begin{lem}[Order preservation]\label{lem:Vgap}
Under \ref{V1}, along \eqref{eqn:main-1st} the minimal gap $\min_k\delta_k(t)$ is
nondecreasing as long as the solution exists.
\end{lem}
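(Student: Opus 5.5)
We compute the time derivative of each gap directly from \eqref{eqn:main-1st} and use the convexity and monotonicity in \ref{V1} to show that a gap achieving the minimum cannot decrease. Writing $\dot x_k=-N\sum_{j\ne k}V'(N(x_k-x_j))$ and using that $V'$ is odd (as $V$ is even), we obtain for $\delta_k=x_{k+1}-x_k$:
\[
\dot\delta_k=N\sum_{j\ne k}V'\big(N(x_k-x_j)\big)-N\sum_{j\ne k+1}V'\big(N(x_{k+1}-x_j)\big).
\]
The two self-interaction terms, $j=k+1$ in the first sum and $j=k$ in the second, give $NV'(-N\delta_k)-NV'(N\delta_k)=-2NV'(N\delta_k)\ge0$, since $V'\le0$ on $(0,\infty)$. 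It remains to control the other terms, which we pair according to their index offset.

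For $j>k+1$, we pair the term of index $j$ in the first sum with the same index in the second sum. Since $x_j-x_k>x_j-x_{k+1}>0$ and $V'$ is nondecreasing on $(0,\infty)$ by convexity, the contribution $-NV'(N(x_j-x_k))+NV'(N(x_j-x_{k+1}))$ is $\le0$. This has the wrong sign, so pairing by the same index cannot work. Instead we pair by index shift: the term $j=k+1+r$ seen from particle $k+1$ with the term $j=k+r$ seen from particle $k$, $r\ge1$. Their distances are $x_{k+1+r}-x_{k+1}$ and $x_{k+r}-x_k$. At a time $t$ where $\delta_k=\min_i\delta_i$, these differ by $\delta_{k+r}-\delta_k\ge0$, so that $x_{k+1+r}-x_{k+1}\ge x_{k+r}-x_k$. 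The contribution $-NV'(N(x_{k+r}-x_k))+NV'(N(x_{k+1+r}-x_{k+1}))$ is then $\ge0$ by monotonicity of $V'$. Symmetrically, on the left we pair $j=k-r$ for particle $k$ with $j=k+1-r$ for particle $k+1$, using $\delta_{k-r}\ge\delta_k$.

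Terms left unpaired at the boundary indices (near $1$ or $N$) have the favourable sign: on the right, particle $k$ has one more right partner than is paired, namely $j=N$ at shift $r=N-k$; its term $-NV'(\cdot)\ge0$. The same holds on the left for particle $k+1$ with $j=1$. Hence $\dot\delta_k\ge0$ whenever $\delta_k$ is a minimal gap.

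To conclude, we use that $m(t)=\min_k\delta_k(t)$ is a minimum of finitely many $C^1$ functions. At any time, its upper right Dini derivative is at least $\min\{\dot\delta_k:\delta_k=m\}\ge0$. A standard Dini-derivative lemma then gives that $m$ is nondecreasing. Alternatively, one can argue by contradiction: if $m(t_1)<m(t_0)$, consider the last time $t$ in $[t_0,t_1]$ at which $m(t)=m(t_0)-\varepsilon$ with a strict perturbation $m-\varepsilon(t-t_0)$.

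The main obstacle is the pairing step: choosing the shift pairing rather than the naive same-index pairing, and checking carefully that the boundary leftovers come with the good sign. Note also that the argument only requires order and existence of the solution, not strict convexity, consistently with using \ref{V1} alone.
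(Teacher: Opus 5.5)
Your approach is essentially the paper's, with one bookkeeping error to fix. The paper pairs the effects of each lateral particle on $x_k$ and on $x_{k+1}$ by the same index, and notes that these lateral contributions are nonpositive. It then uses $D_{i+1}\ge D_i+\delta_k$ (i.e.\ $\delta_{k-i-1}\ge\delta_k$) to telescope each side down to at most $N|V'(N\delta_k)|$, which half of the mutual term absorbs. Your shift pairing is the same computation after summation by parts, resting on the same inequalities $\delta_{k\pm r}\ge\delta_k$. Your Dini-derivative conclusion matches the paper's.

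The error is that you count the mutual term twice. For $r=1$, your partner ``$j=k+r$ seen from particle $k$'' is $j=k+1$. On the left, ``$j=k+1-r$ seen from particle $k+1$'' is $j=k$. These are exactly the two halves of the mutual term, which you had already set aside as the separate nonnegative contribution $-2NV'(N\delta_k)$.

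The implicit claim that the ``other terms'' are nonnegative is also false. As you observe yourself, each same-index bracket $N\bigl[V'(N(x_j-x_{k+1}))-V'(N(x_j-x_k))\bigr]$, $j\ge k+2$ (and its mirror on the left), is $\le 0$. So the non-mutual terms alone are nonpositive, and they cannot be controlled without the mutual term.

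The fix is to drop the separate treatment of the mutual term. Apply the shift pairing to all terms of $\dot\delta_k$, splitting the mutual term one half per side so that it is consumed by the $r=1$ pairs. This partitions $\dot\delta_k$ exactly into nonnegative pairs plus the two nonnegative leftovers $-NV'(N(x_N-x_k))$ and $-NV'(N(x_{k+1}-x_1))$, which gives $\dot\delta_k\ge 0$.
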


\begin{proof}
Let $k$ be an index realizing the minimal gap. In $\dot\delta_k$, the mutual term of the
pair $(x_k,x_{k+1})$ is $-2NV'(N\delta_k)\ge0$, and the particles on the left contribute
\[
-N\sum_{i\ge1}\Big[V'\big(N(D_i+\delta_k)\big)-V'(ND_i)\Big],\qquad D_i:=x_k-x_{k-i},
\]
and symmetrically on the right; each bracket is nonnegative since $V'$ is nondecreasing, so
that the lateral particles can push $\delta_k$ down. Now $D_{i+1}=D_i+\delta_{k-i-1}\ge
D_i+\delta_k$ by minimality, whence $V'(N(D_i+\delta_k))\le V'(ND_{i+1})$ and the sum
telescopes:
\[
\sum_{i\ge1}\Big[V'\big(N(D_i+\delta_k)\big)-V'(ND_i)\Big]\ \le\
V'\big(N(x_{k+1}-x_1)\big)-V'(ND_1)\ \le\ -V'(N\delta_{k-1})\ \le\ -V'(N\delta_k),
\]
using $V'\le0$, $D_1=\delta_{k-1}$, and $\delta_{k-1}\ge\delta_k$ with $V'$ nondecreasing.
The contribution of each side is therefore at most $N|V'(N\delta_k)|$ in absolute value, against
$2N|V'(N\delta_k)|$ for the mutual term, whence $\dot\delta_k\ge0$. Since $t\mapsto\min_k
\delta_k(t)$ is a minimum of finitely many $C^1$ functions, its right derivative is the
minimum of the $\dot\delta_k$ over the indices realizing the minimum, hence nonnegative, and
the function is nondecreasing.
\end{proof}

Consequently the solution is global and stays in $\mathcal C_N$. Indeed, on its maximal interval of existence $[0,T_{\max})$, the gaps are bounded below by $\min_k\delta_k(0)>0$. Moreover, since $\frac{d}{dt}F_N(\mathbf{x}_N)=-\|\dot{\mathbf{x}}_N\|_{X_N}^2$, the Cauchy--Schwarz inequality gives
\[
\frac{d}{dt}\Big(\frac1N\sum_{i=1}^Nx_i^2\Big)^{\hal}\ \le\ \|\dot{\mathbf{x}}_N\|_{X_N},\qquad\int_0^t\|\dot{\mathbf{x}}_N\|_{X_N}^2\,d\sigma\ \le\ F_N\big(\mathbf{x}_N(0)\big),
\]
so that the particles stay in a bounded set on bounded time intervals. If $T_{\max}$ were finite, the solution would therefore remain in a compact subset of $\mathcal C_N$ up to time $T_{\max}$, which is impossible for a maximal solution.

\subsubsection*{Decomposition into scales}
Splitting the sum \eqref{FNV} according to the index gap $k=|i-j|$, and writing $Q_N$ for the quantile function of $\mathbf{i}_N(\mathbf{x}_N)$, we have
\begin{align}\label{aNk}
&F_N(\mathbf{x}_N)=\sum_{k=1}^{N-1}a_N(k),\\
\nonumber & a_N(k):=\frac1N\sum_{i=1}^{N-k}V\big(N(x_{i+k}-x_i)\big)
=\int_0^{1-k/N}V\Big(N\big(Q_N(r+\tfrac kN)-Q_N(r)\big)\Big)\,dr ,
\end{align}
so that $a_N(k)$ is the energy carried by the pairs at index gap $k$. This decomposition is
used repeatedly below: in Lemma \ref{lem:virialbound}, in the $\Gamma$-liminf, and in
the proof of Lemma \ref{lem:virial}.

\subsubsection*{A priori estimates}
The energy--dissipation identity \eqref{EDIN} and the bound \eqref{apriori} are unchanged.
The second-moment identity, however, involves the virial rather than the energy:
symmetrizing,
\be\label{secondmomentV}
\frac{d}{dt}\ \frac1N\sum_{i=1}^Nx_i^2
=\frac2N\sum_ix_i\dot x_i
=-2\sum_{i\ne j}x_iV'\big(N(x_i-x_j)\big)
=\frac1N\sum_{i\ne j}\tilde V\big(N(x_i-x_j)\big)
=2\,\tilde F_N(\mathbf{x}_N).
\ee
The virial is dominated by the energy, as follows.

\begin{lem}[Domination of the virial]\label{lem:virialbound}
Under \ref{V1}--\ref{V3} there is $C=C(V)$ such that $\tilde F_N\le C\,(1+F_N)$ on $\mathcal
C_N$, uniformly in $N$.
\end{lem}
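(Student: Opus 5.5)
We need to prove $\tilde F_N\le C(1+F_N)$ on ordered configurations. The plan is to use the pointwise bound \eqref{tildeVbound}, $\tilde V(z)\le 2V(z/2)$, and then compare the energy at halved distances with the energy itself, scale by scale, through the decomposition \eqref{aNk}.

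\emph{Step 1: reduction to a halved energy.} By \eqref{tildeVbound},
\[
\tilde F_N(\mathbf{x}_N)\le \frac1N\sum_{i\ne j}V\big(\tfrac N2(x_i-x_j)\big)=2\sum_{k=1}^{N-1}b_N(k),\qquad b_N(k):=\frac1N\sum_{i=1}^{N-k}V\big(\tfrac N2(x_{i+k}-x_i)\big).
\]
It then suffices to bound $\sum_k b_N(k)$ by $C(1+\sum_k a_N(k))$.

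\emph{Step 2: splitting pairs by distance.} For a pair at index gap $k$, let $d:=N(x_{i+k}-x_i)$.
\begin{itemize}
\item If $d\le 2$, the doubling assumption \ref{V3} applies with $z=d/2\cdot 2\le 2$. More precisely, apply \ref{V3} at $z=d\le1$ directly, giving $V(d/2)\le C_VV(d)$. For $1<d\le2$, use monotonicity: $V(d/2)\le V(1/2)\le C_V V(1)$, a constant.
\item If $d>2$, we have $d/2>1$. Here the idea is the gap-splitting used in \eqref{Gbound} and in Lemma~\ref{lem:Gfinite}\ref{it:doub}.
\end{itemize}

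\emph{Step 3: comparing $b_N(k)$ with $a_N$ at smaller gaps.} The cleanest route is in terms of pairs. Since the points are ordered, $x_{i+k}-x_i\ge x_{i+\lceil k/2\rceil}-x_i$ and $x_{i+k}-x_i\ge x_{i+k}-x_{i+k-\lceil k/2\rceil}$. Moreover, the sum of these two sub-differences is at most $2(x_{i+k}-x_i)$, so one of them, $\Delta$, satisfies
\[
\tfrac12(x_{i+k}-x_i)\ge \Delta\quad\text{or}\quad \Delta\le x_{i+k}-x_i .
\]
This does not directly give the halving. Instead, I would use the telescoping argument from Lemma~\ref{lem:Gfinite}\ref{it:doub} at the discrete level. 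Write $k=2l$ or $k=2l+1$. For even $k$, the pair $(i,i+2l)$ has $\tfrac12(x_{i+2l}-x_i)\ge \min(x_{i+l}-x_i,\ x_{i+2l}-x_{i+l})$. Since $V$ is nonincreasing,
\[
V\big(\tfrac N2(x_{i+2l}-x_i)\big)\le V\big(N(x_{i+l}-x_i)\big)+V\big(N(x_{i+2l}-x_{i+l})\big),
\]
and summing over $i$ gives $b_N(2l)\le 2a_N(l)$. For odd $k=2l+1$ with $l\ge1$, the same argument with the split $l,l+1$ gives $b_N(2l+1)\le a_N(l)+a_N(l+1)$, using $x_{i+2l+1}-x_i\ge$ the relevant pieces. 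The only problematic term is $k=1$, i.e.\ the nearest-neighbour pairs, where no such split exists.

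\emph{Step 4: the nearest-neighbour term, the main obstacle.} Split $b_N(1)=\frac1N\sum_i V(N\delta_i/2)$ according to the size of $N\delta_i$:
\begin{itemize}
\item gaps with $N\delta_i\le1$ give at most $C_V a_N(1)$ by \ref{V3};
\item gaps with $N\delta_i>1$ give at most $\frac{N-1}{N}V(1/2)\le V(1/2)$.
\end{itemize}
Hence $b_N(1)\le C_Va_N(1)+V(1/2)$.

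Summing over $k$, each $a_N(l)$ is used at most a bounded number of times (namely for $k\in\{2l-1,2l,2l+1\}$). This gives
\[
\tilde F_N\le 2\Big(C_V a_N(1)+V(\tfrac12)+4\sum_l a_N(l)\Big)\le C(1+F_N),
\]
uniformly in $N$. The delicate point is making sure that the additive constant arises only from $k=1$, so that it does not grow with $N$. This is guaranteed because each $k\ge2$ is bounded purely multiplicatively by the $a_N(l)$.
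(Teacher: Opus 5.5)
Your proof is correct and is essentially the paper's argument: you use $\tilde V(z)\le 2V(z/2)$, split each index gap $k\ge2$ into $\lfloor k/2\rfloor$ and $\lceil k/2\rceil$ with $V(\min(A,B))\le V(A)+V(B)$, giving each $a_N(l)$ multiplicity at most $4$, and you invoke \ref{V3} only for nearest neighbours, which produces the single additive constant $V(\tfrac12)$. The exploratory Step 2 and the false start at the beginning of Step 3 play no role in the final argument and can be deleted.
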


\begin{proof}
Decomposing $\tilde F_N$ into scales as in \eqref{aNk} and using \eqref{tildeVbound}, we get
\[
\tilde F_N(\mathbf{x}_N)\ \le\ 2\sum_{k=1}^{N-1}b_N(k),\qquad b_N(k):=\frac1N\sum_{i=1}^{N-k}V\Big(\tfrac12N(x_{i+k}-x_i)\Big),
\]
and it suffices to bound the $b_N(k)$ in terms of the $a_N(k)$.

Let $k\ge2$ and set $k':=\lfloor k/2\rfloor$, $k'':=\lceil k/2\rceil$, so that the pair
$(i,i+k)$ splits into $(i,i+k')$ and $(i+k',i+k)$. Writing $A:=N(x_{i+k'}-x_i)$ and
$B:=N(x_{i+k}-x_{i+k'})$, we have $N(x_{i+k}-x_i)=A+B$ and $\frac{A+B}2\ge\min(A,B)$, so
that, $V$ being nonnegative and nonincreasing,
\[
V\Big(\tfrac12N(x_{i+k}-x_i)\Big)\ \le\ V\big(\min(A,B)\big)\ \le\ V(A)+V(B).
\]
Summing over $i$ and dividing by $N$ gives $b_N(k)$$\le a_N(k')+a_N(k'')$. Now each index $k'\ge1$ arises as $\lfloor k/2\rfloor$ for at most two
values of $k$, and likewise as $\lceil k/2\rceil$, so that summing over $k\ge2$ yields
$\sum_{k\ge2}b_N(k)\le4\sum_{k'\ge1}a_N(k')=4\,F_N$.
For $k=1$ we use \ref{V3}: if $N\delta_i\le1$ then $V(N\delta_i/2)\le C_VV(N\delta_i)$, and
otherwise $V(N\delta_i/2)\le V(\tfrac12)$, since $V$ is nonincreasing; hence
$b_N(1)$$\le C_V\,a_N(1)+V(\tfrac12)$. Altogether,
\[
\tilde F_N\ \le\ 2\big(4F_N+C_V\,a_N(1)+V(\tfrac12)\big)\ \le\ C\,(1+F_N),
\]
since $a_N(1)\le F_N$.
\end{proof}

Consequently, by \eqref{secondmomentV}, \eqref{apriori} and the convergence of the initial second moments given by \textup{(H1)}, \eqref{K2} holds, and \eqref{holder} is unchanged.

\subsubsection*{$\Gamma$-liminf}
Suppose that $\mathbf{i}_N(\mathbf{x}_N)\rightharpoonup\mu$ narrowly, and fix $k\ge1$. For a bounded interval $I$ with $\mu(\partial I)=0$, consider the pairs $(x_i,x_{i+k})$ lying in $I$, which contribute to $a_N(k)$ in \eqref{aNk}. The argument of Step~2 of the proof of Proposition~\ref{prop:gammaliminf}, applied at the fixed index gap $k$ and with Jensen's inequality for the convex nonincreasing function $V$ in place of $z\mapsto z^{-\s}$, bounds their contribution from below by a quantity tending to $|I|\,g_k\big(\mu(I)/|I|\big)$, where $g_k(r):=r\,V(k/r)$ for $r>0$ and $g_k(0):=0$. Discarding the pairs that do not lie in a single interval, as in Step~4 of that proof, we obtain, for every finite family $\Pi$ of disjoint bounded intervals $I$ with $\mu(\partial I)=0$,
\be\label{windowV}
\liminf_{N\to\infty}a_N(k)\ \ge\ \sum_{I\in\Pi}|I|\,g_k\big(\mu(I)/|I|\big).
\ee

If $\mu=\rho\,dx$, take for $\Pi$ the partition $\Pi_n$ of $[-n,n]$ into intervals of length $2^{-n}$. As $n\to\infty$, the averages $\sum_{I\in\Pi_n}\frac{\mu(I)}{|I|}\mathbf 1_I$ converge to $\rho$ a.e., by the Lebesgue differentiation theorem. Moreover, $g_k$ is continuous on $[0,\infty)$, since $V(z)\to0$ as $z\to\infty$. Fatou's lemma therefore gives
\be\label{aNkliminf}
\liminf_{N\to\infty}a_N(k)\ \ge\ \int_\R g_k(\rho)\,dx\ =\ \int_0^1V\big(kQ'(u)\big)\,du ,
\ee
the equality following from the change of variables $x=Q(u)$, as in \eqref{FVquantile}. Fatou's lemma in the
variable $k$ then yields
\[
\liminf_{N\to\infty}F_N(\mathbf{x}_N)\ \ge\ \int_0^1\sum_{k\ge1}V\big(kQ'\big)\,du
=\int_0^1G(Q')\,du=\mathcal F(\mu)
\]
by \eqref{FVquantile}.

If $\mu$ is not absolutely continuous, then $\mathcal F(\mu)=+\infty$, and we must show that $F_N(\mathbf{x}_N)\to+\infty$. Since $\sum_{k\ge1}g_k=\Psi$, summing \eqref{windowV} over $k\le K$ and letting $K\to\infty$ gives $\liminf_NF_N(\mathbf{x}_N)\ge\sum_{I\in\Pi}|I|\,\Psi\big(\mu(I)/|I|\big)$ for every such $\Pi$. As in Step~6 of the proof of Proposition~\ref{prop:gammaliminf}, there is a Lebesgue-null set $S$ with $m:=\mu(S)>0$, and for each $\ep>0$ there are finitely many such intervals with total length $L<\ep$ and total mass $M>m/2$. By convexity of $\Psi$, and since $\Psi(r)=r\,G(1/r)$ with $G$ nonincreasing,
\[
\sum_{I\in\Pi}|I|\,\Psi\Big(\frac{\mu(I)}{|I|}\Big)\ \ge\ L\,\Psi\Big(\frac ML\Big)
=M\,G\Big(\frac LM\Big)\ \ge\ \frac m2\,G\Big(\frac{2\ep}m\Big),
\]
which tends to $+\infty$ as $\ep\to0$ by \eqref{Gblowup}. Note that no weight $\psi$ appears
here; the weighted version of the $\Gamma$-liminf, which was used in the Riesz case to prove
the slope inequality, is replaced by Lemma \ref{lem:virial}.

\subsubsection*{Recovery sequence}
For $\mathbf{y}_N$ as in \eqref{defyN}, the two Jensen inequalities in the proof of
Proposition~\ref{prop:recovery}, applied again to $z\mapsto V(kz)$, give
$\frac1N\sum_{i\le N-k}V\big(N(y_{i+k}-y_i)\big)\le\int_0^1V(kQ')\,du$ for every $k$,
hence, summing over $k\le N-1$,
\be\label{recoveryV}
F_N(\mathbf{y}_N)\ \le\ \sum_{k=1}^{N-1}\int_0^1V\big(kQ'\big)\,du\ \le\
\int_0^1G(Q')\,du\ =\ \mathcal F(\mu),
\ee
exactly and for every $N$. Here no normalization constant is needed at all, the summability
over $k$ being carried by \ref{V2} rather than by a factor $Z_N$.

\subsubsection*{No excess energy}
$F_N$ is convex on $\mathcal C_N$ because $V$ is convex, so Proposition \ref{prop:noexcess}
holds verbatim, with \eqref{recoveryV} in place of \eqref{recovery}. In the truncated
version, Proposition \ref{prop:noexcessR}, the three energy counts of its Step 3 become,
with $m\le NK_2/R^2$ the number of far particles:
\begin{align}
\text{bulk--bulk:}\quad &\frac{1}{2N}\sum_{\text{bulk}}V\big(N(z_i-z_j)\big)\ \le\
\frac{N_b}{N}\cdot\frac{1}{2N_b}\sum_{\text{bulk}}V\big(N_b(z_i-z_j)\big)\ \le\
\mathcal F(\mu^R),\label{VBB}\\
\text{far--far:}\quad &\frac{1}{2N}\ m\cdot2\sum_{j=1}^{m}V(j)\ \le\ \frac{m\,G(1)}{N}\
\le\ \frac{G(1)K_2}{R^2},\label{VFF}\\
\text{bulk--far:}\quad &\frac{1}{2N}\cdot 2mN\,V(N)\ =\ m\,V(N)\ \le\ N
V(N)\,\frac{K_2}{R^2},\label{VBF}
\end{align}
where \eqref{VBB} uses $N\ge N_b$ and the monotonicity of $V$, then \eqref{recoveryV} for
$\mu^R$ with $N_b$ points; \eqref{VFF} uses that the far particles have been spread, the $j$-th neighbour of each being at distance at least $j/N$; \eqref{VBF} uses that they lie at distance at least $1$
from the bulk; and $NV(N)\to0$ by \ref{V2}.

The only genuinely new point in this adaptation is the comparison of $\mathcal F(\mu^R)$ with $\mathcal
F(\mu)$, for which the homogeneity of the Riesz kernel is no longer available, so that the
factor $\eta_R$ of \eqref{noexcessR} must be replaced. The measure $\mu^R$ has density $\lambda\rho\mathbf 1_{[-R,R]}$ with $\lambda:=\big(\int_{-R}^R\rho\,dx\big)^{-1}\in[1,2]$, since $\int_{-R}^R\rho\,dx\ge1-K_2/R^2\ge\hal$. Using successively the convexity of $\Psi$, the definition of $P$ with $\lambda\ge1$, the bound \eqref{PPsi} with the monotonicity of $\Psi$ and $\lambda\le2$, and \eqref{doubling} twice, we get
\[
\Psi(\lambda\rho)-\Psi(\rho)\le(\lambda-1)\rho\Psi'(\lambda\rho)\le(\lambda-1)\big(P(\lambda\rho)+\Psi(\lambda\rho)\big)\le2(\lambda-1)\Psi(4\rho)\le C(\lambda-1)\big(\Psi(\rho)+\rho\big).
\]
Since $\lambda-1\le2K_2/R^2$, integrating over $[-R,R]$ gives $\mathcal F(\mu^R)\le\mathcal F(\mu)+\frac{CK_2}{R^2}\big(\mathcal F(\mu)+1\big)$, uniformly over measures with $\int x^2d\mu\le K_2$. Thus \eqref{noexcessR} holds with $\eta_R\mathcal F(\mu)+\frac{2K_2}{R^2}$ replaced by $\frac{CK_2}{R^2}\big(\mathcal F(\mu)+1\big)+\big(G(1)+NV(N)\big)\frac{K_2}{R^2}$, where $\sup_NNV(N)<\infty$ by \ref{V2}; this is all that the proof of Corollary \ref{coro:noexcessR} uses.

\subsection{The new ingredient: the virial follows the energy}\label{sec:virial}
For $\psi\in C^1(\R)$ with bounded derivative, the variation
$\theta\mapsto\mathbf{x}_N+\theta\,\psi(\mathbf{x}_N)$ gives, by \eqref{FNV} and symmetrization,
\be\label{variationV}
\frac{d}{d\theta}\Big|_{\theta=0}F_N\big(\mathbf{x}_N+\theta\,\psi(\mathbf{x}_N)\big)
=\sum_{i\ne j}V'\big(N(x_i-x_j)\big)\psi(x_i)
=-\frac{1}{2N}\sum_{i\ne j}\frac{\psi(x_i)-\psi(x_j)}{x_i-x_j}\,\tilde V\big(N(x_i-x_j)\big),
\ee
whose formal limit is $-\int_\R\psi'P(\rho)\,dx$, the first variation of $\mathcal F$ along
the transport map $\id+\theta\psi$.  In general $\tilde V$ is not
convex, so Jensen's inequality cannot be applied to it directly, and the weighted $\Gamma$-liminf used in the Riesz case to bound this quantity from below has no analogue here. Instead, we show that it converges.

\begin{lem}[Convergence of the energy forces convergence of the virial]\label{lem:virial}
Assume \ref{V1}--\ref{V4}. Let $\mathbf{x}_N\in\mathcal C_N$ be such that
$\mathbf{i}_N(\mathbf{x}_N)\rightharpoonup\mu=\rho\,dx$ narrowly and
$F_N(\mathbf{x}_N)\to\mathcal F(\mu)<\infty$. Then for every $\psi\in C^1(\R)$ with bounded
derivative,
\be\label{virialconv}
\lim_{N\to\infty}\frac{1}{2N}\sum_{i\ne j}\frac{\psi(x_i)-\psi(x_j)}{x_i-x_j}\,\tilde V\big(N(x_i-x_j)\big)
=\int_\R\psi'\,P(\rho)\,dx .
\ee
In particular, with $\psi=\id$, $\tilde F_N(\mathbf{x}_N)\to\int_\R P(\rho)\,dx$.
\end{lem}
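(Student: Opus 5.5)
The plan is to work scale by scale, using the decomposition \eqref{aNk} and the fact that the lower bound \eqref{aNkliminf} holds for each $k$ separately. Since $F_N(\mathbf{x}_N)=\sum_k a_N(k)\to\mathcal F(\mu)=\sum_k\int_0^1V(kQ')\,du$ and $\liminf_N a_N(k)\ge\int_0^1V(kQ')\,du$ for every $k$, a standard Fatou-type argument gives $a_N(k)\to\int_0^1V(kQ')\,du$ for every fixed $k$. It also gives the tail bound $\limsup_N\sum_{k>K}a_N(k)\le\sum_{k>K}\int_0^1V(kQ')\,du\to0$ as $K\to\infty$, and the same localized to windows. Concretely, the lower bound of the $\Gamma$-liminf holds on every interval, and there is equality in total. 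So each local, per-scale lower bound must be asymptotically sharp: for every $k$ and every bounded interval $I$ with $\mu(\partial I)=0$, the contribution of the pairs at gap $k$ inside $I$ converges to $\int_{Q^{-1}(I)}V(kQ')\,du$. The interactions across distant windows vanish in the limit.

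\textbf{Step 1: Strict convexity forces the rescaled gaps to equidistribute.} This is where \ref{V4} enters. Fix $k$ and consider the empirical distribution of the pairs $\big(Q_N(r),\,N(Q_N(r+k/N)-Q_N(r))\big)$, $r\in(0,1)$, i.e.\ position together with rescaled $k$-gap. I will show this converges to the graph measure of $u\mapsto(Q(u),kQ'(u))$. The argument is by Young measures. Let $\nu_x$ be a limit Young measure of the $k$-gaps at position $x$. The Jensen step gives the lower bound $\int V(\text{mean})$. The mean of the gap is forced by mass, since telescoping gives $k/\rho(x)$. Equality in the limit together with strict convexity forces $\nu_x=\delta_{k/\rho(x)}$. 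A subtlety is that the mean constraint is only an inequality, because mass of gaps may escape to $+\infty$. This is harmless here: $V$ is nonincreasing, so that would only increase the lower bound, and equality rules it out again. Escape of gaps to $0$ is controlled by the bound on $a_N(k)$ and by $V(0^+)$ being possibly infinite, via equi-integrability.

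\textbf{Step 2: Passing to the virial.} Once the $k$-gaps converge in measure to $kQ'$, the next task is to pass to the limit in
\[
\frac1N\sum_i\frac{\psi(x_{i+k})-\psi(x_i)}{x_{i+k}-x_i}\,\tilde V\big(N(x_{i+k}-x_i)\big).
\]
The difference quotient of $\psi$ is bounded by $\|\psi'\|_\infty$ and converges to $\psi'(Q)$ wherever the gaps vanish, which holds a.e.\ because the $k$-gaps are $O(1/N)$ in measure. The factor $\tilde V$ converges pointwise in measure by continuity of $\tilde V$. What is missing is uniform integrability of $\tilde V(N\cdot\text{gap})$, both near gap $0$ and summed over $k$. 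I expect this to be \textbf{the main obstacle}. The plan is to use \eqref{tildeVbound}, $\tilde V(z)\le2V(z/2)$, together with the halving trick of Lemma~\ref{lem:virialbound}. The gap-$k$ virial is dominated by $a_N(\lfloor k/2\rfloor)+a_N(\lceil k/2\rceil)$ for $k\ge2$, and by $C_V a_N(1)+V(\tfrac12)$-type terms for $k=1$, via \ref{V3}. Since $a_N(k)$ converges for each $k$ and the tails in $k$ are uniformly small, Vitali's theorem applies, provided the integrands $V(N\cdot\text{gap})$ are themselves equi-integrable. That follows from their convergence in $L^1$, which is Step 1 combined with $a_N(k)\to\int V(kQ')$ (Scheffé/Brezis--Lieb). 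Domination then transfers this equi-integrability to $\tilde V$. The far field, meaning pairs outside a window $[-R,R]$, is handled by tightness and the same domination.

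\textbf{Step 3: Summing over scales.} Summing the limits over $k$ gives
\[
\int_0^1\psi'(Q)\sum_k\tilde V(kQ')\,du=\int_0^1\psi'(Q)\,\frac{P(\rho(Q))}{\rho(Q)}\,du=\int_\R\psi'P(\rho)\,dx,
\]
using Lemma~\ref{lem:Gfinite}\ref{it:Psi} and the change of variables $x=Q(u)$. This yields \eqref{virialconv}.
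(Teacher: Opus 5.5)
Your proposal is correct and follows the paper's proof essentially step for step. Both arguments go through: per-scale saturation $a_N(k)\to\int_0^1V(kQ')\,du$; strict convexity forcing the rescaled $k$-gaps to converge in measure to $kQ'$; $L^1$ convergence of $V(kD^{(k)}_N)$, transferred to $\tilde V$ via \eqref{tildeVbound} and \ref{V3}, then Vitali; and the halving bound for the tails in $k$. The only real difference is the rigidity step. You obtain it from the equality case of Jensen's inequality for a Young measure, whose barycenter is only bounded above by $kQ'$ and where the equality case also uses that $V$ is strictly decreasing. The paper instead makes it quantitative with the tangent-line defect $B_k(D,q)$ measured against piecewise-constant mean slopes, combined with Chebyshev's inequality.
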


\begin{proof}[Sketch of proof]
We may write, as in \eqref{aNk}, the left-hand side of \eqref{virialconv} as
\[\sum_{k\ge1}\int_0^{1-k/N}\Phi^{(k)}_N(r)\,\tilde V\big(kD^{(k)}_N(r)\big)\,dr,\] where
$D^{(k)}_N(r):=\frac{N}{k}\big(Q_N(r+\tfrac kN)-Q_N(r)\big)$ is the discrete slope of the
quantile at scale $k$ and
\be\label{Phik}
\Phi^{(k)}_N(r):=\frac{\psi\big(Q_N(r+\tfrac kN)\big)-\psi\big(Q_N(r)\big)}
{Q_N(r+\tfrac kN)-Q_N(r)},\qquad r\in\big(0,1-\tfrac kN\big] .
\ee

\emph{Step 1: Every scale saturates.} By the $\Gamma$-liminf argument, that is, by \eqref{aNkliminf},
$\liminf_Na_N(k)\ge\int_0^1V(kQ')$ for every $k$, while $\sum_ka_N(k)=F_N\to\mathcal
F(\mu)=\sum_k\int_0^1V(kQ')$. Thus the terms $a_N(k)$ are nonnegative, each has liminf bounded below by the corresponding integral, and their sum converges to the finite sum of these integrals. Hence each term converges to its integral:
$a_N(k)\to\int_0^1V(kQ')$ for every $k$, and moreover $\lim_{K\to\infty}\sup_N\sum_{k>K}a_N(k)=0$.

\emph{Step 2: Strict convexity pins the slopes.} Fix $k$ and $\kappa\in(0,\frac14)$, and let $\Pi$ be a partition of $[\kappa,1-\kappa]$ into windows $J$ with
continuity endpoints of $Q$. Let $q_J$ be the mean slope of $Q$ on $J$, $q_\Pi:=\sum_{J\in\Pi}q_J\mathbf 1_J$, and
\[
B_k(D,q):=V(kD)-V(kq)-kV'(kq)(D-q)\ \ge0,
\]
which measures the gap between $V$ and its tangent at $kq$.

Since $q_J\ge\frac1{|J|}\int_JQ'$ and $V$ is nonincreasing, Jensen's inequality gives $\sum_J|J|V(kq_J)\le\int_{\cup J}V(kQ')$. Since moreover $q_\Pi\to Q'$ a.e.\ on $[\kappa,1-\kappa]$ as $\Pi$ is refined (difference quotients of the monotone function $Q$ over intervals containing a point of differentiability converge to the derivative there), Fatou's lemma shows that $\ep_\Pi:=\int_0^1V(kQ')-\sum_J|J|V(kq_J)\ge0$ tends to $\int_{(0,\kappa)\cup(1-\kappa,1)}V(kQ')$, hence to $0$ as $\Pi$ is refined and then $\kappa\to0$.

We now compare $D^{(k)}_N$ with $q_\Pi$:
\[
\int_{\cup J}B_k\big(D^{(k)}_N,q_\Pi\big)=\int_{\cup J}V\big(kD^{(k)}_N\big)-\sum_J|J|V(kq_J)-\sum_J|J|\,kV'(kq_J)\Big(\frac1{|J|}\int_JD^{(k)}_N-q_J\Big).
\]
The mean of $D^{(k)}_N$ over $J$ tends to $q_J$ by the overlap cancellation (the integral of $D^{(k)}_N$ over $J$ reduces to the difference of the averages of $Q_N$ over the intervals of length $\frac kN$ starting at the two endpoints of $J$, which are continuity points of $Q$), so the last sum, which has finitely many terms, tends to $0$, while the first integral is at most $a_N(k)$, which tends to $\int_0^1V(kQ')$ by Step 1. Hence $\limsup_N\int_{\cup J}B_k(D^{(k)}_N,q_\Pi)\le\ep_\Pi$.

By \ref{V4}, $B_k(\cdot,q)$ is convex and vanishes only at $D=q$, so that, for $0<\ep<a<b<\infty$, the quantity $c_{a,b}(\ep):=\inf\{B_k(D,q):\ q\in[a,b],\ D>0,\ |D-q|\ge\ep\}$ is positive: by convexity of $B_k(\cdot,q)$, it is the minimum over $q\in[a,b]$ of $B_k(q\pm\ep,q)$, which is continuous and positive in $q$. By Chebyshev's inequality,
\[
\limsup_{N\to\infty}\big|\{|D^{(k)}_N-Q'|\ge2\ep\}\big|\le\big|\{|q_\Pi-Q'|\ge\ep\}\big|+\big|\{q_\Pi\notin[a,b]\}\big|+2\kappa+\frac{\ep_\Pi}{c_{a,b}(\ep)},
\]
and refining $\Pi$, then letting $\kappa\to0$, and finally $a\downarrow0$ and $b\uparrow\infty$, shows, since $0<Q'<\infty$ a.e.\ and the left-hand side is nonincreasing in $\ep$ (so that the restriction $\ep<a$ is harmless), that $D^{(k)}_N\to Q'$ in
measure on $(0,1)$.

Consequently $V(kD_N^{(k)})\to V(kQ')$ in measure, and since the
integrals converge by Step 1, $V(kD^{(k)}_N)\to V(kQ')$ in $L^1(0,1)$: indeed, writing $f_N:=V(kD^{(k)}_N)$ and $f:=V(kQ')$, one has $\int|f_N-f|=2\int(f-f_N)_++\int(f_N-f)$, where $(f-f_N)_+\le f$ tends to $0$ in measure, so that the first term tends to $0$ by dominated convergence;
in particular the family $\{V(kD_N^{(k)})\}_N$ is uniformly integrable.

\emph{Step 3: From the energy kernel to the virial kernel.} By \eqref{tildeVbound},
$\tilde V(z)\le2V(z/2)$; for $z\le1$ this is at most $2C_VV(z)$ by \ref{V3}, and for $z\ge1$
it is at most $2V(\tfrac12)$. Hence $\tilde V(kD^{(k)}_N)$ is uniformly integrable, and it
converges in measure to $\tilde V(kQ')$ by Step 2 and continuity of $\tilde V$; so does
$\Phi^{(k)}_N\tilde V(kD_N^{(k)})$, since $\Phi^{(k)}_N$ is bounded by $\|\psi'\|_\infty$
and converges a.e.\ to $\psi'(Q)$, by the mean value theorem and the convergence of both $Q_N(r)$ and $Q_N(r+\frac kN)$ to $Q(r)$ at every continuity point $r$ of $Q$. By Vitali, for every fixed $k$,
\[
\int_0^{1-k/N}\Phi_N^{(k)}\,\tilde V\big(kD^{(k)}_N\big)\,dr\ \longrightarrow\
\int_0^1\psi'\big(Q(u)\big)\,\tilde V\big(kQ'(u)\big)\,du .
\]

\emph{Step 4: Tails in $k$.} By the splitting used in the proof of Lemma
\ref{lem:virialbound}, for $k\ge2$,
\[
\int\tilde V(kD^{(k)}_N)\le2\int V(kD_N^{(k)}/2)=2b_N(k)\le2\big(a_N(\lfloor
k/2\rfloor)+a_N(\lceil k/2\rceil)\big),
\]
so $\sup_N\sum_{k>K}\int|\Phi^{(k)}_N|
\tilde V(kD^{(k)}_N)\le8\|\psi'\|_\infty\sup_N\sum_{k'\ge K/2}a_N(k')\to0$ as $K\to\infty$
by Step 1, and similarly for the limiting sum. Summing Step 3 over $k\le K$ and letting
$K\to\infty$,
\[
\lim_N\frac{1}{2N}\sum_{i\ne j}\frac{\psi(x_i)-\psi(x_j)}{x_i-x_j} \tilde V\big(N(x_i-x_j)\big)
=\int_0^1\psi'(Q)\sum_{k\ge1}\tilde V(kQ')\,du
=\int_\R\psi'\,P(\rho)\,dx,
\]
where we used $\sum_k\tilde V(k/\rho)=P(\rho)/\rho$ at $\rho=\rho(Q(u))=1/Q'(u)$, and the
change of variables $x=Q(u)$.
\end{proof}

\subsection{Conclusion of the proof in the general case}\label{sec:Vconclusion}

\begin{prop}[Slope inequality, general potential]\label{prop:Vslope}
Assume \ref{V1}--\ref{V4} and let $\mathbf{x}_N\in\mathcal C_N$ be such that
$\mathbf{i}_N(\mathbf{x}_N)\rightharpoonup\mu=\rho\,dx$ narrowly and
$F_N(\mathbf{x}_N)\to\mathcal F(\mu)<\infty$. Then
\be\label{Vslopeineq}
\liminf_{N\to\infty}\|\nab_{X_N}F_N(\mathbf{x}_N)\|_{X_N}\ \ge\ |\partial\mathcal F|(\mu).
\ee
\end{prop}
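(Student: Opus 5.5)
The plan is to copy the proof of Proposition~\ref{prop:slope}, with Lemma~\ref{lem:virial} doing the job that the weighted $\Gamma$-liminf and the monotone reduction did in the Riesz case. Convergence of the virial gives a full limit, not just a liminf, so no decomposition $\psi=\hat\psi-\lambda\,\id$ is needed.

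First, if $\liminf_N\|\nab_{X_N}F_N(\mathbf{x}_N)\|_{X_N}=+\infty$ there is nothing to prove. Otherwise, extract a subsequence along which the norms converge to the liminf and stay bounded. Narrow convergence and convergence of the energies are both preserved by extraction, so the hypotheses of Lemma~\ref{lem:virial} still hold along the subsequence.

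Next, fix $\psi\in C^1_c(\R)$; it has bounded derivative. Consider the variation $\theta\mapsto\mathbf{x}_N+\theta\,\psi(\mathbf{x}_N)$. As in \eqref{cs2}, the definition of the $X_N$-gradient and the Cauchy--Schwarz inequality give
\[
-\frac{d}{d\theta}\Big|_{\theta=0}F_N\big(\mathbf{x}_N+\theta\,\psi(\mathbf{x}_N)\big)\le\|\nab_{X_N}F_N(\mathbf{x}_N)\|_{X_N}\,\|\psi(\mathbf{x}_N)\|_{X_N}.
\]
By \eqref{variationV}, the left-hand side is exactly the symmetrized virial expression appearing in \eqref{virialconv}. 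Lemma~\ref{lem:virial} then shows that it converges to $\int_\R\psi'P(\rho)\,dx$. Since $\psi^2$ is bounded and continuous, narrow convergence gives $\|\psi(\mathbf{x}_N)\|_{X_N}^2=\frac1N\sum_i\psi(x_i)^2\to\int_\R\psi^2\rho\,dx$. Whenever $\int\psi^2\rho>0$, we therefore obtain
\[
\liminf_{N\to\infty}\|\nab_{X_N}F_N(\mathbf{x}_N)\|_{X_N}\ \ge\ \frac{\int_\R\psi'P(\rho)\,dx}{\big(\int_\R\psi^2\rho\,dx\big)^{1/2}}.
\]

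Finally, take the supremum over all such $\psi$. By the dual formula of Lemma~\ref{lem:Vslopeformula}, this supremum equals $|\partial\mathcal F|(\mu)$. That lemma applies because $\mathcal F(\mu)<\infty$ and because \ref{V1}--\ref{V3} hold with $V\not\equiv0$, the latter being guaranteed by \ref{V4}. This yields \eqref{Vslopeineq}.

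All the real difficulty sits in Lemma~\ref{lem:virial}, which is taken as given. What remains to check carefully is bookkeeping. The sign and normalization in \eqref{variationV} must match \eqref{virialconv}, so that the virial limit reproduces the numerator of \eqref{Vslopedual}, constant included. The restriction to $\psi$ with $\int\psi^2\rho>0$ must be consistent with the convention in Lemma~\ref{lem:Vslopeformula}.
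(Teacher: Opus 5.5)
Your proposal is correct and follows the paper's proof essentially line by line: you extract a subsequence, use the variation $\mathbf{x}_N+\theta\,\psi(\mathbf{x}_N)$ with Cauchy--Schwarz, obtain the limit of the first variation from Lemma~\ref{lem:virial}, pass to the limit in $\|\psi(\mathbf{x}_N)\|_{X_N}$ by narrow convergence, and conclude with the dual formula of Lemma~\ref{lem:Vslopeformula}. Your sign check of \eqref{variationV} against \eqref{virialconv} and your observation that \ref{V4} supplies $V\not\equiv0$ are both correct.
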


\begin{proof}
As in the proof of Proposition \ref{prop:slope}, we may assume, after extracting a
subsequence realizing the liminf, that $\|\nab_{X_N}F_N(\mathbf{x}_N)\|_{X_N}$ is bounded;
otherwise there is nothing to prove.

Let $\psi\in C^1_c(\R)$ with $\int_\R\psi^2\rho\,dx>0$. By \eqref{variationV} and Lemma \ref{lem:virial},
\[
-\frac{d}{d\theta}\Big|_{\theta=0}F_N\big(\mathbf{x}_N+\theta\,\psi(\mathbf{x}_N)\big)\ \longrightarrow\
\int_\R\psi'\,P(\rho)\,dx .
\]
By \eqref{cs2}, the left-hand side is at most $\|\nab_{X_N}F_N(\mathbf{x}_N)\|_{X_N}\,\|\psi(\mathbf{x}_N)\|_{X_N}$, and $\|\psi(\mathbf{x}_N)\|_{X_N}^2=\frac1N\sum_i\psi(x_i)^2$ converges to $\int_\R\psi^2\rho\,dx$ by narrow convergence, since $\psi$ is bounded and continuous. Hence
\[
\liminf_{N\to\infty}\|\nab_{X_N}F_N(\mathbf{x}_N)\|_{X_N}\ \ge\
\frac{\int_\R\psi'\,P(\rho)\,dx}{\(\int_\R\psi^2\rho\,dx\)^{\hal}},
\]
and taking the supremum over $\psi$, Lemma~\ref{lem:Vslopeformula} gives \eqref{Vslopeineq}.
\end{proof}

\begin{proof}[Proof of Theorem \ref{thm:general}]
We follow the proof of Theorem~\ref{thm:main} in Section~\ref{sec:conclusion}. Steps~1--6 go through with the following substitutions.
\begin{itemize}
\item In Step~1, the pairwise distinct initial positions are relabelled so that $\mathbf{x}_N(0)\in\mathcal C_N$; Lemma~\ref{lem:globalexistence} is replaced by Lemma~\ref{lem:Vgap} and the argument following it; and the bound \eqref{K2} is obtained from \eqref{secondmomentV} and Lemma~\ref{lem:virialbound}.
\item In Step~2, Proposition~\ref{prop:gammaliminf} is replaced by the $\Gamma$-liminf of Section~\ref{sec:Vverbatim}.
\item In Step~3, Corollary~\ref{coro:noexcessR} rests on Proposition~\ref{prop:noexcessR} in the modified form of Section~\ref{sec:Vverbatim}, with \eqref{recoveryV} in place of \eqref{recovery}.
\item In Step~4, Proposition~\ref{prop:Vslope} replaces Proposition~\ref{prop:slope}.
\item In Step~5, the identification of Proposition~\ref{prop:AGSfacts}\ref{it:identification} is replaced by its version of Section~\ref{sec:Vlimit}: the limit is the $W_2$ gradient flow of $\mathcal F$ starting from $\rho_0$, which solves \eqref{PDEV}.
\end{itemize}
Only Step 7 requires a comment, since \eqref{secondmomentV} replaces
\eqref{secondmoment}: what has to be passed to the limit is the virial, not the energy. At
almost every $t$ we have narrow convergence together with convergence of the energies, so
Lemma \ref{lem:virial} applied with $\psi=\id$ gives
\[
\tilde F_N\big(\mathbf{x}_N(t)\big)\ \longrightarrow\ \int_\R P\big(\rho(t)\big)\,dx ,
\]
and Lemma \ref{lem:virialbound}, together with the bound \eqref{apriori} on the energies, allows
one to pass to the limit in $\int_0^t\tilde F_N\,d\sigma$ by dominated convergence. On the limiting side, the analogue of
\eqref{secondmomentPME} is
\be\label{secondmomentPMEV}
\int_\R x^2\rho(t)\,dx=\int_\R x^2\rho_0\,dx
+2\int_0^t\!\!\int_\R P\big(\rho(\sigma)\big)\,dx\,d\sigma ,
\ee
obtained by testing the weak formulation of \eqref{PDEV} against the same truncations of
$x^2$, which is licit since $P(\rho)\le\Psi(2\rho)\le C\Psi(\rho)+2G(\frac12)\rho$ by \eqref{PPsi} and \eqref{doubling}, so that $P(\rho)\in L^1((0,t)\times\R)$. The second moments therefore converge, and the convergence upgrades from $W_1$ to
$W_2$ as before.
\end{proof}

\bibliographystyle{alpha}
\bibliography{refs}
\enlargethispage{\baselineskip}

\end{document}